\documentclass[11pt]{article}  

\usepackage{hyperref}

\usepackage{stmaryrd}
\usepackage[tight]{minitoc}
\usepackage{enumitem}
\usepackage{url}
\usepackage{amsfonts}
\usepackage{amsthm}
\usepackage{amssymb,amsmath}
\usepackage{latexsym}
\usepackage{graphics}
\usepackage{epic}
\usepackage{epsfig}
\usepackage{psfrag}
\usepackage{pict2e}  
\usepackage{subfigure}
\usepackage{bbm}
\usepackage{dsfont}
\usepackage{color}
\usepackage{caption}
\usepackage{tikz}
\usepackage{calc}
\definecolor{gray5}{gray}{0.8}
\definecolor{gray1}{gray}{0.4}
\definecolor{gray2}{gray}{0.6}
\definecolor{gray3}{gray}{0.7}
\definecolor{gray4}{gray}{0.3}

\numberwithin{equation}{section}

\textwidth=6.6in 	
\hoffset=-0.8in 	
\textheight=8.6in 	
\voffset=-0.6in 		

\linespread{1.1}

\newtheorem     {thm}{Theorem}[section]

\newtheorem     {lem}[thm]{Lemma}
\newtheorem     {prop}[thm]{Proposition}
\newtheorem     {cor}[thm]{Corollary}

\newtheorem     {rem}[thm]{Remark}
\newtheorem     {assu}[thm]{Assumption}

\newcommand     {\mf}[1] {#1}
\renewcommand     {\b}[1] {#1}

\definecolor{darkred}{rgb}{0.9,0.1,0.1}

\setlength{\marginparwidth}{2cm}

\setcounter{tocdepth}{2}
 \newcounter{hypo}

\setlength{\marginparwidth}{2cm}

\def\cmb#1{\marginpar{\raggedright\tiny{\textcolor{blue}{#1}}}}

\setlength{\marginparwidth}{2cm}

\begin{document}

\vspace{-3cm}

\title{Markovian tricks for non-Markovian trees:  contour process \\ \Large \it Extinction and Scaling limits} 

\date{}
\author{\textsc{Bertrand Cloez$^{1}$}, \textsc{Benoit Henry$^{2,3}$}}
\footnotetext[1]{MISTEA, INRA, Montpellier SupAgro, Univ. Montpellier}
\footnotetext[2]{IECL, Universit\'e de Lorraine, Site de Nancy, B.P. 70239, F-54506 Vandoeuvre-l\`es-Nancy Cedex, France}
\footnotetext[3]{CNRS, IECL, UMR 7502, Vand{\oe}uvre-l\`es-Nancy, F-54506, France\\ E-mail:
 \texttt{Bertrand.cloez@inra.fr}, \texttt{benoit.henry@univ-lorraine.fr}}

\maketitle

\begin{abstract}
In this work, we study a family of non-Markovian trees modeling populations where individuals live and reproduce independently with possibly time-dependent birth-rate and lifetime distribution. To this end, we use the coding process introduced by Lambert. We show that, in our situation, this process is no longer a L\'{e}vy process but remains a Feller process and we give a complete characterization of its generator. This allows us to study the model through Markov processes techniques. On one hand, introducing a scale function for such processes allows us to get necessary and sufficient conditions for extinction or non-extinction and to characterize the law of such trees conditioned on these events. On the other hand, using Lyapounov drift techniques, we get  another set of, easily checkable, sufficient criteria for extinction or non-extinction and some tail estimates for the tree length. Finally, we also study scaling limits of these trees and observe that the Bessel tree appears naturally.

\end{abstract} 
\bigskip

\noindent {\it MSC 2000 subject classifications:} Primary 60J80; secondary 
92D25, 05C05.\\

\noindent \textit{Key words and phrases}: contour processes; branching processes; time-inhomogeneous splitting tree; Markov processes; scale function; Lyapunov function ; scaling limit.

\tableofcontents 
    
\section{Introduction}
\label{sec:intro}

In this paper we study non-Markovian trees modeling time-inhomogeneous populations where individuals live and reproduce independently but where birth-rate $b$ and lifetimes distributions $K$ are time-dependent. In addition, we assume that the population starts from a single individual called the \emph{root} or the \emph{ancestor} and that individuals only give birth to a single child at time.
In particular, as the time-homogeneous case is known has splitting trees, we simply call such tree \emph{inhomogeneous splitting trees} (IST). The homogeneous case has been introduced in \cite{GK} but one of the main steps in the study of these trees was done by Lambert \cite{Lambert2010}. The inhomogeneous case has also been studied recently by Lambert and Popovic \cite{LP13} and by Lambert and Stadler \cite{LS13}. In \cite{LS13}, the authors are interested in the so-called \emph{reconstructed phylogenetic tree} of this model (among various others) which summarizes de genealogical relations between the individuals. They question whether this reconstructed tree is a CPP  or not. A CPP is a kind of random tree where all the tips lay at a same distance from the root and where the coalescent times of the tips are independent random quantities. In particular, they obtain explicit expression for the branch length distribution which we can recover in our context. We also refer the interested reader to \cite{LP13,L17}.

A powerful technique to study such model, on which this work relies, is based on coding processes, or contour processes. Contour processes are central objects in the study of trees in probability. They allow to substitute the study of trees with the study of real-valued functions which may be considerably easier in many situations. In particular, one of the most famous result involving contour processes is the proof of the convergence of conditioned Galton-Watson trees to the Brownian tree \cite{aldousBTree} using the Harris paths (which is a particular type of contour process). This result relies on a Donsker like Theorem. A considerable amount of research involved different types of contour processes of different types of random trees have been done in \cite{aldousBTree,LeGallTree, lambertTOM,duqu} (among many other works).

 In this work, we use the so-called Jump Chronological Contour Process (JCCP) which was introduced in \cite{Lambert2010}. In \cite{Lambert2010}, Lambert studies time-homogeneous splitting trees and introduce the JCCP as a coding process for such tree. Roughly speaking, this contour process is constructed by going along the tree (on the form of all our figures) from left to right and by writing a decreasing, linear (with slope $1$) function when we are going down (namely we follow the life of an individual) and we write a jump when we are going up (namely at birth). In the homogeneous case, it turns out that this stochastic process is a L\'evy process. In particular, using the well-developed theory of fluctuations of L\'evy processes, this property allowed to understand many properties of the splitting trees and of related models \cite{Lambert2010,CLR,Ricthese,R11}. In more recent work, Lambert and Uribe Bravo \cite{lambertTOM} demonstrate that JCCPs can be used to code for a far more general family of random trees known as \emph{Totally Ordered Measured trees} (TOM trees). In our case, the JCCP is (generally) no longer a L\'evy process, and all the arguments used in the homogeneous case fall down.
 
 \medskip

 In our situation, we show that the JCCP looses its space homogeneity. The main consequence is that the theory of fluctuation of L\'evy process can't be used in this case preventing to extend the methods of the homogeneous case \cite{B98}. However, it remains a Feller process, and we derive its generator:
 \begin{equation}
 \label{eq:genintro}
 Lf(x)=f'(x)+b(x)\int_{\mathbb{R}_{+}}(f(x+y)-f(x))\, K(x,dy),\quad x\in\mathbb{R}_{+},
  \end{equation}
 with a full characterization of its domain. Parameters $b,K$ denotes respectively the birth rate and the lifetime distribution; a full construction of the tree and details on these parameters is given in Section~\ref{sec:model} below. As a consequence, we can exploit the rich literature on Markov processes \cite{EK86,RY,RW00,MT09,KS} to derive new properties of the tree (even in the time-homogeneous case).
 
 One of the typical method to study the fluctuation of Markov processes (in dimension $1$) is the scale functions theory. Unfortunately, scale function theories only exist in the case of L\'evy or diffusion processes. In this work, we introduce a new type of scale function which is relevant for the contour process and is consistent with L\'evy and diffusion theories. Among other results, this allows us to obtain
 
 \begin{thm}[Probability of extinction]
 \label{thm:intro1}
 Under Assumption~\ref{ass:b,K} below, let $E(v)$ be the probability of extinction of the population starting from an individual of fixed lifetime $v$. Then, under Assumption~\ref{ass:b,K}, $E$ satisfies
 \[
 \ E(\mf{v})= e^{-\int_0^\mf{v} b(\mf{u}) d\mf{u}} + e^{-\int_0^\mf{v} b(\mf{u}) d\mf{u}} \int_0^\mf{v} b(\mf{s}) e^{\int_0^\mf{s} b(\mf{u}) d\mf{u}} \int_{[0,+\infty)} E(\mf{s}+\mf{x})\, K(\mf{s},d\mf{x})\, d\mf{s},\quad \forall v\geq 0.
 \]
 \end{thm}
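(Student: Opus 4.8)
The plan is to identify $E$ with a hitting probability of the contour process and then exploit its Markov property through a first-jump decomposition. Throughout, write $B(v)=\int_0^v b(u)\,du$, so that the claimed identity reads $E(v)=e^{-B(v)}+e^{-B(v)}\int_0^v b(s)e^{B(s)}\int E(s+x)\,K(s,dx)\,ds$.

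First I would recast extinction in terms of the JCCP. The population issued from the ancestor dies out precisely when the whole tree is finite (bounded height), and, by the very construction of the contour process recalled in Section~\ref{sec:model}, this happens if and only if the JCCP started from the level $v$ — the death level of an ancestor of lifetime $v$, born at time $0$ — is eventually absorbed at $0$, while survival corresponds to the contour never returning to $0$. I therefore set $E(v)=\mathbb{P}_v(\text{the contour hits }0)$, which is a genuinely \emph{linear} object because the contour is Markov (this is the ``Markovian trick''), and I record the boundary value $E(0)=1$, an ancestor of zero lifetime leaving no progeny.

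Next I would run the first-jump decomposition of this Feller process. Between jumps the contour decreases with unit slope, while at level $x$ a jump occurs at rate $b(x)$ with increment law $K(x,\cdot)$, as in the jump part of \eqref{eq:genintro}. Starting from $v$ and descending at unit speed, the probability of reaching $0$ before the first jump is $\exp\bigl(-\int_0^v b(v-t)\,dt\bigr)=e^{-B(v)}$, and on this event extinction is certain; this yields the first term. Otherwise the first jump is made from some level $s\in(0,v)$ reached without a prior jump, so the level of the first jump has density $b(s)\,e^{-(B(v)-B(s))}$ on $(0,v)$; at that instant the contour jumps to $s+x$ with $x\sim K(s,dx)$, and the strong Markov property restarts the process afresh from $s+x$, contributing the extinction probability $E(s+x)$. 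Collecting the two cases and integrating over $s$ and $x$ gives
\[
E(v)=e^{-B(v)}+\int_0^v b(s)\,e^{-(B(v)-B(s))}\int_{[0,\infty)}E(s+x)\,K(s,dx)\,ds,
\]
which is the announced identity once one writes $e^{-(B(v)-B(s))}=e^{-B(v)}e^{B(s)}$.

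The hard part will be the first reduction: rigorously matching population extinction with absorption of the contour at $0$, which requires controlling the behaviour of the process near $0$ and excluding pathologies (non-explosion, correct absorption) under Assumption~\ref{ass:b,K}, together with a precise statement of the strong Markov property and of the law of the first jump for this time-inhomogeneous jump mechanism. I would also check that $E$ is the relevant (minimal) solution of the resulting integral equation; equivalently, one may verify a posteriori that the displayed $E$ is harmonic for the contour's generator subject to $E(0)=1$ and invoke uniqueness. Note that a naive branching/generating-functional argument on the tree produces instead a two-parameter, nonlinear equation in the birth and death levels, so it is genuinely the Markov structure of the contour — not the branching structure — that linearizes the problem and delivers the clean formula.
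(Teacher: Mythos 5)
Your proposal is correct, but it takes a genuinely different route from the paper's. The paper proves this statement as Corollary~\ref{cor:ext}, via the scale-function machinery of Section~\ref{sec:scale}: for each finite horizon $T$ it constructs $S_T$ as a fixed point of the integral operator (Browder's fixed point theorem, Theorem~\ref{th:def-scale}), shows $S_T$ lies in the domain of the extended generator with $LS_T=0$ and identifies it with the two-sided exit probability $\mathbb{P}_t(\tau_{[0,s]}>\tau_{[T,+\infty)})$ by an It\^o--Dynkin/optional-stopping argument, and then obtains the extinction equation by letting $T\to\infty$, using the monotonicity of $T\mapsto S_T(t)$ (Corollary~\ref{cor:scal-mon}) and monotone convergence (Beppo Levi) to pass to the limit in the fixed-point equation. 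You instead get the equation in one step by a first-jump (renewal) decomposition of the PDMP of Subsection~\ref{ssec:jumpProc}, invoking its strong Markov property at the first jump time; your computation of the first-jump level density $b(s)e^{-(B(v)-B(s))}$ matches exactly the jump mechanism described there, so the decomposition is legitimate. Your route is more elementary and fully sufficient for the statement as phrased (only validity of the equation is claimed, not uniqueness or minimality, so your final verification step is optional); what the paper's heavier construction buys is the finite-horizon scale functions themselves --- the two-sided exit probabilities that later drive the geometric law of $\Xi_t$, the Yaglom-type limits and the $h$-transform conditionings --- together with existence and regularity (absolute continuity) of solutions to the equation, which your argument does not provide: in particular you would still need to justify measurability of $E$ for $\int E(s+x)\,K(s,dx)$ to be well defined. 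The delicate step you correctly flag --- that extinction $\{\mathcal{H}(\mathbb{T})<\infty\}$ coincides with absorption of the contour at $0$ --- is handled in the paper through the identity $\{\mathcal{H}(\mathbb{T})\leq t\}=\{\tau_0<\tau_{[t,+\infty)}\}$ together with the remark after Theorem~\ref{th:dichotomy} that local boundedness of $b$ makes $\mathcal{H}(\mathbb{T})<\infty$ equivalent to $\mathcal{L}(\mathbb{T})<\infty$.
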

 See Corollary~\ref{cor:ext} for details and proof. This last result allows obtaining necessary and sufficient conditions for the extinction of the population.
 Another interesting application of our scale functions is the conditioning of trees. Indeed, we are able to obtain the distribution of IST conditioned either to extinction or non-extinction. A surprising aspect is that homogeneous splitting tree conditioned to non-extinction becomes non-homogeneous in time (see Theorem~\ref{prop:htrans} below).
 
 In this work, we also obtain more practical criteria of extinction than those who derive from Theorem \ref{thm:intro1} (which are optimal but sometimes abstract). To this end, we use Lyapounov drift methods. In particular, denoting $m(x)$ for the mean value of $K(x,dy)$, we show that ``weighted means'' of $( m(s)b(s)-1)$, such as
 \[
 \int_0^{\infty}  \left(( m(s)b(s)-1)  e^{-\int_s^{\infty} b(u)du} \right) ds,
 \] can be used to determine if the population extinct almost surely; see Corollary~\ref{cor:lyapd}. This nicely reminds criticality criteria in homogeneous cases as in \cite[Proposition 2.2]{Lambert2010}. Besides sufficient conditions for extinction or survival, these techniques also enables to derive bounds on the tail of the total length of the tree; see Proposition~\ref{prop:tail}. This type of result seems new (even in the time-homogeneous case) and is in general a difficult question for Markovian tree see for instance \cite{ADJ13,K17}.

\begin{figure}[ht]
\begin{center}
  \subfigure[$N=5$]{\includegraphics[width=5.5cm]{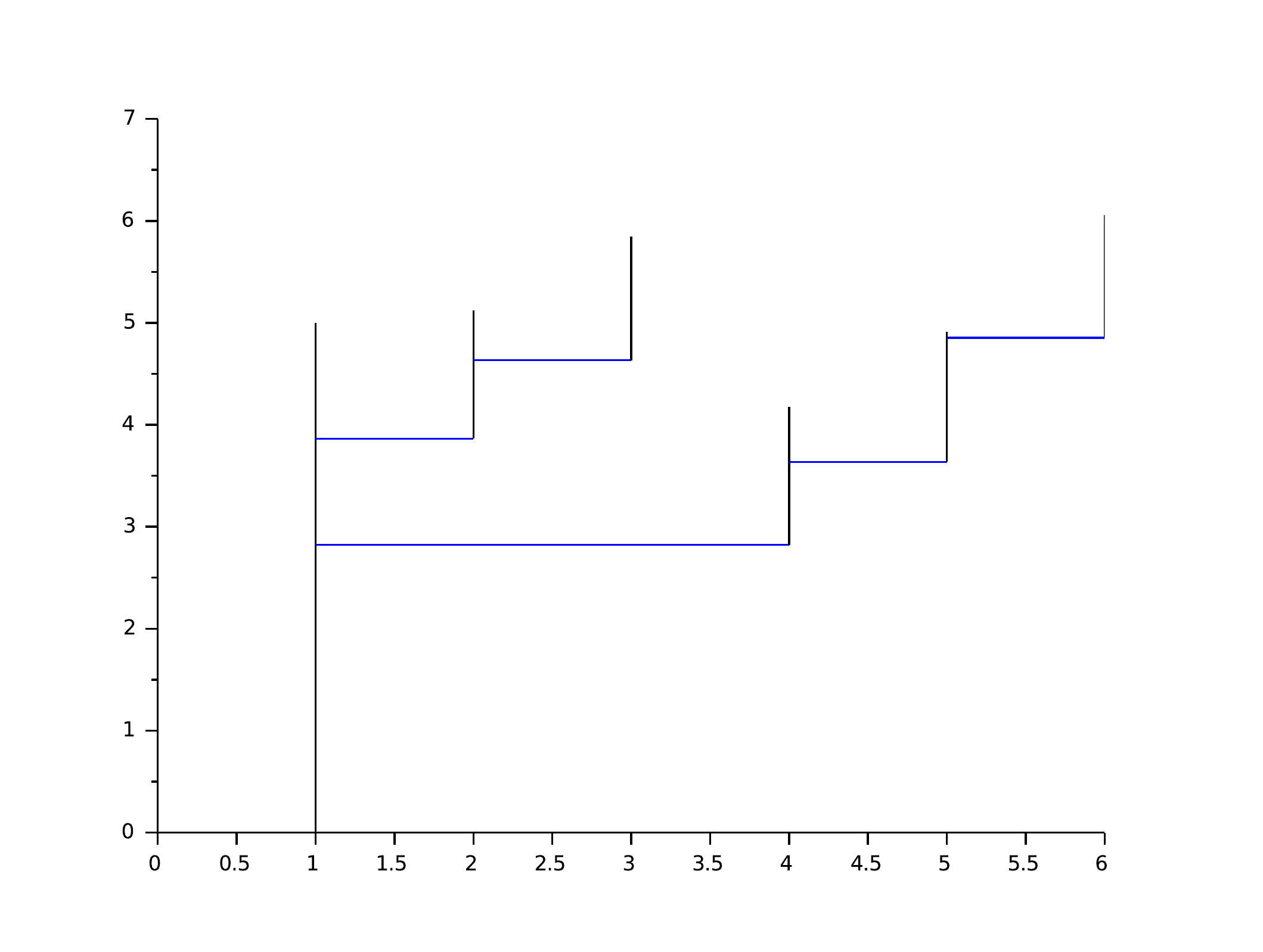}}
  \subfigure[$N=50$]{\includegraphics[width=5.5cm]{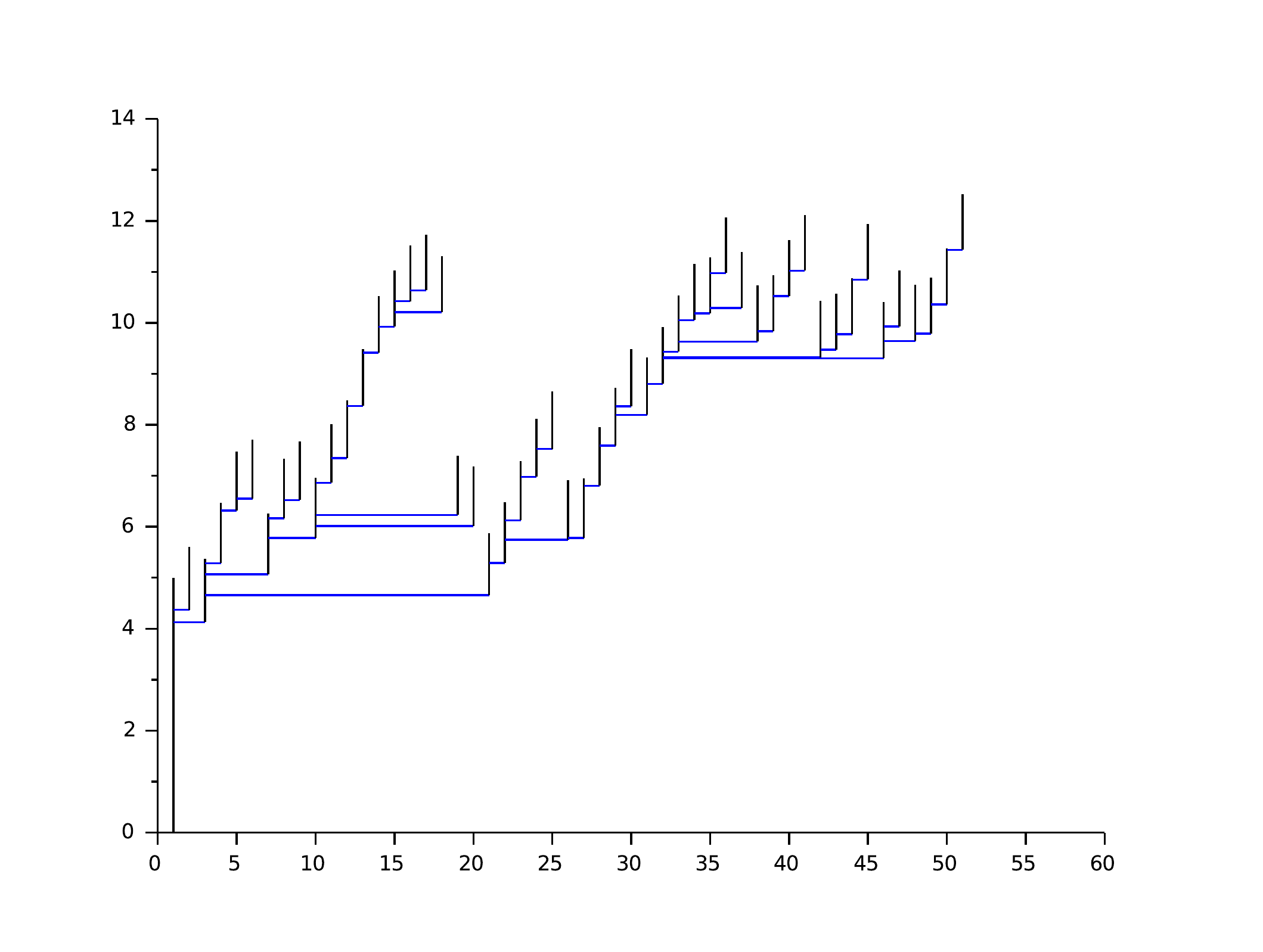}}
  \subfigure[$N=2000$]{\includegraphics[width=5.5cm]{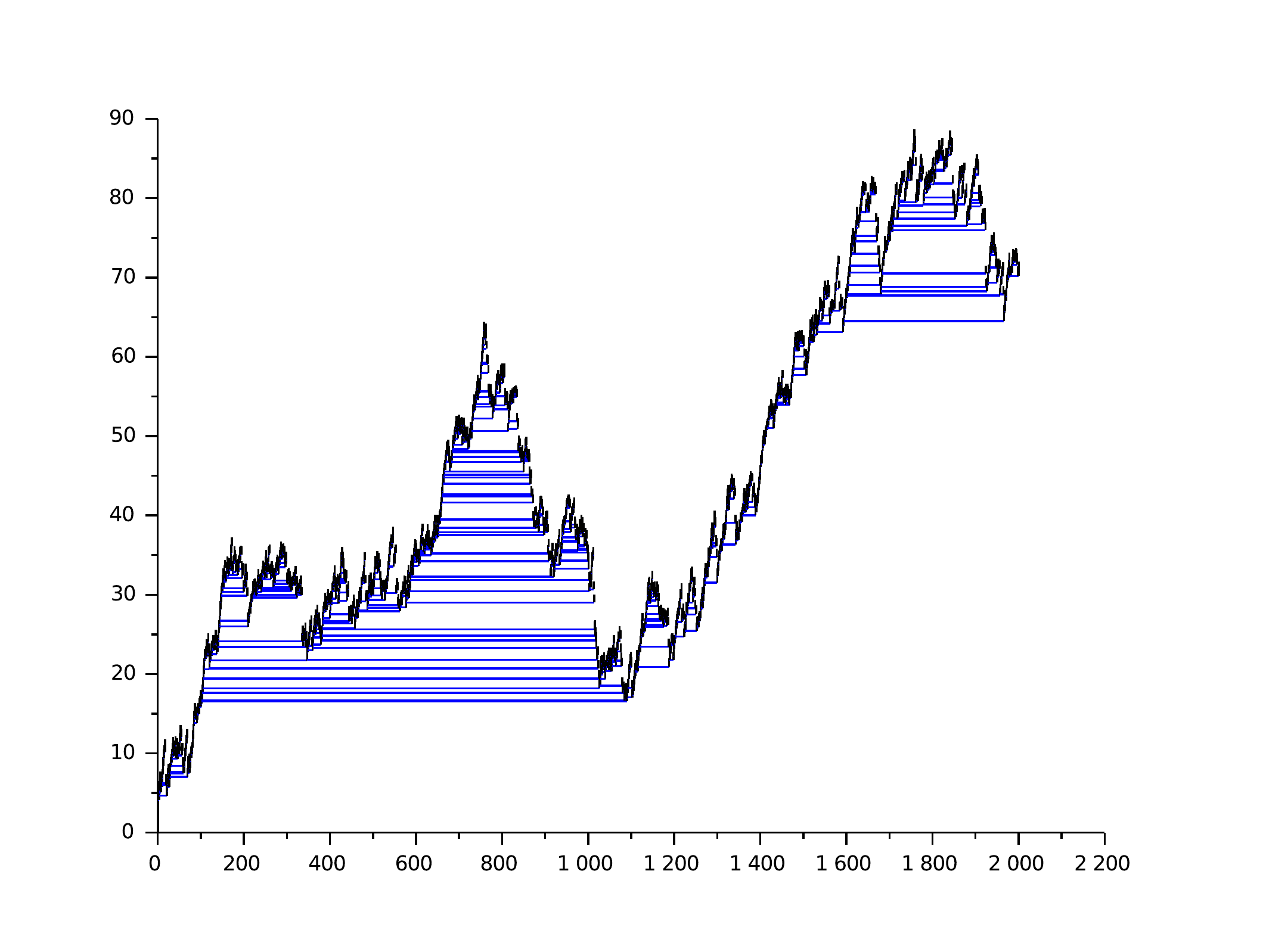}}
\end{center}
\caption{Sampled Inhomogeneous-time Splitting trees with parameters $b:t\mapsto 1$ and $K:(t,A) \mapsto \mathbf{1}_{\{1+1/t \in A \}}$ under $\mathbb{P}_{5}$. The $y-$axe represents the biologic time and the $x-$axe correspond to the individuals. The vertical black lines are the life-time interval of each individual $[B_\sigma, A_\sigma]$ and the horizontal blue lines connect parent and child.   These trees are infinite, and we only represent the first $N$ explored individuals in the sense of Section~\ref{ssec:construction}.}
\label{fig:Bessel-intro}
\end{figure}

Finally, since it was an historical application for the contour of trees \cite{aldousBTree}, we investigate some scaling limits of IST. Firstly, note that the JCCP, as previously and roughly defined, cannot be used to describe continuous (or $\mathbb{R}-$) trees (as defined in \cite{duqu,LeGallTree}). One has to accelerate the reading pace. A tree (continuous or not) embedded with a reading pace is called a TOM tree in \cite{lambertTOM} and the contour process set up a one-to-one map between TOM trees and c\`ad-l\`ag functions with no negative jumps (see \cite[Theorem 1]{lambertTOM}). This correspondence is continuous (see Section~\ref{sec:scaling} for topology details) and it is then enough to consider scaling limits of Markov processes generated by \eqref{eq:genintro}. It is simple to see that we can deduce (almost) any Feller Markov process from a scaling limit of such processes, thus we only answer to two simple and natural questions: fixing $b$ and $K$, when the discrete tree looks like a continuous one (by reading it fast)? Can we observe continuous tree without accelerating the reading pace?

Unfortunately, the answer to the second question is negative and it is proved in Theorem~\ref{prop:noscaling}. For the first one, surprisingly, one can not obtain any continuous tree by reading fast some IST. The only possible limit that we find is the Bessel tree in case of asymptotically critical IST . One example is illustrated in Figure~\ref{fig:Bessel-intro}. Our result reads:

\begin{thm}
\label{th:Bessel-intro}
Let $b,K$ be the birth rate and death kernel of an asymptotically critical IST $\mathbb{T}$ (as defined in Assumption~\ref{hyp:regul}). Set, for all $n\geq 1$, 
$$
\mathbb{T}_n = \{(\delta, t) \ | \ (\delta, t\sqrt{n}) \in \mathbb{T} \},
$$
embedded with the reading pace $(d_n, \lambda_n)$ given by \eqref{eq:scaledl}. Then the sequence $(\mathbb{T}_n, d_n, \lambda_n)$ converges for the Gromov-Hausdorff-Prokhorov to a $\mathbb{R}-$tree whose contour is a Bessel process absorbed at $0$; that is a Markov process with generator $A$ given by
$$
A f(x) = \frac{c}{x} f'(x) + \frac{1}{2} f''(x),
$$
for some $c\in \mathbb{R}$ and all $x>0$ and $f\in C_c^\infty(\mathbb{R}_+)$. 
\end{thm}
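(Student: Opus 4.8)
The plan is to push the entire statement through the contour process. By the Lambert--Uribe Bravo correspondence \cite{lambertTOM}, the map sending a c\`ad-l\`ag excursion with no negative jumps to the TOM tree it codes is continuous for the topologies underlying Gromov--Hausdorff--Prokhorov (GHP) convergence, the reading pace being exactly the push-forward of Lebesgue measure along the contour. Hence it suffices to prove that the contour process of $\mathbb{T}_n$ — which is the JCCP of $\mathbb{T}$ read with pace $(d_n,\lambda_n)$ and with biological time compressed by $1/\sqrt n$ — converges in law, in the Skorokhod topology, to a Bessel process absorbed at $0$, and then to transfer this convergence (together with that of the associated length measures) through the continuous correspondence. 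So the analytic core is a scaling limit for the Feller process generated by \eqref{eq:genintro}.

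For the generator convergence I would write the rescaled contour as $X^n=X_{\lambda_n\,\cdot}/\sqrt n$, with $X$ generated by $L$ of \eqref{eq:genintro}, and test it against $g\in C_c^\infty(\RR_+)$ via $f(x)=g(x/\sqrt n)$. A second-order Taylor expansion of the jump term,
\[
\int_{\RR_+}\bigl(g(y+z/\sqrt n)-g(y)\bigr)\,K(\sqrt n\,y,dz)=\frac{m(\sqrt n\,y)}{\sqrt n}\,g'(y)+\frac{m_2(\sqrt n\,y)}{2n}\,g''(y)+R_n(y),
\]
with $m(x)=\int z\,K(x,dz)$ and $m_2(x)=\int z^2\,K(x,dz)$, shows that the linear drift of the contour and the mean displacement of the jumps combine into a net infinitesimal drift proportional to the criticality deficit $b(\sqrt n\,y)\,m(\sqrt n\,y)-1$, while the jumps simultaneously generate a genuine diffusive term carried by $b(\sqrt n\,y)\,m_2(\sqrt n\,y)$. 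Choosing the time-acceleration $\lambda_n\asymp n$ dictated by \eqref{eq:scaledl}, the asymptotic criticality of Assumption~\ref{hyp:regul} is precisely the statement that, at the relevant scale $t=\sqrt n\,y\to\infty$, the deficit decays like $1/t$ while $b\,m_2$ stabilises; this yields
\[
L_n g(y)\longrightarrow \frac{c}{y}\,g'(y)+\frac12\,g''(y)=A g(y),\qquad y>0,
\]
uniformly on compact subsets of $(0,\infty)$, with $c$ read off from the $1/t$-rate of $b\,m-1$.

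With the generators matched, I would invoke the convergence theorem for Feller semigroups of Ethier--Kurtz \cite{EK86}: it remains to exhibit a core for $A$ on which $L_n\to A$, to control the remainder $R_n$ (which needs a uniform bound on a truncated third moment of $K$, again furnished by Assumption~\ref{hyp:regul}), and to check the convergence of the starting point $v/\sqrt n$. The limit is then the diffusion with generator $A$ on $(0,\infty)$; identifying it as the Bessel process \emph{absorbed} at $0$ amounts to the boundary classification at the origin, i.e. verifying via Feller's test that $0$ is an exit/absorbing boundary for the value of $c$ produced above — absorption corresponding, on the tree side, to the contour returning to the root level and the exploration terminating. Tightness in Skorokhod space would follow from the generator bounds and an Aldous-type criterion, and the weak convergence of the reading-pace measures $\lambda_n$ from the convergence of the contours together with the uniform integrability built into \eqref{eq:scaledl}.

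The main obstacle, I expect, is the behaviour near $0$. The drift $c/x$ blows up there, so the generator convergence and the tightness estimates degrade exactly where the absorbing boundary sits; one cannot expand uniformly down to the origin, because the asymptotic-criticality estimates are asymptotic in $t=\sqrt n\,y$ and hence weakest for small $y$. Controlling the process in a neighbourhood of the root — showing that the approximating contours do not spend anomalous time there, that $0$ is genuinely absorbing in the limit, and that no mass of $\lambda_n$ escapes at the boundary — is where the real work lies, and is what ultimately pins down both the admissible range and the precise value of the Bessel constant $c$.
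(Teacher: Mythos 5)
Your overall architecture coincides with the paper's: compute the generator $L_n$ of the rescaled contour, Taylor-expand the jump term so that Assumption~\ref{hyp:regul} turns $\frac{a_n}{c_n}\left(b(c_nx)m(c_nx)-1\right)$ into $c/x$ and $\frac{a_n}{c_n^2}b(c_nx)m_2(c_nx)$ into $1$, invoke the Ethier--Kurtz convergence theorem, and transfer Skorokhod convergence of contours to Gromov--Hausdorff--Prokhorov convergence of trees via the continuity of the coding of \cite{lambertTOM}. But your proof is incomplete exactly where you say ``the real work lies'': you claim $L_n g \to Ag$ only uniformly on compact subsets of $(0,\infty)$, whereas the Ethier--Kurtz theorems used here (Theorems 1.6.1 and 2.2.5 of \cite{EK86}) demand convergence in the norm of the Banach space $C_0(\mathbb{R}_+)$, i.e.\ $\|L_n f_n - Af\|_\infty \to 0$ for every $f$ in a core of $A$, with $f_n \in D(L_n)$ and $\|f_n-f\|_\infty \to 0$. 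Since the limiting drift $c/x$ is unbounded at the origin and the estimates of Assumption~\ref{hyp:regul} are asymptotic in $c_nx$ (hence weakest near $0$), this sup-norm statement is precisely the hard point; deferring it leaves the theorem unproved. Moreover, locally-uniform generator convergence plus an Aldous-type tightness bound does not identify the limit at the absorbing boundary, and Feller's boundary classification is not a substitute for specifying, and exhibiting a core for, the generator of the \emph{absorbed} process --- a genuine issue here because the statement covers every $c\in\mathbb{R}$, including dimensions for which the paper notes that the martingale-problem route only works when $c\geq -1/2$.

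The paper closes this gap with a specific functional device that is missing from your outline: it works on the core $\mathcal{D}=\{f\in C_c^\infty(\mathbb{R}_+) \mid f^{(n)}(0)=0,\ \forall n\geq 1\}$, proved dense in $C_0(\mathbb{R}_+)$ (Lemma~\ref{lem:goodSpace}) and proved to be a core for the absorbed-Bessel generator $A^c$. For such $f$ the quantity $f'(x)/x$ tends to $0$ as $x\to 0$ faster than any polynomial, so the $c/x$ singularity is harmless; the supremum of $|L_nf_n - A^c f|$ over $(0,1]$ is then controlled by a subsequence/contradiction argument, while the supremum over $[1,\infty)$ follows from the uniform convergence of $S_d,S_v$ on sets $[a,\infty)$ and the third-moment bound of Assumption~\ref{hyp:regul}. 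In addition, because the pre-limit domains carry the boundary constraint $L_nf(0)=0$, the paper replaces $f$ by $f_n = f - \int_{\mathbb{R}_+}\left(f(y/\sqrt{n})-f(0)\right)K(0,dy)$, a constant shift that places $f_n$ in $D(L_n)$ while keeping $\|f_n-f\|_\infty\to 0$. Without these three ingredients --- the flat-at-zero core, the domain correction, and the global sup-norm estimate --- your argument stops at a correct description of the difficulty rather than a resolution of it.
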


Again this type of convergence results is in general very difficult to prove for general non-markovian tree \cite{LSZ13,SS15}. The Markov property of the contour makes easier the proof of the previous result even it is not trivial. Indeed, due to the possible absorption, some schemes of usual proof are not possible and the previous proof is based on sharp functional results.\\

\textbf{Outline.} In Section \ref{sec:model}, we introduce the construction of the inhomogeneous splitting trees. Although this construction is quite standard, the notations of this section will be used in the sequel. Section \ref{sec:JCCP} is devoted to the study of the JCCP of the IST. In particular, Subsection \ref{ssec:construction} is a remainder of \cite{Lambert2010,lambertTOM} on the construction of the JCCP. In Subsection \ref{ssec:markov}, we show that the contour process is Markov, has the Feller property and derive its generator. This is our main result and the cornerstone of our others results. Subsection \ref{ssec:jumpProc} gives another construction of the contour seen as a stand-alone process. In Section \ref{sec:scale}, we introduce a notion of scale functions for the contour allowing to study many properties of the tree. In Section \ref{sec:lyap}, we introduce a notion of criticality for the tree and study it through the contour processes using Lyapounov drift conditions.  In the last Section~\ref{sec:scaling}, we look at scaling limit of such trees using convergence results on Markov processes.

Finally, as we use different notions of generators in the paper, we write an appendix on these notions in the end of the paper which at least defines properly the terms and notations we use.

\section{Description of the model}
\label{sec:model}
In this section, we introduce the construction of the time-inhomogeneous population model. Although this construction is classical, we recall it  for sake of presentation.  This model is described by a branching tree, where individuals live and reproduce independently from each other. The main point is that the birth-rate and the lifetime distribution of the individuals are time-dependent. More precisely, we suppose that the birth-rate is given by some measurable function $$\b{t}\in\mathbb{R}_{+}\mapsto b(\b{t})\in\mathbb{R}_{+}$$ and the lifetime distribution of the individual is given by a Markov kernel $$\b{t}\in\mathbb{R}_{+}\mapsto K(\b{t},dx)\in\mathcal{M}_{1}(\overline{\mathbb{R}}^{\ast}_{+}),$$ where $\mathcal{M}_{1}(\overline{\mathbb{R}}^{\ast}_{+})$ denotes the space of probability measure on the extended positive real line. Such model can be called inhomogeneous splitting tree (IST) in reference to the splitting trees of \cite{Lambert2010}.
Although different from Jagers-Nermann general branching processes \cite{Jagers}, IST are constructed in a similar way. However, contrary to Jagers-Nermann general branching processes, ISTs do not present renewal structures, which is the core point of the study of Jagers-Nermann branching processes.  

To give the construction of ISTs, let us introduce some notations. In the following, we denote by $\mathcal{U}$ the so-called Ulam-Harris-Neveu set, that is
\[
\mathcal{U}=\bigcup_{n\geq 0}\mathbb{N}^{n}.
\]
This set is meant to label individuals in a genealogically consistent way and to describe the discrete genealogy of the population. For two elements $v=(v_{1},\dots,v_{n})$ and $w=(w_{1},\dots,w_{m})$ in $ \mathcal{U}$, let $v.w$ be the concatenation of $v$ and $w$, that is $v.w=(v_{1},\dots,v_{n},w_{1},\dots,w_{n})$ of $\mathcal{U}$. In addition, for any positive integer $k$, we denote $v^{k}=(v_{1},\dots,v_{k})$ with the convention that $v^{k}=v$ if $k\geq n$. 

As for standard splitting trees, ISTs are random chronological trees. A chronological tree $\mathbb{T}$ is defined as a subset of $\, \mathcal{U}\times \mathbb{R}_{+}$ such that $(\sigma,\b{t})\in\mathbb{T}$ if and only if the $\sigma$-labeled individual is alive at time $\b{t}$. Hence, every individual $v\in\mathcal{U}$ in the tree is formalized by a subset of the form $\{v \}\times(\b{a},\b{b}]$ where $\b{a}$ is its birthdate and $\b{b}$ is its deathdate. From this remark, it is clear that a subset of $\,\mathcal{U}\times \mathbb{R}_{+}$ must satisfy some properties to be admissible as a chronological tree. We do not recall these properties which are well-known and refer the interested reader to the paper of Lambert \cite{Lambert2010} for more details. In the sequel, $P_{\mathcal{U}}$ (resp. $P_{\mathbb{R}_+}$) denotes the canonical projection of $\mathcal{U}\times\mathbb{R}_{+}$ to $\mathcal{U}$ (resp.\ $\mathbb{R}_+$). In particular, for a chronological tree $\mathbb{T}$, $P_{\mathcal{U}}(\mathbb{T})$ gives the discrete genealogy of $\mathbb{T}$. 

\bigskip

We now give the construction of ISTs. Let $(\mathcal{N}_{u})_{u\in\mathcal{U}}$ be an i.i.d.\ family of Poisson random measures with common intensity $b(t)\, dt$, where $dt$ refers to the Lebesgue measure on $\mathbb{R}_{+}$. We recursively define an increasing (for the inclusion order) of sequence trees.
Let
\[
\left\{
\begin{array}{l}
T_{1}=\{\xi_{\emptyset} \}\times(0,\b{\xi_{\emptyset}}] ,\\
B_{\emptyset}=0
\end{array}
\right.
\]
where $\xi_{\emptyset}$ is some random variable whose distribution is given later.
Now defines recursively,
\[
T_{n}=\bigcup_{v\in P_{\mathcal{G}}(\mathbb{T}_{n-1})}\bigcup_{i\geq 1}^{\mathcal{N}_{v}(B_{v},B_{v}+\xi_{v}]}\{v.i \}\times(B_{v.i},B_{v.i}+\xi_{(v,i)}],\\
\]
with
\[
B_{v.1}=\inf\{t>0\mid \mathcal{N}_{v}(B_{v},B_{v}+t] >0\},\quad \text{if }\mathcal{N}_{v}(B_{v},B_{v}+\xi_{v}]>0,
\]
\[
B_{v.i}=\inf\{t>B_{(v,i-1)}\mid \mathcal{N}_{v}(B_{v},B_{v}+t]>i \},\quad 2\leq i\leq \mathcal{N}_{\emptyset}(0,\xi_{\emptyset}],
\]
and $
\xi_{v.i}
$ is a random variable with conditional distribution with respect to $B_{(v,i)}$ given by $K(B(v,i),dx)$, assuming, in addition, that $
\xi_{v.i}
$  independent from any other random quantities except from $\mathcal{N}_{v}$.
Finally, the IST is given by 
\[
\mathbb{T}=\bigcup_{n\geq 1}T_{n}.
\]

Let us highlight that, for any $\sigma\in\mathbb{P}_{\mathcal{U}}(\mathbb{T})$, $B_{\sigma}$ refers to the birthdate of individual $\sigma$. Similarly, for any $\sigma\in\mathbb{P}_{\mathcal{U}}(\mathbb{T})$, let us set
\[
A_{\sigma}=B_{\sigma}+\xi_{\sigma},
\]
the death-date of individual $\sigma$.
 
This procedure defines a probability measure on the space of chronological trees. In the sequel, we consider tree starting from an ancestor whose lifetime is a fixed real number. For this reason, we denote by $\mathbb{P}_{x}$ the probability distribution on the space of chronological trees satisfying $\mathbb{P}_{x}(\xi_{\emptyset}=x)=1$. 
 
Before ending this section, let us make our general assumptions in order to have a well-behaved contour process. 

\noindent
\begin{assu}[Standing Assumptions]
\label{ass:b,K} \
\begin{enumerate}
\item Function $x\mapsto b(x)$ is a measurable and locally bounded function.
\item Function $x\mapsto K(x,dy)$ is weakly continuous in $\mathcal{M}_{1}(\overline{\mathbb{R}}^{\ast}_{+})$; namely for every continuous and bounded function $f$, $x\mapsto Kf(x)=\int_{\mathbb{R}_+} f(y) K(x,dy)$ is continuous. 
\end{enumerate}
\end{assu}

This assumption is only a sufficient condition to build our IST; it is not a necessary one.

\section{The contour process of an inhomogeneous splitting tree}

In this section, we introduce and characterize the contour process. Subsection~\ref{ssec:construction} is devoted to the definition of the JCCP and is essentially a remainder of \cite{Lambert2010}. We establish these properties and characterize it in case of IST in Subsection~\ref{ssec:markov}. Finally, we give another construction of a process having the same law in subsection~\ref{ssec:jumpProc}.

\label{sec:JCCP}
\subsection{Construction of the contour process}
\label{ssec:construction}
In this section, we briefly recall some facts on the construction of the JCCP which is due to Lambert in \cite{Lambert2010}. Recently, in \cite{lambertTOM}, Lambert and Uribe Bravo endowed this construction in the more general framework of TOM trees (totally ordered measured trees) which we briefly recall here because it is used in Section \ref{sec:scaling}. A TOM tree $(\mathcal{T},\leq,\lambda)$ is a real tree (see e.g. \cite{duqu,LeGallTree} for the definition) equipped with a total order $\leq$ and a $\sigma$-finite measure $\lambda$ satisfying some properties (it is locally bounded, diffuse and charges all non-empty intervals, see \cite{lambertTOM} for details).
 Given the order relation $\leq$, one can define the left of an element $x$ of $\mathcal{T}$ by
 \[
 L_{x}=\left\{y\in \mathcal{T} \mid y\leq x \right\},
 \]
 and the inverse exploration process by
\[
\begin{array}{llll}
\varphi^{-1}:&\mathcal{T} & \to & \mathbb{R}_{+}, \\
 &x & \mapsto & \lambda\left(\{y  \mid  \ y\leq x\} \right).
\end{array}
\]
As explain in \cite{lambertTOM}, the exploration process $\varphi$ is the unique c\`ad-l\`ag extension of the generalized inverse of $\varphi^{-1}$. 
  This construction is used in the next section in order to characterize the law of this process when the underlying chronological tree is an IST.
  
  \bigskip
 
In our particular case, we use the formalism introduced in \cite{Lambert2010} which endows the trees with a total order and measured structure. 
Let $\mathbb{T}$ be some chronological tree. In \cite{Lambert2010}, Lambert introduces on $\mathbb{T}$ a measure $\lambda$ and a total order relation $\leq$ which can be summarized as follows: for two elements $(\sigma,t)$ and $(\delta,s)$, we say that $(\sigma,t)\leq (\delta,s)$ if and only if $(\sigma,t)$ and $(\delta,s)$ satisfy one of the two following condition (see also Figure \ref{fig:order}):
\[
\left\{
\begin{array}{ll}
\delta\preceq \sigma \text{ and }  P_{\mathbb{R}}((\delta,s)\wedge(\sigma,t))\geq s& \quad (C1)\\
\text{or}\\
\exists n\in\mathbb{N},\ \sigma^{n}\preceq \delta \text{ and } t> B_{\sigma^{n}},& \quad (C2)
\end{array}
\right.
\]
where $\preceq$ is the classical order relation on $\mathcal{U}$. Informally speaking, the measure $\lambda$ can be thought as the length of the segments in the tree.
 We refer the reader to \cite[Section 2]{Lambert2010} for more details. In the sequel, we set
 \begin{equation}
 \label{eq:defLEngth}
 \mathcal{L}(\mathbb{T}):=\lambda(\mathbb{T})\text{ and }  \mathcal{H}(\mathbb{T}):=\sup P_{\mathbb{R}}(\mathbb{T}),
 \end{equation}
 which are respectively the total length and the height of the tree $\mathbb{T}$.
 In particular,  $\mathcal{H}(\mathbb{T})\leq \mathcal{L}(\mathbb{T})$ and $\mathcal{H}(\mathbb{T})<\infty$ means that the population gets extinct in finite time. This event is denoted by
 \begin{equation}
\label{eq:extEvent}
 \text{Ext}:=\{\mathcal{H}(\mathbb{T})<\infty \}.
\end{equation}
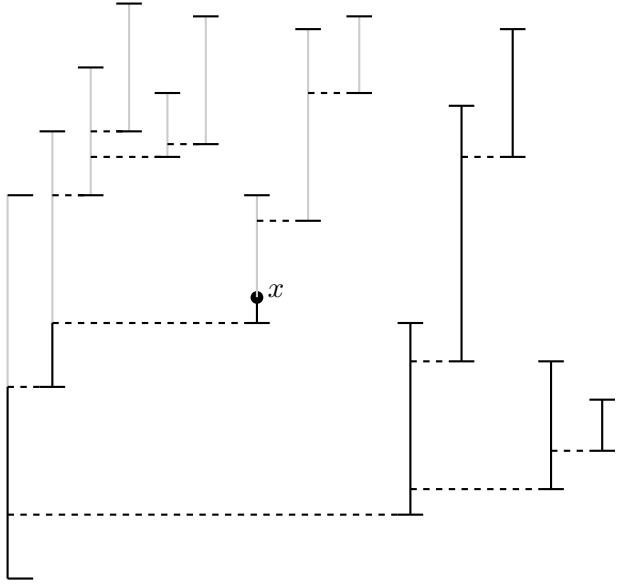
\begin{figure}

\unitlength 2mm 
\linethickness{0.4pt}

\unitlength 1.7mm 
\linethickness{0.8pt}
\begin{picture}(15,45)(-30,0)

\put(0,0){\line(0,1){15}}
\color{gray5}
\put(0,15){\line(0,1){15}}
\color{black}
\put(0,0){\line(1,0){2}}
\put(0,30){\line(1,0){2}}
\multiput(0,15)(1,0){3}{\line(1,0){0.5}}

\put(3.5,15){\line(0,1){5}}
\color{gray5}
\put(3.5,20){\line(0,1){15}}
\color{black}
\put(2.5,15){\line(1,0){2}}
\put(2.5,35){\line(1,0){2}}
\multiput(3.5,30)(1,0){3}{\line(1,0){0.5}}
\color{gray5}
\put(6.5,30){\line(0,1){10}}
\color{black}
\put(5.5,30){\line(1,0){2}}
\put(5.5,40){\line(1,0){2}}
\multiput(6.5,35)(1,0){3}{\line(1,0){0.5}}
\color{gray5}
\put(9.5,35){\line(0,1){10}}
\color{black}
\put(8.5,35){\line(1,0){2}}
\put(8.5,45){\line(1,0){2}}
\multiput(6.5,33)(1,0){6}{\line(1,0){0.5}}
\color{gray5}
\put(12.5,33){\line(0,1){5}}
\color{black}
\put(11.5,33){\line(1,0){2}}
\put(11.5,38){\line(1,0){2}}
\multiput(12.5,34)(1,0){3}{\line(1,0){0.5}}
\color{gray5}
\put(15.5,34){\line(0,1){10}}
\color{black}
\put(14.5,34){\line(1,0){2}}
\put(14.5,44){\line(1,0){2}}
\put(19.5,22){\circle*{1}}
\put(20.5,22){\makebox(1,1){$x$}}
\multiput(3.5,20)(1,0){15}{\line(1,0){0.5}}
\color{gray5}
\put(19.5,22){\line(0,1){8}}
\color{black}
\put(19.5,20){\line(0,1){2}}

\put(18.5,20){\line(1,0){2}}
\put(18.5,30){\line(1,0){2}}
\multiput(19.5,28)(1,0){3}{\line(1,0){0.5}}
\color{gray5}
\put(23.5,28){\line(0,1){15}}
\color{black}
\put(22.5,28){\line(1,0){2}}
\put(22.5,43){\line(1,0){2}}
\multiput(23.5,38)(1,0){3}{\line(1,0){0.5}}
\color{gray5}
\put(27.5,38){\line(0,1){6}}
\color{black}
\put(26.5,38){\line(1,0){2}}
\put(26.5,44){\line(1,0){2}}
\multiput(0,5)(1,0){31}{\line(1,0){0.5}}
\put(31.5,5){\line(0,1){15}}
\put(30.5,5){\line(1,0){2}}
\put(30.5,20){\line(1,0){2}}
\multiput(31.5,17)(1,0){3}{\line(1,0){0.5}}
\put(35.5,17){\line(0,1){20}}
\put(34.5,17){\line(1,0){2}}
\put(34.5,37){\line(1,0){2}}
\multiput(35.5,33)(1,0){3}{\line(1,0){0.5}}
\put(39.5,33){\line(0,1){10}}
\put(38.5,33){\line(1,0){2}}
\put(38.5,43){\line(1,0){2}}
\multiput(31.5,7)(1,0){10}{\line(1,0){0.5}}
\put(42.5,7){\line(0,1){10}}
\put(41.5,7){\line(1,0){2}}
\put(41.5,17){\line(1,0){2}}
\multiput(42.5,10)(1,0){3}{\line(1,0){0.5}}
\put(46.5,10){\line(0,1){4}}
\put(45.5,10){\line(1,0){2}}
\put(45.5,14){\line(1,0){2}}
\end{picture}
\caption{A tree $\mathbb{T}$, with in gray the set $\left\{y\in\mathbb{T}\mid y\leq x \right\}$ and in black its complementary. } 
\label{fig:order}
\end{figure}
Let us now recall an important result concerning finite chronological tree.
\begin{prop}[Theorem 3.1 of \cite{Lambert2010}]
 Let $\mathbb{T}$ be a chronological tree such that $\mathcal{L}(\mathbb{T})<\infty$. Then the function $(\mathcal{E}_{s}(\mathbb{T}))_{ s\in[0,\mathcal{L}(\mathbb{T})]}$ defined by
 \[
 \mathcal{E}_{s}(\mathbb{T})=\inf\left\{x\in\mathbb{T}\mid \lambda\left(\{y\in\mathbb{T}\mid y\leq x \} \right)\geq s\right\}, \quad s\in[0,\mathcal{L}(\mathbb{T})].
 \]
 is an increasing bijection from $[0,\mathcal{L}(\mathbb{T})]$ to $\mathbb{T}$. This process is called the exploration process of $\mathbb{T}$ (see Figure \ref{fig:explorationProcess}).
\end{prop}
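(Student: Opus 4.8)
The plan is to recognise $\mathcal{E}$ as the genuine inverse of the position map $\varphi^{-1}\colon x\mapsto \lambda(\{y\in\mathbb{T}\mid y\leq x\})$ and to show that this map is a continuous, strictly increasing bijection from $(\mathbb{T},\leq)$ onto $[0,\mathcal{L}(\mathbb{T})]$; the stated properties of $\mathcal{E}$ then follow by inversion. Throughout I would use the three structural features of the pair $(\leq,\lambda)$: here $\lambda$ is \emph{finite} (since $\mathcal{L}(\mathbb{T})=\lambda(\mathbb{T})<\infty$), \emph{diffuse}, and \emph{charges every non-empty order interval}. First I would record plain monotonicity: if $x\leq x'$ then $\{y\mid y\leq x\}\subseteq\{y\mid y\leq x'\}$, so $\varphi^{-1}(x)\leq\varphi^{-1}(x')$, and in fact $\varphi^{-1}(x')-\varphi^{-1}(x)=\lambda(\{y\mid x<y\leq x'\})$.

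To upgrade this to strict monotonicity I would first check that $(\mathbb{T},\leq)$ is densely ordered. Inside the life-interval of a single individual the order is that of a real interval, hence dense; the only points where density could fail are the branching points, but there both the excursion into the newly explored sub-tree and the remaining part of the parent's life accumulate onto the branching point, so no element of $\mathbb{T}$ admits an immediate predecessor or successor. Consequently, for $x<x'$ the open interval $\{y\mid x<y<x'\}$ is non-empty, the charging property gives it positive $\lambda$-mass, and therefore $\varphi^{-1}(x)<\varphi^{-1}(x')$. In particular $\varphi^{-1}$ is injective.

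Next I would establish continuity of $\varphi^{-1}$ for the order topology. Because $\lambda$ is diffuse it has no atoms, so $\lambda(\{y\mid y<x\})=\lambda(\{y\mid y\leq x\})$, and combined with density the left and right limits of $\varphi^{-1}$ at each $x$ both coincide with $\varphi^{-1}(x)$; thus $\varphi^{-1}$ has no jumps. Since $(\mathbb{T},\leq)$ is order-connected with $\varphi^{-1}=0$ at the tip of the ancestor and $\varphi^{-1}=\lambda(\mathbb{T})=\mathcal{L}(\mathbb{T})$ at the maximal point (using the standing convention on the endpoints of the life-intervals, so that the relevant tips belong to $\mathbb{T}$ and both extreme values are attained), the intermediate value property forces the image of $\varphi^{-1}$ to be all of $[0,\mathcal{L}(\mathbb{T})]$. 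Together with injectivity this makes $\varphi^{-1}$ a continuous strictly increasing bijection onto $[0,\mathcal{L}(\mathbb{T})]$.

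Finally I would identify $\mathcal{E}$ with $(\varphi^{-1})^{-1}$: for each $s$ the set $\{x\mid\varphi^{-1}(x)\geq s\}$ is an up-set whose infimum exists in $\mathbb{T}$ by Dedekind completeness of the tree order, and continuity of $\varphi^{-1}$ shows the infimum is attained at the unique $x$ with $\varphi^{-1}(x)=s$, so $\mathcal{E}_s=(\varphi^{-1})^{-1}(s)$ is an increasing bijection $[0,\mathcal{L}(\mathbb{T})]\to\mathbb{T}$. I expect the one genuinely delicate point to be the simultaneous handling of the two competing requirements at branching points: diffuseness of $\lambda$ is what yields continuity, and hence surjectivity, while the charging property is what yields strict monotonicity, and hence injectivity, and the two are compatible precisely because the order accumulates from both sides at every branch point, so that no spurious flat part or jump of $\varphi^{-1}$ is created. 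Verifying this density together with the order-completeness directly from the explicit definition of $\leq$ through conditions (C1)--(C2) is the main technical obstacle; once it is in place, the inversion argument is routine.
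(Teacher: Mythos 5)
First, a point of reference: the paper itself contains no proof of this statement --- it is recalled verbatim as Theorem 3.1 of \cite{Lambert2010} --- so your attempt can only be judged against what such a proof must contain, not against an argument in the text. Your overall strategy, namely showing that $\varphi^{-1}\colon x\mapsto\lambda\left(\{y\in\mathbb{T}\mid y\leq x\}\right)$ is a strictly increasing, continuous bijection onto the interval and then defining $\mathcal{E}$ by inversion, is exactly the mechanism that the TOM-tree formalism recalled in Subsection~\ref{ssec:construction} codifies (there $\mathcal{E}$ is the c\`ad-l\`ag extension of the generalized inverse of $\varphi^{-1}$), so the architecture is the right one, and your use of diffuseness for continuity and of the charging property for strict monotonicity is correctly placed.

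The gaps lie in what you assume rather than prove. The structural facts you take as given --- density of the order, Dedekind completeness, and the fact that $\lambda$ charges every non-empty order interval --- are precisely the substance of the theorem for the concrete order defined by (C1)--(C2) and for Lambert's length measure; you sketch density at branch points, but completeness is invoked twice (for the intermediate-value argument giving surjectivity, and for existence and attainment of the infimum defining $\mathcal{E}_s$) and is never argued even in outline. It is not automatic: $\mathcal{L}(\mathbb{T})<\infty$ does not force the tree to have finitely many individuals, so suprema of chains passing through infinitely many individuals must be shown to exist in $\mathbb{T}$, and this verification from (C1)--(C2) is the core of the cited result. Second, your endpoint claim is wrong under the paper's conventions: life segments are half-open, $\{v\}\times(B_{v},A_{v}]$, so the root's base point $(\emptyset,0)$ does \emph{not} belong to $\mathbb{T}$. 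The order therefore has a minimum (the ancestor's death point, where $\varphi^{-1}=0$ by diffuseness) but no maximum, the range of $\varphi^{-1}$ is $[0,\mathcal{L}(\mathbb{T}))$, and the value $\mathcal{L}(\mathbb{T})$ is reached only after adjoining $(\emptyset,0)$ by convention --- as the paper does implicitly when it says the exploration ``hits the point $(\emptyset,0)$ at time $\mathcal{L}(\mathbb{T})$''. Your parenthetical that ``the relevant tips belong to $\mathbb{T}$'' is thus false at the root, and as written your argument yields a bijection from $[0,\mathcal{L}(\mathbb{T}))$ onto $\mathbb{T}$, not the stated bijection on the closed interval; this is a convention wrinkle inherited from the statement, but it needs to be addressed explicitly rather than waved at.
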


An important consequence of the construction of the contour process which is used in the sequel is that, for any point $(\delta,t)$ of $\mathbb{T}$,
\begin{equation}
\label{eq:inclu}
\left\{ \mathcal{E}_{s}(\mathbb{T}) \mid s\leq \mathcal{E}_{(\delta,t)}^{-1}(\mathbb{T}) \right\}=\left\{x\in \mathbb{T}\mid x\leq (\delta, t) \right\}.
\end{equation}
In other words, the part of the tree which is explored by the exploration process up to the exploration of point $(\delta,t)$ is exactly the left of $(\delta,t)$.
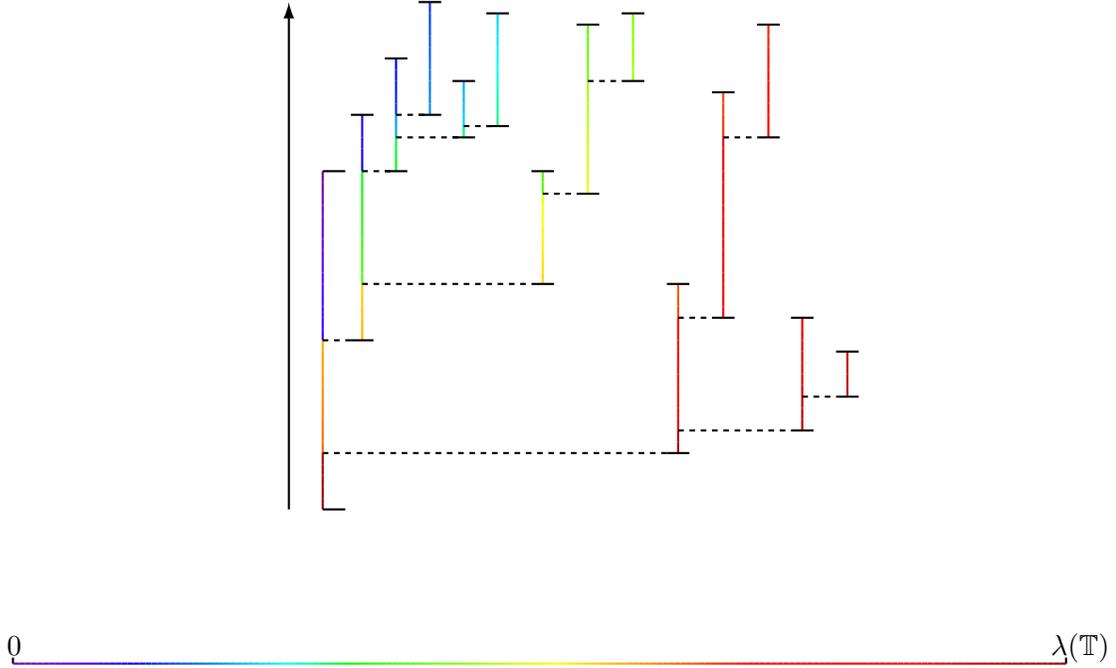
\begin{figure}[ht]
\begin{center}	
\unitlength 1.5mm 
\linethickness{0.8pt}
\begin{picture}(250,50)(-35,0)
\put(0,0){\vector(0,1){45}}
\put(3,0){
\newcount\compteur \compteur=400
\multiput(0,30)(0,-1){15}{\color[wave]{\the\compteur}\global\advance\compteur by 2\line(0,-1){1}}
\multiput(3.5,35)(0,-1){5}{\color[wave]{\the\compteur}\global\advance\compteur by 2\line(0,-1){1}}
\multiput(6.5,40)(0,-1){5}{\color[wave]{\the\compteur}\global\advance\compteur by 2\line(0,-1){1}}
\multiput(9.5,45)(0,-1){10}{\color[wave]{\the\compteur}\global\advance\compteur by 2\line(0,-1){1}}
\multiput(6.5,35)(0,-1){2}{\color[wave]{\the\compteur}\global\advance\compteur by 2\line(0,-1){1}}
\multiput(12.5,38)(0,-1){4}{\color[wave]{\the\compteur}\global\advance\compteur by 2\line(0,-1){1}}
\multiput(15.5,44)(0,-1){10}{\color[wave]{\the\compteur}\global\advance\compteur by 2\line(0,-1){1}}
\multiput(12.5,34)(0,-1){1}{\color[wave]{\the\compteur}\global\advance\compteur by 2\line(0,-1){1}}
\multiput(6.5,33)(0,-1){3}{\color[wave]{\the\compteur}\global\advance\compteur by 2\line(0,-1){1}}
\multiput(3.5,30)(0,-1){10}{\color[wave]{\the\compteur}\global\advance\compteur by 2\line(0,-1){1}}
\multiput(19.5,30)(0,-1){2}{\color[wave]{\the\compteur}\global\advance\compteur by 2\line(0,-1){1}}
\multiput(23.5,43)(0,-1){5}{\color[wave]{\the\compteur}\global\advance\compteur by 2\line(0,-1){1}}
\multiput(27.5,44)(0,-1){6}{\color[wave]{\the\compteur}\global\advance\compteur by 2\line(0,-1){1}}
\multiput(23.5,38)(0,-1){10}{\color[wave]{\the\compteur}\global\advance\compteur by 2\line(0,-1){1}}
\multiput(19.5,28)(0,-1){8}{\color[wave]{\the\compteur}\global\advance\compteur by 2\line(0,-1){1}}
\multiput(3.5,20)(0,-1){5}{\color[wave]{\the\compteur}\global\advance\compteur by 2\line(0,-1){1}}
\multiput(0,15)(0,-1){10}{\color[wave]{\the\compteur}\global\advance\compteur by 2\line(0,-1){1}}
\multiput(31.5,20)(0,-1){3}{\color[wave]{\the\compteur}\global\advance\compteur by 2\line(0,-1){1}}
\multiput(35.5,37)(0,-1){4}{\color[wave]{\the\compteur}\global\advance\compteur by 2\line(0,-1){1}}
\multiput(39.5,43)(0,-1){10}{\color[wave]{\the\compteur}\global\advance\compteur by 2\line(0,-1){1}}
\multiput(35.5,33)(0,-1){16}{\color[wave]{\the\compteur}\global\advance\compteur by 2\line(0,-1){1}}
\multiput(31.5,17)(0,-1){10}{\color[wave]{\the\compteur}\global\advance\compteur by 2\line(0,-1){1}}
\multiput(42.5,17)(0,-1){7}{\color[wave]{\the\compteur}\global\advance\compteur by 2\line(0,-1){1}}
\multiput(46.5,14)(0,-1){4}{\color[wave]{\the\compteur}\global\advance\compteur by 2\line(0,-1){1}}
\multiput(42.5,10)(0,-1){3}{\color[wave]{\the\compteur}\global\advance\compteur by 2\line(0,-1){1}}
\multiput(31.5,7)(0,-1){2}{\color[wave]{\the\compteur}\global\advance\compteur by 2\line(0,-1){1}}
\multiput(0,5)(0,-1){5}{\color[wave]{\the\compteur}\global\advance\compteur by 2\line(0,-1){1}}
\put(0,0){\line(1,0){2}}
\put(0,30){\line(1,0){2}}
\multiput(0,15)(1,0){3}{\line(1,0){0.5}}
\put(2.5,15){\line(1,0){2}}
\put(2.5,35){\line(1,0){2}}
\multiput(3.5,30)(1,0){3}{\line(1,0){0.5}}
\put(5.5,30){\line(1,0){2}}
\put(5.5,40){\line(1,0){2}}
\multiput(6.5,35)(1,0){3}{\line(1,0){0.5}}
\put(8.5,35){\line(1,0){2}}
\put(8.5,45){\line(1,0){2}}
\multiput(6.5,33)(1,0){6}{\line(1,0){0.5}}
\put(11.5,33){\line(1,0){2}}
\put(11.5,38){\line(1,0){2}}
\multiput(12.5,34)(1,0){3}{\line(1,0){0.5}}
\put(14.5,34){\line(1,0){2}}
\put(14.5,44){\line(1,0){2}}
\multiput(3.5,20)(1,0){15}{\line(1,0){0.5}}
\put(18.5,20){\line(1,0){2}}
\put(18.5,30){\line(1,0){2}}
\multiput(19.5,28)(1,0){3}{\line(1,0){0.5}}
\put(22.5,28){\line(1,0){2}}
\put(22.5,43){\line(1,0){2}}
\multiput(23.5,38)(1,0){3}{\line(1,0){0.5}}
\put(26.5,38){\line(1,0){2}}
\put(26.5,44){\line(1,0){2}}
\multiput(0,5)(1,0){31}{\line(1,0){0.5}}
\put(30.5,5){\line(1,0){2}}
\put(30.5,20){\line(1,0){2}}
\multiput(31.5,17)(1,0){3}{\line(1,0){0.5}}
\put(34.5,17){\line(1,0){2}}
\put(34.5,37){\line(1,0){2}}
\multiput(35.5,33)(1,0){3}{\line(1,0){0.5}}
\put(38.5,33){\line(1,0){2}}
\put(38.5,43){\line(1,0){2}}
\multiput(31.5,7)(1,0){10}{\line(1,0){0.5}}
\put(41.5,7){\line(1,0){2}}
\put(41.5,17){\line(1,0){2}}
\multiput(42.5,10)(1,0){3}{\line(1,0){0.5}}
\put(45.5,10){\line(1,0){2}}
\put(45.5,14){\line(1,0){2}}
}
\end{picture}
\unitlength 0.8mm

\vspace{1cm}

\begin{picture}(170,2)(0,0)
\put(0,0){\line(0,1){1}}
\put(-1,1.5){\makebox{0}}
\put(175,0){\line(0,1){1}}
\put(172.5,1.5){\makebox{$\lambda(\mathbb{T})$}}
\newcount\compteur \compteur=400
\multiput(0,0)(1,0){15}{\color[wave]{\the\compteur}\global\advance\compteur by 2\line(1,0){1}}
\multiput(15,0)(1,0){5}{\color[wave]{\the\compteur}\global\advance\compteur by 2\line(1,0){1}}
\multiput(20,0)(1,0){5}{\color[wave]{\the\compteur}\global\advance\compteur by 2\line(1,0){1}}
\multiput(25,0)(1,0){10}{\color[wave]{\the\compteur}\global\advance\compteur by 2\line(1,0){1}}
\multiput(35,0)(1,0){2}{\color[wave]{\the\compteur}\global\advance\compteur by 2\line(1,0){1}}
\multiput(37,0)(1,0){4}{\color[wave]{\the\compteur}\global\advance\compteur by 2\line(1,0){1}}
\multiput(41,0)(1,0){10}{\color[wave]{\the\compteur}\global\advance\compteur by 2\line(1,0){1}}
\multiput(51,0)(1,0){1}{\color[wave]{\the\compteur}\global\advance\compteur by 2\line(1,0){1}}
\multiput(52,0)(1,0){3}{\color[wave]{\the\compteur}\global\advance\compteur by 2\line(1,0){1}}
\multiput(55,0)(1,0){10}{\color[wave]{\the\compteur}\global\advance\compteur by 2\line(1,0){1}}
\multiput(65,0)(1,0){2}{\color[wave]{\the\compteur}\global\advance\compteur by 2\line(1,0){1}}
\multiput(67,0)(1,0){5}{\color[wave]{\the\compteur}\global\advance\compteur by 2\line(1,0){1}}
\multiput(72,0)(1,0){6}{\color[wave]{\the\compteur}\global\advance\compteur by 2\line(1,0){1}}
\multiput(78,0)(1,0){10}{\color[wave]{\the\compteur}\global\advance\compteur by 2\line(1,0){1}}
\multiput(88,0)(1,0){8}{\color[wave]{\the\compteur}\global\advance\compteur by 2\line(1,0){1}}
\multiput(96,0)(1,0){5}{\color[wave]{\the\compteur}\global\advance\compteur by 2\line(1,0){1}}
\multiput(101,0)(1,0){10}{\color[wave]{\the\compteur}\global\advance\compteur by 2\line(1,0){1}}
\multiput(111,0)(1,0){3}{\color[wave]{\the\compteur}\global\advance\compteur by 2\line(1,0){1}}
\multiput(114,0)(1,0){4}{\color[wave]{\the\compteur}\global\advance\compteur by 2\line(1,0){1}}
\multiput(118,0)(1,0){10}{\color[wave]{\the\compteur}\global\advance\compteur by 2\line(1,0){1}}
\multiput(128,0)(1,0){16}{\color[wave]{\the\compteur}\global\advance\compteur by 2\line(1,0){1}}
\multiput(144,0)(1,0){10}{\color[wave]{\the\compteur}\global\advance\compteur by 2\line(1,0){1}}
\multiput(154,0)(1,0){7}{\color[wave]{\the\compteur}\global\advance\compteur by 2\line(1,0){1}}
\multiput(161,0)(1,0){4}{\color[wave]{\the\compteur}\global\advance\compteur by 2\line(1,0){1}}
\multiput(165,0)(1,0){3}{\color[wave]{\the\compteur}\global\advance\compteur by 2\line(1,0){1}}
\multiput(168,0)(1,0){2}{\color[wave]{\the\compteur}\global\advance\compteur by 2\line(1,0){1}}
\multiput(170,0)(1,0){5}{\color[wave]{\the\compteur}\global\advance\compteur by 2\line(1,0){1}}
\end{picture}
\end{center}
\caption{A graphical representation of the exploration process. The one-to-one correspondence between $[0,\lambda(\mathbb{T})]$ and $\mathbb{T}$ is represented by corresponding colors. }
\label{fig:explorationProcess}
\end{figure}

From $(\mathcal{E}_{s}(\mathbb{T}))_{ s\in[0,\mathcal{L}(\mathbb{T})]}$ the contour process is now defined by 
\[
\mathcal{C}_{s}(\mathbb{T})=P_{\mathbb{R}_{+}}(\mathcal{E}_{s}(\mathbb{T})) \mathds{1}_{s\leq \mathcal{L}(\mathbb{T})},\quad s\in \mathbb{R}_{+}.
\]
This means that the contour process is the height in the tree of the exploration process at a given time until the exploration process hits the point $(\emptyset,0)$ at time $\mathcal{L}(\mathbb{T})$. After time $\mathcal{L}(\mathbb{T})$, the contour remains equal to $0$.

\begin{rem}[Truncated tree]
	\label{rem:trunc}
In the following, we consider trees $\mathbb{T}$ with infinite total length (i.e.\ $\mathcal{L}(\mathbb{T})=\infty)$). For such tree, one cannot define properly the contour process. To avoid this technicality, we consider truncated trees above some threshold associated to tree $\mathbb{T}$. More precisely, for some time $T$, the truncated tree $\mathbb{T}^T$ of $\mathbb{T}$ above level $T$ is defined by
\[
(\sigma,s)\in\mathbb{T}^{T}\Leftrightarrow \left((\sigma,s)\in\mathbb{T} \text{ and } s\leq T \right).
\]
	\end{rem}

An remarkable observation is that the time-inhomogeneity of the tree become a space in-homogeneity for the contour process (see Figure \ref{fig:coloredContour}). Hence, the process remains time-homogeneous but is not a L\'evy  process, as in the case of splitting trees, due to its space-inhomogeneity.
\begin{figure}[ht]
\unitlength 1.3mm 
\linethickness{0.8pt}
\unitlength 1.4mm

\begin{picture}(250,50)(-35,0)
\put(0,0){\vector(0,1){45}}
\put(3,0){
\newcounter{step}\setcounter{step}{2}
\newcounter{height}\setcounter{height}{30}
\newcounter{base}\setcounter{base}{550}
\newcounter{compteur}
\setcounter{height}{30}
\setcounter{compteur}{\value{base}-\value{step}*\value{height}}
\multiput(0,30)(0,-1){15}{\color[wave]{\thecompteur}\addtocounter{compteur}{\thestep}\line(0,-1){1}}
\setcounter{height}{35}
\setcounter{compteur}{\value{base}-\value{step}*\value{height}}
\multiput(3.5,35)(0,-1){5}{\color[wave]{\thecompteur}\addtocounter{compteur}{\thestep}\line(0,-1){1}}
\setcounter{height}{40}
\setcounter{compteur}{\value{base}-\value{step}*\value{height}}
\multiput(6.5,40)(0,-1){5}{\color[wave]{\thecompteur}\addtocounter{compteur}{\thestep}\line(0,-1){1}}
\setcounter{height}{45}
\setcounter{compteur}{\value{base}-\value{step}*\value{height}}
\multiput(9.5,45)(0,-1){10}{\color[wave]{\thecompteur}\addtocounter{compteur}{\thestep}\line(0,-1){1}}
\setcounter{height}{35}
\setcounter{compteur}{\value{base}-\value{step}*\value{height}}
\multiput(6.5,35)(0,-1){2}{\color[wave]{\thecompteur}\addtocounter{compteur}{\thestep}\line(0,-1){1}}
\setcounter{height}{38}
\setcounter{compteur}{\value{base}-\value{step}*\value{height}}
\multiput(12.5,38)(0,-1){4}{\color[wave]{\thecompteur}\addtocounter{compteur}{\thestep}\line(0,-1){1}}
\setcounter{height}{44}
\setcounter{compteur}{\value{base}-\value{step}*\value{height}}
\multiput(15.5,44)(0,-1){10}{\color[wave]{\thecompteur}\addtocounter{compteur}{\thestep}\line(0,-1){1}}
\setcounter{height}{34}
\setcounter{compteur}{\value{base}-\value{step}*\value{height}}
\multiput(12.5,34)(0,-1){1}{\color[wave]{\thecompteur}\addtocounter{compteur}{\thestep}\line(0,-1){1}}
\setcounter{height}{33}
\setcounter{compteur}{\value{base}-\value{step}*\value{height}}
\multiput(6.5,33)(0,-1){3}{\color[wave]{\thecompteur}\addtocounter{compteur}{\thestep}\line(0,-1){1}}
\setcounter{height}{30}
\setcounter{compteur}{\value{base}-\value{step}*\value{height}}
\multiput(3.5,30)(0,-1){10}{\color[wave]{\thecompteur}\addtocounter{compteur}{\thestep}\line(0,-1){1}}
\setcounter{height}{30}
\setcounter{compteur}{\value{base}-\value{step}*\value{height}}
\multiput(19.5,30)(0,-1){2}{\color[wave]{\thecompteur}\addtocounter{compteur}{\thestep}\line(0,-1){1}}
\setcounter{height}{43}
\setcounter{compteur}{\value{base}-\value{step}*\value{height}}
\multiput(23.5,43)(0,-1){5}{\color[wave]{\thecompteur}\addtocounter{compteur}{\thestep}\line(0,-1){1}}
\setcounter{height}{44}
\setcounter{compteur}{\value{base}-\value{step}*\value{height}}
\multiput(27.5,44)(0,-1){6}{\color[wave]{\thecompteur}\addtocounter{compteur}{\thestep}\line(0,-1){1}}
\setcounter{height}{38}
\setcounter{compteur}{\value{base}-\value{step}*\value{height}}
\multiput(23.5,38)(0,-1){10}{\color[wave]{\thecompteur}\addtocounter{compteur}{\thestep}\line(0,-1){1}}
\setcounter{height}{28}
\setcounter{compteur}{\value{base}-\value{step}*\value{height}}
\multiput(19.5,28)(0,-1){8}{\color[wave]{\thecompteur}\addtocounter{compteur}{\thestep}\line(0,-1){1}}
\setcounter{height}{20}
\setcounter{compteur}{\value{base}-\value{step}*\value{height}}
\multiput(3.5,20)(0,-1){5}{\color[wave]{\thecompteur}\addtocounter{compteur}{\thestep}\line(0,-1){1}}
\setcounter{height}{15}
\setcounter{compteur}{\value{base}-\value{step}*\value{height}}
\multiput(0,15)(0,-1){10}{\color[wave]{\thecompteur}\addtocounter{compteur}{\thestep}\line(0,-1){1}}
\setcounter{height}{20}
\setcounter{compteur}{\value{base}-\value{step}*\value{height}}
\multiput(31.5,20)(0,-1){3}{\color[wave]{\thecompteur}\addtocounter{compteur}{\thestep}\line(0,-1){1}}
\setcounter{height}{37}
\setcounter{compteur}{\value{base}-\value{step}*\value{height}}
\multiput(35.5,37)(0,-1){4}{\color[wave]{\thecompteur}\addtocounter{compteur}{\thestep}\line(0,-1){1}}
\setcounter{height}{43}
\setcounter{compteur}{\value{base}-\value{step}*\value{height}}
\multiput(39.5,43)(0,-1){10}{\color[wave]{\thecompteur}\addtocounter{compteur}{\thestep}\line(0,-1){1}}
\setcounter{height}{33}
\setcounter{compteur}{\value{base}-\value{step}*\value{height}}
\multiput(35.5,33)(0,-1){16}{\color[wave]{\thecompteur}\addtocounter{compteur}{\thestep}\line(0,-1){1}}
\setcounter{height}{17}
\setcounter{compteur}{\value{base}-\value{step}*\value{height}}
\multiput(31.5,17)(0,-1){10}{\color[wave]{\thecompteur}\addtocounter{compteur}{\thestep}\line(0,-1){1}}
\setcounter{height}{17}
\setcounter{compteur}{\value{base}-\value{step}*\value{height}}
\multiput(42.5,17)(0,-1){7}{\color[wave]{\thecompteur}\addtocounter{compteur}{\thestep}\line(0,-1){1}}
\setcounter{height}{14}
\setcounter{compteur}{\value{base}-\value{step}*\value{height}}
\multiput(46.5,14)(0,-1){4}{\color[wave]{\thecompteur}\addtocounter{compteur}{\thestep}\line(0,-1){1}}
\setcounter{height}{10}
\setcounter{compteur}{\value{base}-\value{step}*\value{height}}
\multiput(42.5,10)(0,-1){3}{\color[wave]{\thecompteur}\addtocounter{compteur}{\thestep}\line(0,-1){1}}
\setcounter{height}{7}
\setcounter{compteur}{\value{base}-\value{step}*\value{height}}
\multiput(31.5,7)(0,-1){2}{\color[wave]{\thecompteur}\addtocounter{compteur}{\thestep}\line(0,-1){1}}
\setcounter{height}{5}
\setcounter{compteur}{\value{base}-\value{step}*\value{height}}
\multiput(0,5)(0,-1){5}{\color[wave]{\thecompteur}\addtocounter{compteur}{\thestep}\line(0,-1){1}}
\put(0,0){\line(1,0){2}}
\put(0,30){\line(1,0){2}}
\multiput(0,15)(1,0){3}{\line(1,0){0.5}}
\put(2.5,15){\line(1,0){2}}
\put(2.5,35){\line(1,0){2}}
\multiput(3.5,30)(1,0){3}{\line(1,0){0.5}}
\put(5.5,30){\line(1,0){2}}
\put(5.5,40){\line(1,0){2}}
\multiput(6.5,35)(1,0){3}{\line(1,0){0.5}}
\put(8.5,35){\line(1,0){2}}
\put(8.5,45){\line(1,0){2}}
\multiput(6.5,33)(1,0){6}{\line(1,0){0.5}}
\put(11.5,33){\line(1,0){2}}
\put(11.5,38){\line(1,0){2}}
\multiput(12.5,34)(1,0){3}{\line(1,0){0.5}}
\put(14.5,34){\line(1,0){2}}
\put(14.5,44){\line(1,0){2}}
\multiput(3.5,20)(1,0){15}{\line(1,0){0.5}}
\put(18.5,20){\line(1,0){2}}
\put(18.5,30){\line(1,0){2}}
\multiput(19.5,28)(1,0){3}{\line(1,0){0.5}}
\put(22.5,28){\line(1,0){2}}
\put(22.5,43){\line(1,0){2}}
\multiput(23.5,38)(1,0){3}{\line(1,0){0.5}}
\put(26.5,38){\line(1,0){2}}
\put(26.5,44){\line(1,0){2}}
\multiput(0,5)(1,0){31}{\line(1,0){0.5}}
\put(30.5,5){\line(1,0){2}}
\put(30.5,20){\line(1,0){2}}
\multiput(31.5,17)(1,0){3}{\line(1,0){0.5}}
\put(34.5,17){\line(1,0){2}}
\put(34.5,37){\line(1,0){2}}
\multiput(35.5,33)(1,0){3}{\line(1,0){0.5}}
\put(38.5,33){\line(1,0){2}}
\put(38.5,43){\line(1,0){2}}
\multiput(31.5,7)(1,0){10}{\line(1,0){0.5}}
\put(41.5,7){\line(1,0){2}}
\put(41.5,17){\line(1,0){2}}
\multiput(42.5,10)(1,0){3}{\line(1,0){0.5}}
\put(45.5,10){\line(1,0){2}}
\put(45.5,14){\line(1,0){2}}
}
\end{picture}

\unitlength 0.9mm
\vspace{1cm}
\centering
\begin{picture}(175,45)
\put(20,0){\vector(1,0){157}}
\put(0,15){\line(0,1){15}}

\put(0,0){\line(1,0){15}}

\put(15,0){\line(1,0){5}}

\put(20,0){\line(1,0){5}}

\put(25,0){\line(1,0){10}}

\put(35,0){\line(1,0){2}}
\put(0,0){\line(0,1){15}}
\put(0,30){\vector(0,1){15}}
\setcounter{height}{30}
\setcounter{compteur}{\value{base}-\value{step}*\value{height}}
\multiput(0,30)(1,-1){15}{\color[wave]{\thecompteur}\addtocounter{compteur}{\thestep}\line(1,-1){1}} 
\setcounter{height}{35}
\setcounter{compteur}{\value{base}-\value{step}*\value{height}}
\multiput(15,15)(0,1){20}{\line(0,1){0.1}}
\multiput(15,35)(1,-1){5}{\color[wave]{\thecompteur}\addtocounter{compteur}{\thestep}\line(1,-1){1}}
\setcounter{height}{30}
\setcounter{compteur}{\value{base}-\value{step}*\value{height}}
\multiput(20,30)(0,1){10}{\line(0,1){0.1}}
\setcounter{height}{40}
\setcounter{compteur}{\value{base}-\value{step}*\value{height}}
\multiput(20,40)(1,-1){5}{\color[wave]{\thecompteur}\addtocounter{compteur}{\thestep}\line(1,-1){1}} 

\multiput(25,35)(0,1){10}{\line(0,1){0.1}}
\setcounter{height}{45}
\setcounter{compteur}{\value{base}-\value{step}*\value{height}}
\multiput(25,45)(1,-1){10}{\color[wave]{\thecompteur}\addtocounter{compteur}{\thestep}\line(1,-1){1}} 
\setcounter{height}{35}
\setcounter{compteur}{\value{base}-\value{step}*\value{height}}
\multiput(35,35)(1,-1){2}{\color[wave]{\thecompteur}\addtocounter{compteur}{\thestep}\line(1,-1){1}}
\setcounter{height}{33}
\setcounter{compteur}{\value{base}-\value{step}*\value{height}}
\multiput(37,33)(0,1){5}{\line(0,1){0.1}}
\setcounter{height}{38}
\setcounter{compteur}{\value{base}-\value{step}*\value{height}}
\multiput(37,38)(1,-1){4}{\color[wave]{\thecompteur}\addtocounter{compteur}{\thestep}\line(1,-1){1}}
\setcounter{height}{44}
\setcounter{compteur}{\value{base}-\value{step}*\value{height}}
\multiput(41,34)(0,1){10}{\line(0,1){0.1}}
\multiput(41,44)(1,-1){10}{\color[wave]{\thecompteur}\addtocounter{compteur}{\thestep}\line(1,-1){1}}
\setcounter{height}{34}
\setcounter{compteur}{\value{base}-\value{step}*\value{height}}
\multiput(51,34)(1,-1){1}{\color[wave]{\thecompteur}\addtocounter{compteur}{\thestep}\line(1,-1){1}}
\setcounter{height}{33}
\setcounter{compteur}{\value{base}-\value{step}*\value{height}}
\multiput(52,33)(1,-1){3}{\color[wave]{\thecompteur}\addtocounter{compteur}{\thestep}\line(1,-1){1}}
\setcounter{height}{30}
\setcounter{compteur}{\value{base}-\value{step}*\value{height}}
\multiput(55,30)(1,-1){10}{\color[wave]{\thecompteur}\addtocounter{compteur}{\thestep}\line(1,-1){1}}
\setcounter{height}{30}
\setcounter{compteur}{\value{base}-\value{step}*\value{height}}
\multiput(65,20)(0,1){10}{\line(0,1){0.1}}
\multiput(65,30)(1,-1){2}{\color[wave]{\thecompteur}\addtocounter{compteur}{\thestep}\line(1,-1){1}}
\setcounter{height}{43}
\setcounter{compteur}{\value{base}-\value{step}*\value{height}}
\multiput(67,28)(0,1){15}{\line(0,1){0.1}}
\multiput(67,43)(1,-1){5}{\color[wave]{\thecompteur}\addtocounter{compteur}{\thestep}\line(1,-1){1}}
\setcounter{height}{44}
\setcounter{compteur}{\value{base}-\value{step}*\value{height}}
\multiput(72,38)(0,1){6}{\line(0,1){0.1}}
\multiput(72,44)(1,-1){6}{\color[wave]{\thecompteur}\addtocounter{compteur}{\thestep}\line(1,-1){1}}
\setcounter{height}{38}
\setcounter{compteur}{\value{base}-\value{step}*\value{height}}
\multiput(78,38)(1,-1){10}{\color[wave]{\thecompteur}\addtocounter{compteur}{\thestep}\line(1,-1){1}}
\setcounter{height}{28}
\setcounter{compteur}{\value{base}-\value{step}*\value{height}}
\multiput(88,28)(1,-1){8}{\color[wave]{\thecompteur}\addtocounter{compteur}{\thestep}\line(1,-1){1}}
\setcounter{height}{20}
\setcounter{compteur}{\value{base}-\value{step}*\value{height}}
\multiput(96,20)(1,-1){5}{\color[wave]{\thecompteur}\addtocounter{compteur}{\thestep}\line(1,-1){1}}
\setcounter{height}{15}
\setcounter{compteur}{\value{base}-\value{step}*\value{height}}
\multiput(101,15)(1,-1){10}{\color[wave]{\thecompteur}\addtocounter{compteur}{\thestep}\line(1,-1){1}}
\setcounter{height}{20}
\setcounter{compteur}{\value{base}-\value{step}*\value{height}}
\multiput(111,5)(0,1){15}{\line(0,1){0.1}}
\multiput(111,20)(1,-1){3}{\color[wave]{\thecompteur}\addtocounter{compteur}{\thestep}\line(1,-1){1}}
\setcounter{height}{37}
\setcounter{compteur}{\value{base}-\value{step}*\value{height}}
\multiput(114,17)(0,1){20}{\line(0,1){0.1}}
\multiput(114,37)(1,-1){4}{\color[wave]{\thecompteur}\addtocounter{compteur}{\thestep}\line(1,-1){1}}
\setcounter{height}{43}
\setcounter{compteur}{\value{base}-\value{step}*\value{height}}
\multiput(118,33)(0,1){10}{\line(0,1){0.1}}
\multiput(118,43)(1,-1){10}{\color[wave]{\thecompteur}\addtocounter{compteur}{\thestep}\line(1,-1){1}}
\setcounter{height}{33}
\setcounter{compteur}{\value{base}-\value{step}*\value{height}}
\multiput(128,33)(1,-1){16}{\color[wave]{\thecompteur}\addtocounter{compteur}{\thestep}\line(1,-1){1}}
\setcounter{height}{17}
\setcounter{compteur}{\value{base}-\value{step}*\value{height}}
\multiput(144,17)(1,-1){10}{\color[wave]{\thecompteur}\addtocounter{compteur}{\thestep}\line(1,-1){1}}
\setcounter{height}{17}
\setcounter{compteur}{\value{base}-\value{step}*\value{height}}
\multiput(154,7)(0,1){10}{\line(0,1){0.1}}
\multiput(154,17)(1,-1){7}{\color[wave]{\thecompteur}\addtocounter{compteur}{\thestep}\line(1,-1){1}}
\setcounter{height}{14}
\setcounter{compteur}{\value{base}-\value{step}*\value{height}}
\multiput(161,10)(0,1){4}{\line(0,1){0.1}}
\multiput(161,14)(1,-1){4}{\color[wave]{\thecompteur}\addtocounter{compteur}{\thestep}\line(1,-1){1}}
\setcounter{height}{10}
\setcounter{compteur}{\value{base}-\value{step}*\value{height}}
\multiput(165,10)(1,-1){3}{\color[wave]{\thecompteur}\addtocounter{compteur}{\thestep}\line(1,-1){1}}
\setcounter{height}{7}
\setcounter{compteur}{\value{base}-\value{step}*\value{height}}
\multiput(168,7)(1,-1){2}{\color[wave]{\thecompteur}\addtocounter{compteur}{\thestep}\line(1,-1){1}}
\setcounter{height}{5}
\setcounter{compteur}{\value{base}-\value{step}*\value{height}}
\multiput(170,5)(1,-1){5}{\color[wave]{\thecompteur}\addtocounter{compteur}{\thestep}\line(1,-1){1}}

\end{picture}
\caption{IST and its contour with varying birth-rate. The color on both contour and tree denotes the variation of the birth-rate.}
\label{fig:coloredContour}
\end{figure}
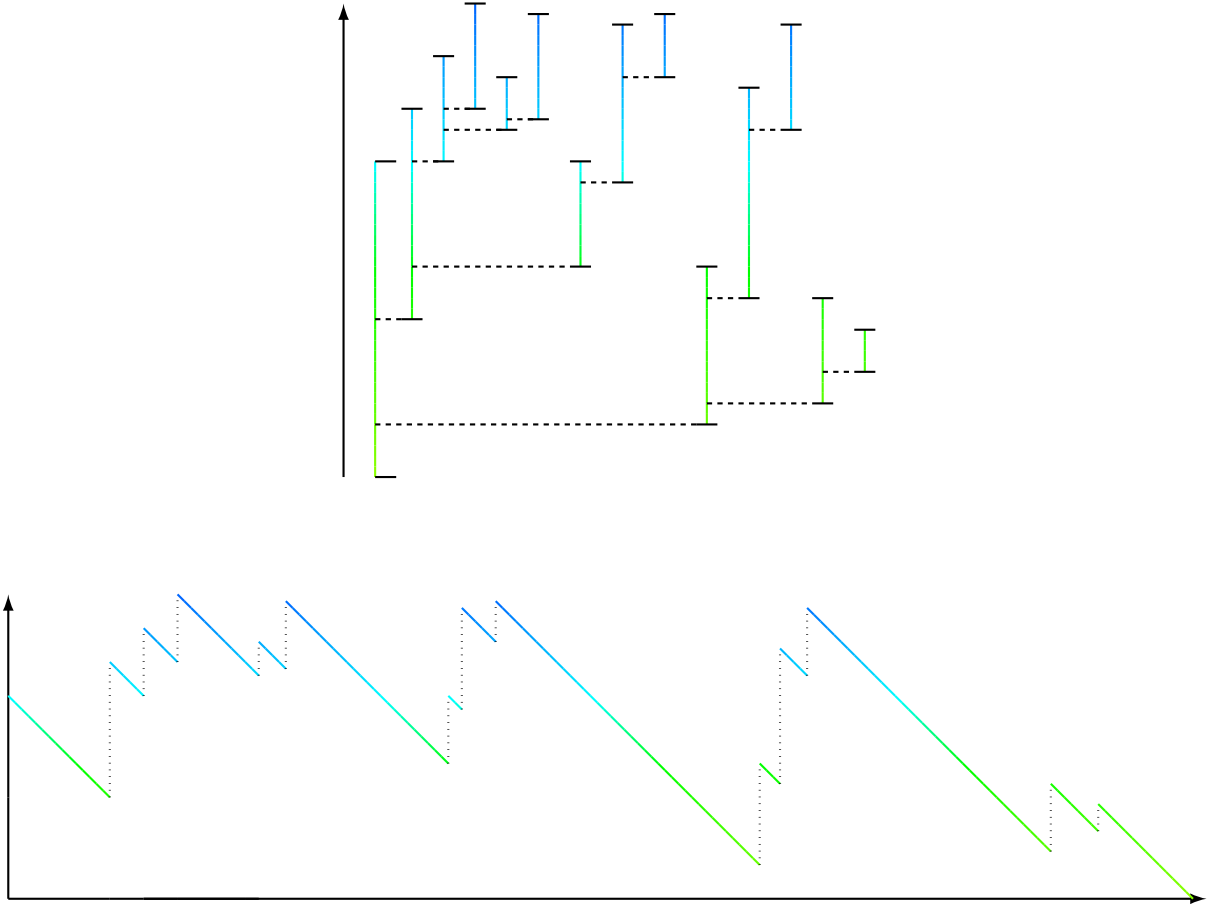

\subsection{Markov and Feller properties}
\label{ssec:markov}

The purpose of this section is to study the law of the contour process of an IST.
So let $\mathbb{T}$ be an IST with birth-rate $b$ and birth-kernel $K$. To study the contour process of $\mathbb{T}$, we need to introduce a modification of the tree $\mathbb{T}$ called contracted tree. This modification is useful in order to understand dependencies in the tree. Let $(\sigma,s)$ be an element of $\mathbb{T}$, the contracted tree $\mathbb{T}_{(\sigma,s)}$ of $\mathbb{T}$ at point $(\sigma,t)$ is defined by (see also Figure \ref{fig:contracted})
\[
\forall (\delta,t)\in \mathbb{T},\, \left\{(\delta,t)\in\mathbb{T}_{(\sigma,s)} \Leftrightarrow 
\exists n\in\mathbb{N}, (\sigma^{n}.\delta,t)\in\mathbb{T} \text{ and } B_{\sigma^{n}.\delta^{1}}\leq B_{\sigma^{n+1}}\wedge s \right\}. 
\]
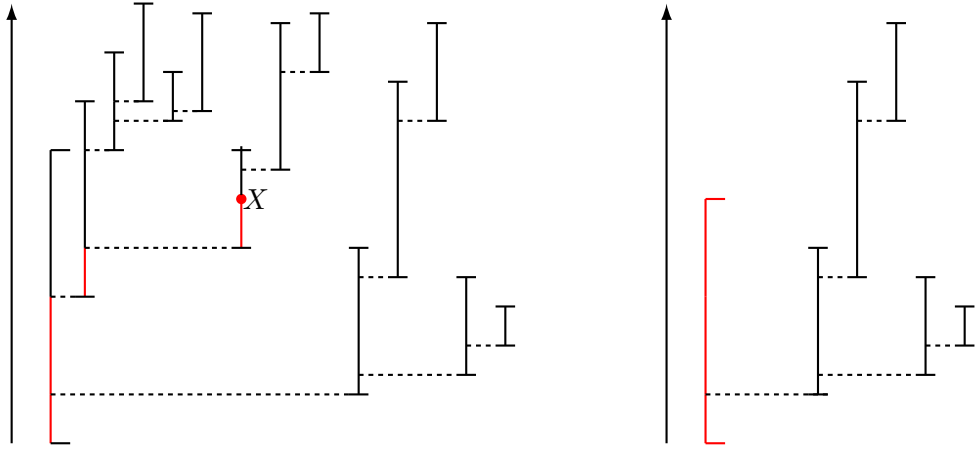
\begin{figure}[ht]
	\unitlength 1.3mm 
	\linethickness{0.8pt}
	
	\begin{picture}(80,45)(-15,0)
\put(0,0){\vector(0,1){45}}
\put(3,0){
\color{black}
\color{red}
\put(0,0){\line(0,1){5}}
\put(0,5){\line(0,1){10}}
\color{black}
\put(0,15){\line(0,1){15}}

\color{black}
\put(0,0){\line(1,0){2}}

\put(0,30){\line(1,0){2}}
\multiput(0,15)(1,0){3}{\line(1,0){0.5}}
\color{red}
\put(3.5,15){\line(0,1){5}}
\color{black}
\put(3.5,20){\line(0,1){10}}

\put(3.5,30){\line(0,1){5}}
\color{black}
\put(2.5,15){\line(1,0){2}}
\put(2.5,35){\line(1,0){2}}
\multiput(3.5,30)(1,0){3}{\line(1,0){0.5}}

\put(6.5,30){\line(0,1){3}}

\put(6.5,33){\line(0,1){2}}

\put(6.5,35){\line(0,1){5}}
\color{black}
\put(5.5,30){\line(1,0){2}}
\put(5.5,40){\line(1,0){2}}
\multiput(6.5,35)(1,0){3}{\line(1,0){0.5}}

\put(9.5,35){\line(0,1){10}}
\color{black}
\put(8.5,35){\line(1,0){2}}
\put(8.5,45){\line(1,0){2}}
\multiput(6.5,33)(1,0){6}{\line(1,0){0.5}}
\put(12.5,33){\line(0,1){5}}
\put(11.5,33){\line(1,0){2}}
\put(11.5,38){\line(1,0){2}}
\multiput(12.5,34)(1,0){3}{\line(1,0){0.5}}
\put(15.5,34){\line(0,1){10}}
\put(14.5,34){\line(1,0){2}}
\put(14.5,44){\line(1,0){2}}

\multiput(3.5,20)(1,0){15}{\line(1,0){0.5}}
\put(21,25){\makebox(0,0)[cc]{$X$}}
\color{red}
\put(19.5,25){\circle*{1.061}}
\put(19.5,20){\line(0,1){4.5}}
\color{black}
\put(19.5,25.4){\line(0,1){5}}
\put(18.5,20){\line(1,0){2}}
\put(18.5,30){\line(1,0){2}}

\multiput(19.5,28)(1,0){3}{\line(1,0){0.5}}
\put(23.5,28){\line(0,1){15}}
\put(22.5,28){\line(1,0){2}}
\put(22.5,43){\line(1,0){2}}
\multiput(23.5,38)(1,0){3}{\line(1,0){0.5}}
\put(27.5,38){\line(0,1){6}}
\put(26.5,38){\line(1,0){2}}
\put(26.5,44){\line(1,0){2}}
\multiput(0,5)(1,0){31}{\line(1,0){0.5}}
\put(31.5,5){\line(0,1){15}}
\put(30.5,5){\line(1,0){2}}
\put(30.5,20){\line(1,0){2}}
\multiput(31.5,17)(1,0){3}{\line(1,0){0.5}}
\put(35.5,17){\line(0,1){20}}
\put(34.5,17){\line(1,0){2}}
\put(34.5,37){\line(1,0){2}}
\multiput(35.5,33)(1,0){3}{\line(1,0){0.5}}
\put(39.5,33){\line(0,1){10}}
\put(38.5,33){\line(1,0){2}}
\put(38.5,43){\line(1,0){2}}
\multiput(31.5,7)(1,0){10}{\line(1,0){0.5}}
\put(42.5,7){\line(0,1){10}}
\put(41.5,7){\line(1,0){2}}
\put(41.5,17){\line(1,0){2}}
\multiput(42.5,10)(1,0){3}{\line(1,0){0.5}}
\put(46.5,10){\line(0,1){4}}
\put(45.5,10){\line(1,0){2}}
\put(45.5,14){\line(1,0){2}}
}
\end{picture}
	\begin{picture}(-40,45)(0,0)
\put(0,0){\vector(0,1){45}}
\put(3,0){
\color{black}
\color{red}
\put(0,0){\line(0,1){5}}
\put(0,5){\line(0,1){10}}

\put(0,15){\line(0,1){10}}

\put(0,0){\line(1,0){2}}

\put(0,25){\line(1,0){2}}
\color{black}

\multiput(0,5)(1,0){13}{\line(1,0){0.5}}
\put(-20,0){
\put(31.5,5){\line(0,1){15}}
\put(30.5,5){\line(1,0){2}}
\put(30.5,20){\line(1,0){2}}
\multiput(31.5,17)(1,0){3}{\line(1,0){0.5}}
\put(35.5,17){\line(0,1){20}}
\put(34.5,17){\line(1,0){2}}
\put(34.5,37){\line(1,0){2}}
\multiput(35.5,33)(1,0){3}{\line(1,0){0.5}}
\put(39.5,33){\line(0,1){10}}
\put(38.5,33){\line(1,0){2}}
\put(38.5,43){\line(1,0){2}}
\multiput(31.5,7)(1,0){10}{\line(1,0){0.5}}
\put(42.5,7){\line(0,1){10}}
\put(41.5,7){\line(1,0){2}}
\put(41.5,17){\line(1,0){2}}
\multiput(42.5,10)(1,0){3}{\line(1,0){0.5}}
\put(46.5,10){\line(0,1){4}}
\put(45.5,10){\line(1,0){2}}
\put(45.5,14){\line(1,0){2}}
}}
\end{picture}
	\caption{An original tree $\mathbb{T}$ (left) and its associated contracted tree $\mathbb{T}_{X}$ (right).}
		\label{fig:contracted}
\end{figure}
Our first step is to show that the contour process is a Feller Markov process. To this end, we need the following proposition which use the contracted tree to understand the dependencies in the tree seen from the exploration process.
\begin{prop}
\label{prop:condIndep}
Let $\mathbb{T}$ be an IST with birth-kernel $K$ and birth-rate $b$ satisfying Assumption~\ref{ass:b,K}. Let $X$ be a random variable with value in\, $\mathcal{U}\times\mathbb{R}_{+}$ almost surely in $\mathbb{T}$, and $(\mathcal{G}_{s})_{ s\geq 0}$ be the natural filtration associated to $(\mathcal{C}_{s}(\mathbb{T}))_{ s\geq 0}$. Then, $\mathbb{T}_{X}$ is independent of $\mathcal{G}_{\mathcal{E}^{-1}_{X}(\mathbb{T})}$ conditionally on $P_{\mathbb{R}}(X)$.
\end{prop}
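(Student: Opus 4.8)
The plan is to identify $\mathcal{G}_{\mathcal{E}^{-1}_X(\mathbb{T})}$ with the ``already explored'' (left) part of the tree and the contracted tree $\mathbb{T}_X$ with the ``not yet explored'' (right) part, and then to show, using only the Poisson nature of the construction, that the conditional law of the latter given the former depends on the explored part solely through the current height $P_{\mathbb{R}}(X)$. First I would invoke the inclusion \eqref{eq:inclu}, which says that the points explored up to time $\mathcal{E}^{-1}_X(\mathbb{T})$ are exactly $\{x\in\mathbb{T}\mid x\leq X\}$; hence $\mathcal{G}_{\mathcal{E}^{-1}_X(\mathbb{T})}$ is generated by this left part, and in particular the height $s:=P_{\mathbb{R}}(X)=\mathcal{C}_{\mathcal{E}^{-1}_X(\mathbb{T})}(\mathbb{T})$, together with the spine $\sigma^{0}=\emptyset,\sigma^{1},\dots,\sigma^{n}=\sigma$ of $\sigma:=P_{\mathcal{U}}(X)$ and its branching dates $B_{\sigma^{1}}<\cdots<B_{\sigma^{n}}<s$, is $\mathcal{G}_{\mathcal{E}^{-1}_X(\mathbb{T})}$-measurable.

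The core observation is that reading the tree towards $X$ explores, on each spine ancestor $\sigma^{k}$, its life above the branching height and the subtrees of the children born \emph{after} the spine child, while leaving unexplored the life of $\sigma^{k}$ on $[B_{\sigma^{k}},B_{\sigma^{k+1}}]$ (with the convention $B_{\sigma^{n+1}}=s$) and the children born \emph{before} the spine child; unwinding the definition of $\mathbb{T}_X$ shows that $\mathbb{T}_X$ is built precisely from these unexplored pieces. I would then record the unexplored children of $\sigma^{k}$ as the restriction of $\mathcal{N}_{\sigma^{k}}$ to $(B_{\sigma^{k}},B_{\sigma^{k+1}})$. Three facts about the construction do the work: the restriction property of Poisson random measures (the measure on an interval is independent of the measure on a disjoint interval), the Slivnyak--Mecke theorem / strong Markov property at the jump time $B_{\sigma^{k+1}}$ (conditioning $\mathcal{N}_{\sigma^{k}}$ to have its spine jump at $B_{\sigma^{k+1}}$ does not alter the law of its points on $(B_{\sigma^{k}},B_{\sigma^{k+1}})$), and the independence across the i.i.d.\ family $(\mathcal{N}_{u})_{u}$ together with the independence of the lifetimes. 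Combining them yields that, conditionally on $\mathcal{G}_{\mathcal{E}^{-1}_X(\mathbb{T})}$, the unexplored births on $\sigma^{k}$ form a Poisson process of intensity $b$ on $(B_{\sigma^{k}},B_{\sigma^{k+1}})$, independent across $k$ and independent of the explored subtrees, each carrying an independent fresh IST subtree.

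The final step is the contraction itself. Since the intervals $(B_{\sigma^{0}},B_{\sigma^{1}}),(B_{\sigma^{1}},B_{\sigma^{2}}),\dots,(B_{\sigma^{n}},s)$ are contiguous and tile $(0,s)$, the superposition theorem for Poisson processes shows that the independent unexplored birth processes merge into a single Poisson process of intensity $b$ on $(0,s)$, while the collapsed spine lives on $[0,s]$; thus $\mathbb{T}_X$ has exactly the law of an IST trunk alive on $[0,s]$ carrying a Poisson$(b)$ set of births on $(0,s)$, each spawning an independent fresh subtree. Crucially, this description no longer involves the split heights $B_{\sigma^{k}}$ nor the explored configuration: it depends only on $s=P_{\mathbb{R}}(X)$. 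Since the conditional law of $\mathbb{T}_X$ given $\mathcal{G}_{\mathcal{E}^{-1}_X(\mathbb{T})}$ is therefore a function of $P_{\mathbb{R}}(X)$ alone, $\mathbb{T}_X$ is independent of $\mathcal{G}_{\mathcal{E}^{-1}_X(\mathbb{T})}$ conditionally on $P_{\mathbb{R}}(X)$, as claimed.

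The hard part will be making the conditioning rigorous: the branching heights $B_{\sigma^{k+1}}$ are jump locations of the $\mathcal{N}_{\sigma^{k}}$, so the decoupling of the points before and after a given atom is a Palm/Slivnyak statement rather than a plain independent-increments statement, and moreover the depth $n$ and the split points are themselves random. I would handle this by first conditioning on the explored skeleton and arguing by induction on the generation of $X$, the base case $n=0$ being the root computation above, reducing at each step to a single ancestor where the strong Markov property of the Poisson process at the spine jump applies cleanly. A secondary technical point, to be checked carefully, is that $\mathcal{G}_{\mathcal{E}^{-1}_X(\mathbb{T})}$ is generated \emph{exactly} by the explored-side restrictions of the $\mathcal{N}_{\sigma^{k}}$ and the explored subtrees (and nothing more), so that the unexplored-side restrictions are genuinely conditionally Poisson.
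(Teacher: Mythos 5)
Your proposal is correct, and it rests on the same decomposition as the paper's proof: both use \eqref{eq:inclu} to identify $\mathcal{G}_{\mathcal{E}^{-1}_{X}(\mathbb{T})}$ with the data of the explored set $\{x\in\mathbb{T}\mid x\leq X\}$, both view $\mathbb{T}_{X}$ as a collapsed spine carrying the unexplored births together with fresh independent subtrees, and both let the independence properties of the Poisson measures $(\mathcal{N}_{u})_{u\in\mathcal{U}}$ do the work. The genuine difference is the mechanism by which conditional independence is extracted. You prove it by computing the conditional law of $\mathbb{T}_{X}$ given $\mathcal{G}_{\mathcal{E}^{-1}_{X}(\mathbb{T})}$ --- Poisson of intensity $b$ on each gap $(B_{\sigma^{k}},B_{\sigma^{k+1}})$, merged by superposition into a Poisson process of intensity $b$ on $(0,s)$ --- and observing that this law is a function of $s=P_{\mathbb{R}}(X)$ alone; this is precisely what forces you into Palm--Slivnyak conditioning at the spine jump times and an induction over generations, which you rightly single out as the hard part. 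The paper never computes a conditional law: it writes the birth process of the root of $\mathbb{T}_{X}$ as the superposition $\mathcal{P}(A)=\sum_{v\preceq P_{\mathcal{U}}(X)}\mathcal{N}_{v}\left(A\cap\{t\geq 0\mid (v,t)\preceq X\}\right)$, notes that the spine set $\{x\in\mathbb{T}\mid x\preceq X\}$ is disjoint from the explored set $\{x\in\mathbb{T}\mid x\leq X\}$, and concludes from restriction independence that $\mathcal{P}$ --- together with all the data attached to the non-root individuals of $\mathbb{T}_{X}$, which the exploration never touches --- is unconditionally independent of every $\mathcal{G}_{\mathcal{E}^{-1}_{X}(\mathbb{T})}$-measurable variable; since the only remaining ingredient of $\mathbb{T}_{X}$ is its root lifetime, which equals $P_{\mathbb{R}}(X)$, conditional independence follows at once. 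So the paper's formulation bypasses your hard part entirely, while your route makes explicit what the paper leaves implicit, namely that conditionally on $P_{\mathbb{R}}(X)=s$ the contracted tree is an IST with the same parameters issued from an ancestor of fixed lifetime $s$, which is the form in which the proposition is actually applied in the proof of the Feller property. One caveat: the disjoint supports in the paper's argument are random sets whose boundaries are atoms of the very Poisson measures being restricted, so the rigor issue you flag is equally present, if unacknowledged, in the paper's own proof; your plan at least names it and proposes a remedy.
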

\begin{proof}
In order to lighten notations, let us denote (only in the present proof), for any $x\in\mathbb{T}$,  
\[
\xi_{x}=\xi_{P_{\mathcal{U}}(x)} \text{ and } \mathcal{N}_{x}=\mathcal{N}_{P_{\mathcal{U}}(x)},
\]
where we recall that $\xi_{\sigma}$ and $\mathcal{N}_{\sigma}$ stands respectively for the lifetime and the birth point process of individual $\sigma\in\mathbb{T}$, as defined in Section \ref{sec:model}.

Now, let $Z$ be a random variable measurable with respect to
\[
\sigma\left\{\xi_{\mathcal{E}_{s}(\mathbb{T})},\, \mathcal{N}_{\mathcal{E}_{s}(\mathbb{T})}\left(P_{\mathbb{R}_{+}}(\{x\in\mathbb{T}\mid x\leq X \})\cap \cdot \right) \mid\, s\leq \mathcal{E}^{-1}_{X}(\mathbb{T}) \right\}\supset \mathcal{G}_{\mathcal{E}^{-1}_{X}(\mathbb{T})},
\]
where the inclusion follows from \eqref{eq:inclu}.
Hence, there exists a measurable function $\psi$ such that
\[
Z=\psi\left(\left(\xi_{\mathcal{E}_{s}(\mathbb{T})}\right)_{s\leq \mathcal{E}^{-1}_{X}(\mathbb{T})},\, \left(\mathcal{N}_{\mathcal{E}_{s}(\mathbb{T})}\left(P_{\mathbb{R}_{+}}(\{x\in\mathbb{T}\mid x\leq X \})\cap \cdot \right) \right)_{s\leq\mathcal{E}^{-1}_{X}(\mathbb{T})} \right).
\]

By construction, if $y\in\mathbb{T}\backslash\{x\in\mathbb{T},\, x\leq X \}$ and $P_{\mathcal{U}}(y)\neq P_{\mathcal{U}}(X)$, then $\xi_{y}$ and $\mathcal{N}_{y}$ are independents of $Z$. Consequently, for any $y$ in $\mathbb{T}_{X}\setminus{\{\emptyset \}\times \mathbb{R}_{+}}$, $V_{y}$ and $\mathcal{N}_{y}$ are independents of $Z$. So, it only remains to understand the dependencies between $Z$ and the root of $\mathbb{T}_{X}$.
Now, denote by $\mathcal{P}$ the point process of births associated to the root of $\mathbb{T}_{X}$. By construction, for any measurable set $A$,
\[
\mathcal{P}(A)=\sum_{v\preceq P_{\mathcal{U}}(X)}\mathcal{N}_{v}\left(A\cap\{s\geq 0\mid (v,t)\preceq X \} \right),
\]
where $(v,t)\preceq X $ if and only if $v\preceq P_{\mathcal{U}}(X)$ and $X\leq (v,t)$. In particular, the set $\{x\in\mathbb{T}\mid x\preceq X \}$ corresponds to the path in the tree which link the bottom of the root to point $X$ (see Figure \ref{fig:contracted}).
Now, since $\{x\in\mathbb{T}\mid x\preceq X \}$ and $\{x\in\mathbb{T}\mid x\leq X \}$ are disjoint sets, wet get,
using the independence properties of Poisson random measure, that $\mathcal{P}$ is independent of $Z$. Hence, the dependence between $Z$ and $\mathbb{T}_{X}$ only relies on the lifetime of the root. Finally, since the lifetime of the root of $\mathbb{T}_{X}$ is given by $P_{\mathbb{R}}(X)$, we can conclude that $\mathbb{T}_{X}$ is independent of $Z$ conditionally on $P_{\mathbb{R}}(X)$. This ends the proof.
\end{proof}
We can now show that the contour process of a truncated IST is a Feller process.

\begin{prop}
		Let $\mathbb{T}$ be an IST  with birth-kernel $K$ and birth-rate $b$ satisfying Assumptions \ref{ass:b,K}.  Let $(\mathbb{P}_{x})_{x\in\mathbb{R}_{+}}$ a family of probability measure such that $\mathbb{P}_{x}(\xi_{\emptyset}=x)=1$.  Denote, for any positive real number $T$, $\mathbb{T}^{T}$ the truncated tree above $T$ as defined in Remark \ref{rem:trunc}. Then,  $\left((\mathcal{C}_{s}(\mathbb{T}^{T})_{s\geq 0},\mathbb{P}_{x}\right)$ is a Feller process.
\end{prop}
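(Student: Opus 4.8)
The plan is to verify the three defining properties of a Feller process — the time-homogeneous Markov property, strong continuity of the semigroup at $0$, and preservation of continuous functions — in that order. Throughout I would work on the truncated tree $\mathbb{T}^{T}$ (Remark~\ref{rem:trunc}), so the contour takes values in the compact set $[0,T]$ and, by local boundedness of $b$ (Assumption~\ref{ass:b,K}), the birth rate is bounded there, $b\le B:=\sup_{[0,T]}b<\infty$; this uniform bound is exactly what the truncation buys and what makes the estimates below uniform in the starting point. I would first note $\mathcal{L}(\mathbb{T}^{T})<\infty$ almost surely — the population up to biological time $T$ is dominated by a Yule process of rate $B$, hence non-explosive — so the contour is well defined and is absorbed at $0$ once the whole truncated tree is explored.

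\textbf{Markov property.} Fixing a contour time $s$, I would take $X=\mathcal{E}_{s}(\mathbb{T}^{T})$, so $\mathcal{E}^{-1}_{X}(\mathbb{T}^{T})=s$ and $P_{\mathbb{R}}(X)=\mathcal{C}_{s}(\mathbb{T}^{T})$. By \eqref{eq:inclu} the region explored up to time $s$ is the left of $X$, which generates $\mathcal{G}_{s}$, while the future $(\mathcal{C}_{s+u}(\mathbb{T}^{T}))_{u\ge0}$ is a measurable functional of the unexplored region, i.e.\ of the ($T$-truncated) contracted tree $\mathbb{T}_{X}$ read from its root. Proposition~\ref{prop:condIndep} then gives that $\mathbb{T}_{X}$ is independent of $\mathcal{G}_{s}$ conditionally on $P_{\mathbb{R}}(X)$, so the conditional law of the future given $\mathcal{G}_{s}$ depends on $\mathcal{C}_{s}$ alone. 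To obtain time-homogeneity I would identify that law: conditionally on $P_{\mathbb{R}}(X)=x$ the root of $\mathbb{T}_{X}$ has lifetime $x$, and its birth process is the superposition, along the successive segments of the ancestral line of $X$, of the independent Poisson measures $\mathcal{N}_{\sigma^{n}}$ restricted to the disjoint height-intervals $(B_{\sigma^{n}},B_{\sigma^{n+1}})$ that tile $(0,x]$; being a superposition of independent $b(t)\,dt$-Poisson measures over a partition of $(0,x]$, it is itself Poisson of intensity $b(t)\,dt$ on $(0,x]$, with offspring lifetimes drawn from $K(t,\cdot)$. Hence $\mathbb{T}_{X}$ given $P_{\mathbb{R}}(X)=x$ is an IST under $\mathbb{P}_{x}$, the future contour has the law of $(\mathcal{C}_{u}(\mathbb{T}^{T}))_{u}$ under $\mathbb{P}_{x}$, and the Markov property holds with semigroup $P_{u}f(x)=\mathbb{E}_{x}[f(\mathcal{C}_{u}(\mathbb{T}^{T}))]$.

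\textbf{Strong continuity.} For $f\in C([0,T])$, on the event that no birth occurs during the first $u$ units of exploration — of probability at least $e^{-Bu}$ uniformly in $x$ — one has $\mathcal{C}_{u}=(x-u)^{+}$, so $|f(\mathcal{C}_{u})-f(x)|\le\omega_{f}(u)$ with $\omega_{f}$ the modulus of continuity of $f$, while on the complement $|f(\mathcal{C}_{u})-f(x)|\le 2\|f\|_{\infty}$. Therefore
\[
\|P_{u}f-f\|_{\infty}\ \le\ \omega_{f}(u)+2\|f\|_{\infty}\bigl(1-e^{-Bu}\bigr)\ \xrightarrow[u\to0]{}\ 0 .
\]

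\textbf{Preservation of $C([0,T])$.} To compare the process from two nearby heights I would let the higher one descend to the lower without jumping. For $x<x'$ set $\delta=x'-x$ and let $A$ be the event that no birth is met before the contour started at $x'$ reaches $x$; since the contour descends at unit speed and jumps at rate $b(\cdot)$, $\mathbb{P}_{x'}(A)=\exp\!\bigl(-\int_{x}^{x'}b(y)\,dy\bigr)=:p$, and on $A$ one has $\mathcal{C}_{\delta}=x$ with $A\in\mathcal{G}_{\delta}$. The Markov property at time $\delta$ gives, for $u\ge\delta$, $P_{u}f(x')=p\,P_{u-\delta}f(x)+\mathbb{E}_{x'}[f(\mathcal{C}_{u})\mathbf{1}_{A^{c}}]$, whence $|P_{u}f(x')-P_{u-\delta}f(x)|\le 2(1-p)\|f\|_{\infty}$. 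Combining this with the semigroup contraction $|P_{u-\delta}f(x)-P_{u}f(x)|=|P_{u-\delta}(f-P_{\delta}f)(x)|\le\|P_{\delta}f-f\|_{\infty}$ yields
\[
|P_{u}f(x')-P_{u}f(x)|\ \le\ 2(1-p)\|f\|_{\infty}+\|P_{\delta}f-f\|_{\infty}\ \xrightarrow[x'\downarrow x]{}\ 0 ,
\]
using strong continuity for the last term; the symmetric computation (descending from $x$ to a lower $x'$) gives left-continuity, so $x\mapsto P_{u}f(x)$ is continuous and, $[0,T]$ being compact, lies in $C([0,T])$. I expect the Markov step to be the main obstacle, as it rests on the self-similarity of the exploration under contraction together with the conditional independence of Proposition~\ref{prop:condIndep}; once the semigroup is identified, both continuity statements follow from the elementary ``descend without jumping'' estimates, made uniform by the truncation.
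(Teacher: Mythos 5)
Your proof is correct, and its skeleton coincides with the paper's: the Markov property rests on Proposition~\ref{prop:condIndep} applied to $X=\mathcal{E}_{s}(\mathbb{T}^{T})$, and the spatial continuity of $P_u f$ rests on the same coupling (let the contour started at the higher point $x'$ descend to $x$ without jumping, an event of probability $p=\exp(-\int_x^{x'}b)$, which costs $2(1-p)\Vert f\Vert_\infty$ plus a time-shift term). Where you genuinely diverge is in the treatment of that leftover term $|P_{u-\delta}f(x)-P_{u}f(x)|$: the paper controls it by a second application of the Markov property, rewriting $\mathbb{E}_{y}[f(\mathcal{C}_{s-\delta})]$ in terms of $f(\mathcal{C}_{s}+\delta)$ on the no-jump event and concluding by dominated convergence, whereas you first establish strong continuity of the semigroup at $t=0$ (uniformly in the starting point, via the no-jump event and the truncation bound $b\leq B$) and then invoke the contraction identity $\Vert P_{u-\delta}f-P_{u}f\Vert_\infty=\Vert P_{u-\delta}(f-P_{\delta}f)\Vert_\infty\leq\Vert P_{\delta}f-f\Vert_\infty$. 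Your route is cleaner and buys something the paper leaves to standard theory: strong continuity is part of the Feller property, and the paper never proves it explicitly (it later simply asserts the semigroup is strongly continuous). You also make explicit the time-homogeneity hidden in the paper's Markov step, namely that the contracted tree given $P_{\mathbb{R}}(X)=x$ is again an IST under $\mathbb{P}_{x}$; note, though, that your justification (``superposition of independent Poisson measures over a partition of $(0,x]$'') glosses over the fact that the partition points $B_{\sigma^{n+1}}$ are themselves atoms of the very processes being restricted, so a Palm--Slivnyak-type argument is really needed there --- this is, however, the same level of informality as the paper's own proof of Proposition~\ref{prop:condIndep}, which invokes ``the independence properties of Poisson random measures'' in just the same way.
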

\begin{proof}

	We begin the proof by showing that $\mathcal{C}_{s}(\mathbb{T}^{T})_{s\geq 0}$  is a Markov process. To this end let us denote by $\mathcal{F}=(\mathcal{F}_{s})_{s\geq 0}$ the natural filtration associated with $\mathcal{C}_{s}(\mathbb{T}^{T})_{s\geq 0}$ and, as above, $\mathcal{G}^{T}=(\mathcal{G}^{T}_{s})_{s\geq 0}$ the natural filtration of $(\mathcal{E}_{s}(\mathbb{T}^{T}))_{s\geq 0}$. Let $f$ be some bounded measurable function. We have, for any positive real numbers $s$ and $h$,
	\[
	\mathbb{E}\left[f\left(\mathcal{C}_{s+h}\left(\mathbb{T}^{T}\right)\right)\mid \mathcal{G}^{T}_{s} \right]=\mathbb{E}\left[f\left(\mathcal{C}_{h}\left(\mathbb{T}^{T}_{\mathcal{E}_{s}(\mathbb{T}^{T})}\right)\right)\mid \mathcal{G}^{T}_{s} \right].
	\]
	
	However, according to Proposition \ref{prop:condIndep}, $\mathbb{T}^{T}_{\mathcal{E}_{s}(\mathbb{T}^{T})}$ is independent of $\mathcal{G}^{T}_{s}$ conditionally on $P_{\mathbb{R}}(\mathcal{E}_{s}(\mathbb{T}^{T}))=\mathcal{C}_{s}(\mathbb{T}^{T})$. Consequently,
	\[
	\mathbb{E}\left[f\left(\mathcal{C}_{h}\left(\mathbb{T}^{T}_{\mathcal{E}_{s}(\mathbb{T}^{T})}\right)\right)\mid \mathcal{G}^{T}_{s} \right]=\mathbb{E}\left[f\left(\mathcal{C}_{h}\left(\mathbb{T}^{T}_{\mathcal{E}_{s}(\mathbb{T}^{T})}\right)\right)\mid \mathcal{C}_{s}(\mathbb{T}^{T}) \right],
	\]
	which gives the Markov property.
	As a consequence of the Markov property, 
	\[
	f\mapsto \mathbb{E}_{x}\left[f\left(\mathcal{C}_{s}\left(\mathbb{T}^{T}\right)\right)\right]
	\]
	defines a semigroup $(P_{s})_{s\geq 0}$ on the space $\mathcal{B}_{b}([0,T])$ of bounded measurable function on $[0,T]$. To prove the Feller property, it remains to show that the subspace $C([0,T])$ of continuous function on $[0,T]$ is invariant under the action of $P_{s}$, for any $s\geq 0$.
	
	 Let $f\in C([0,T])$ and $y<x\leq t$. Using Markov property and denoting by $J$ the time of first jump of $(\mathcal{C}_{s}(\mathbb{T}^{T}))_{s\geq 0}$, we have, with $s>(x-y)$, that
	\begin{align*}
	\mathbb{E}_{x}\left[f\left(\mathcal{C}_{s}\left(\mathbb{T}^{T}\right)\right) \right]&=\mathbb{E}_{x}\left[f\left(\mathcal{C}_{s}\left(\mathbb{T}^{T}\right)\right)\mathds{1}_{J>x-y} \right]+\mathbb{E}_{x}\left[f\left(\mathcal{C}_{s}\left(\mathbb{T}^{T}\right)\right)\mathds{1}_{J\leq x-y} \right]\\
	&=\mathbb{E}_{y}\left[f\left(\mathcal{C}_{s-(x-y)}\left(\mathbb{T}^{T}\right)\right) \right]\mathbb{P}_{x}\left(J>x-y \right)+\mathbb{E}_{x}\left[f\left(\mathcal{C}_{s}\left(\mathbb{T}^{T}\right)\right)\mathds{1}_{J\leq x-y} \right].\\
	\end{align*}
	Now, by construction
	\[
	\mathbb{P}_{x}(J>x-y)=\mathbb{P}\left(\mathcal{N}_{\emptyset}([x,y])=0\right)=\exp\left(-\int_{y}^{x}b(u)\ du\right).
	\]
	Consequently,
	\begin{align*}
	\left|	\mathbb{E}_{x}\left[f\left(\mathcal{C}_{s}\left(\mathbb{T}^{T}\right)\right) \right]-	\mathbb{E}_{y}\left[f\left(\mathcal{C}_{s}\left(\mathbb{T}^{T}\right)\right) \right]\right|\leq& \left|P_{s-(x-y)}f(y)\exp\left(-\int_{y}^{x}b(u)\ du\right)-P_{s}f(y)\right|\\&+\|f\|_{\infty}\left(1-\exp\left(-\int_{y}^{x}b(u)\ du\right) \right)\\
	\leq&\left|P_{s-(x-y)}f(y)-P_{s}f(y)\right|+2\left(1-\exp\left(-\int_{y}^{x}b(u)\ du\right) \right)\|f\|_{\infty}.
	\end{align*}
	Now, using once again the Markov property, we have that
	\begin{multline*}
	\mathbb{E}_{y}\left[f(\mathcal{C}_{s-(x-y)}(\mathbb{T}^{T})) \right]=\mathbb{E}_{y}\left[f\left(\mathcal{C}_{s}(\mathbb{T}^{T})+(x-y)\right) \exp\left(-\int_{\mathcal{C}_{s}(\mathbb{T}^{T})}^{\mathcal{C}_{s}(\mathbb{T}^{T})+(x-y)}b(u)\ du\right)  \right]\\
	+\mathbb{E}_{y}\left[f\left(\mathcal{C}_{s-(x-y)}(\mathbb{T}^{T})\right) \left(1-\exp\left(-\int_{\mathcal{C}_{s}(\mathbb{T}^{T})}^{\mathcal{C}_{s}(\mathbb{T}^{T})+(x-y)}b(u)\ du\right)\right)  \right],
	\end{multline*}
	leading to
	\begin{multline*}
	\left|P_{s-(x-y)}f(y)-P_{s}f(y)\right|\\\leq \left|\mathbb{E}_{y}\left[f\left(\mathcal{C}_{s}(\mathbb{T}^{T})\right)-f\left(\mathcal{C}_{s}(\mathbb{T}^{T})+(x-y)\right) \right]\right|+2\mathbb{E}\left[\left(1-\exp\left(-\int_{\mathcal{C}_{s}(\mathbb{T}^{T})}^{\mathcal{C}_{s}(\mathbb{T}^{T})+(x-y)}b(u)\ du\right) \right)\right]\|f\|_{\infty}.
	\end{multline*}
	Finally, we obtain, for any $x>y$ and $s>(x-y)$,
	\begin{multline}
	\label{eq:FelEstim}
\left|	P_{s}f(x)-P_{s}f(y)\right|\\\leq \left|\mathbb{E}_{y}\left[f\left(\mathcal{C}_{s}(\mathbb{T}^{T})\right)-f\left(\mathcal{C}_{s}(\mathbb{T}^{T})+(x-y)\right) \right]\right|+4\left(1-\exp\left(-(x-y)\|b\|_{\infty}\right) \right)\|f\|_{\infty}.
	\end{multline}
 Now, we have thanks to Equation \eqref{eq:FelEstim} and Lebesgue's dominated convergence theorem that
	\[
	\left|	P_{s}f(x)-P_{s}f(y)\right|\xrightarrow[x\to y]{}0,
	\]
	for any $f$ in $C([0,T])$ and any $s\geq 0$. Hence, $P_{s}f$ is continuous on $(0,T]$.
	 So it remains to prove that $P_{s}f$ is continuous at $0$. To this end, let $x$ be a positive real number, we have
	\begin{align*}
	\left|\mathbb{E}_{x}\left[f\left(\mathcal{C}_{s}\left(\mathbb{T}^{T}\right)\right) \right]-f(0)\right|&\leq\left|f(0)\mathbb{P}_{x}(T>x) -f(0)\right|+\|f\|_{\infty}(1-\mathbb{P}_{x}(T>x)).
		\end{align*}
So according to the above computation, this gives the continuity of $P_{s}f$ at point $0$. Hence, the space $C([0,T])$ is invariant under the action of $P_{s}$ for any $s\geq 0$, which gives the Feller property.
\end{proof}
Since the contour process is a Feller process, it induces a strongly continuous semigroup on the space $C([0,T])$ of continuous function on $[0,T]$. Our next goal is to obtain the generator of the process as well as its domain (namely the strong generator on $C([0,T])$, see the appendix for the different notions of generators). Looking at the behavior of the tree and, thus, of the contour, a natural candidate is given by 
\[
L^{(T)}f(x)=-f'(x)+b(x)\int_{\mathbb{R}_{+}}\, (f((x+y)\wedge T)-f(x))\, K(x,dy)
\]
with domain
\[
D(L^{(T)})=\left\{f\in C^{1}([0,T]),\ L^{(T)}f(0)=0 \right\}.
\]
Our strategy relies on two steps: the first step is to show that
\[
\lim\limits_{s\to0}\frac{1}{s}\left(\mathbb{E}_{x}\left[f\left(\mathcal{C}_{s}(\mathbb{T}^{T}) \right) \right]-f(x) \right)= L^{(T)}f,
\]
in $C([0,T])$. But this is not enough to ensure that $(L^{(T)},D(L^{(T)}))$ is the generator of the contour process. Hence, our second step is to show that $(L^{(T)},D(L^{(T)}))$ generates a strongly continuous semigroup on  $C([0,T])$. As consequence, using the maximality property of such operator (see \cite[Lemma 17.12]{Kal}), we obtain that $(L^{(T)},D(L^{(T)}))$ is indeed the generator of the contour. We begin by showing that $(L^{(T)},D(L^{(T)}))$ generate a strongly continuous semigroup.
\begin{prop}
Let $L^{(T)}$ be the operator on $C([0,T])$ defined by
\[
\forall x\geq 0, \quad L^{(T)}f(x)=-f'(x)+b(x)\int_{\mathbb{R}_{+}}\, (f((x+y)\wedge T)-f(x))\, K(x,dy),
\]
for all function $f$ in the domain
\[
D(L^{(T)})=\left\{f\in C^{1}([0,T]),\ L^{(T)}f(0)=0 \right\}.
\]
Then, under Assumption~\ref{ass:b,K}, $(L^{(T)},D(L^{(T)}))$ generate a strongly continuous semigroup on $C([0,T])$.
\end{prop}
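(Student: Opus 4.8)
The plan is to apply the Hille--Yosida theorem in its Lumer--Phillips form: to show that $(L^{(T)},D(L^{(T)}))$ generates a strongly continuous contraction semigroup it suffices to check that (i) $D(L^{(T)})$ is dense in $C([0,T])$, (ii) $L^{(T)}$ is dissipative, and (iii) the range of $\lambda-L^{(T)}$ is all of $C([0,T])$ for some $\lambda>0$. I would moreover obtain the dissipativity from the positive maximum principle, which additionally yields that the semigroup is positive (Feller), so the proof packages both the generation and the good qualitative properties at once.

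For dissipativity I would first establish the positive maximum principle, and this is exactly where the constraint defining $D(L^{(T)})$ enters. Let $f\in D(L^{(T)})$ attain a nonnegative maximum at $x_{0}\in[0,T]$. If $x_{0}\in(0,T)$ then $f'(x_{0})=0$ while $f((x_{0}+y)\wedge T)-f(x_{0})\le 0$, so $L^{(T)}f(x_{0})\le 0$; if $x_{0}=T$ the jump term vanishes since $(T+y)\wedge T=T$, and $f'(T)\ge 0$ for a left-endpoint-of-the-derivative maximum, so $L^{(T)}f(T)=-f'(T)\le 0$; and if $x_{0}=0$ then $L^{(T)}f(0)=0$ by the very definition of the domain. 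Evaluating $\lambda f-L^{(T)}f$ at a point where $|f|$ is maximal then gives $\|\lambda f-L^{(T)}f\|_{\infty}\ge\lambda\|f\|_{\infty}$, i.e.\ dissipativity.

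Density of $D(L^{(T)})$ would be handled by a local perturbation near the origin. Since $C^{1}([0,T])$ is dense in $C([0,T])$, it is enough, given $\tilde f\in C^{1}$, to adjust $\tilde f$ so as to meet the single linear constraint $f'(0)=b(0)\int_{\mathbb{R}_{+}}(f(y\wedge T)-f(0))\,K(0,dy)$ while moving it little in sup norm. I would add a correction $c\,\phi_{\delta}$ with $\phi_{\delta}$ smooth, supported in $[0,\delta]$, $\phi_{\delta}(0)=0$, $\phi_{\delta}'(0)=1$ and $\|\phi_{\delta}\|_{\infty}\le\delta$; the bump shifts the derivative at $0$ by $c$ at the cost of a sup-norm perturbation $\le|c|\delta$, and since $K(0,(0,\delta])\to0$ as $\delta\to0$ the dependence of the required $c$ on the bump itself is negligible, so the constraint is solvable for small $\delta$.

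The range condition is the main step. Fix $g\in C([0,T])$; any solution of $(\lambda-L^{(T)})f=g$ in $D(L^{(T)})$ must satisfy $\lambda f(0)=L^{(T)}f(0)+g(0)=g(0)$, so the value $f(0)=g(0)/\lambda$ is forced, and using $\int K(x,dy)=1$ the resolvent equation becomes
\[
f'(x)=-(\lambda+b(x))f(x)+b(x)\int_{\mathbb{R}_{+}}f((x+y)\wedge T)\,K(x,dy)+g(x),\qquad f(0)=g(0)/\lambda.
\]
I would treat its mild (Duhamel) form as a fixed point $f=\Phi(f)$ on $C([0,T])$; because $b$ is bounded on the compact $[0,T]$ one gets $\|\Phi(f_{1})-\Phi(f_{2})\|_{\infty}\le(\|b\|_{\infty}/\lambda)\|f_{1}-f_{2}\|_{\infty}$, so $\Phi$ is a strict contraction once $\lambda>\|b\|_{\infty}$ and Banach's theorem yields a unique $f$. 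Continuity of $x\mapsto\int f((x+y)\wedge T)\,K(x,dy)$, hence the regularity placing $f$ in $D(L^{(T)})$, follows from the uniform continuity of $f$ on $[0,T]$ together with the weak continuity of $K$ in Assumption~\ref{ass:b,K}, and a direct substitution at $x=0$ using $f(0)=g(0)/\lambda$ gives $L^{(T)}f(0)=\lambda f(0)-g(0)=0$, so indeed $f\in D(L^{(T)})$ and $(\lambda-L^{(T)})f=g$. The most delicate point is precisely this range condition: the integral term looks \emph{forward} (toward $T$) while the transport term runs \emph{backward} (toward $0$), which is why it is natural to solve the equation globally as a fixed point rather than as an initial value problem, and to check with care that the fixed point genuinely lands in the domain through the continuity hypotheses on the coefficients.
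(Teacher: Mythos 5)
Your proof is correct, but it takes a genuinely different route from the paper's. The paper does not apply Lumer--Phillips to $L^{(T)}$ itself: it first treats the damped transport operator $\mathcal{A}f=-f'-2\|b\|_{\infty}f$ on the same domain, proves its dissipativity by pairing $\mathcal{A}f$ with the measure $f(x^{\ast})\delta_{x^{\ast}}$ at a maximizer $x^{\ast}$ of $|f|$, obtains the range condition by solving the constant-coefficient ODE $\lambda f+f'=g$ subject to the boundary condition $L^{(T)}f(0)=0$, and only then recovers $L^{(T)}$ as a bounded perturbation of $\mathcal{A}$, invoking the bounded-perturbation theorem \cite[Chapter 3, Theorem 1.1]{pazy}. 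You instead verify the Lumer--Phillips hypotheses directly for the nonlocal operator: dissipativity via the positive maximum principle (where the domain constraint $L^{(T)}f(0)=0$ enters exactly as in the paper's case analysis at $x^{\ast}=0$), density via an explicit bump correction at the origin (the paper disposes of this by citation), and --- the real point of divergence --- the range condition for the full resolvent equation, which you solve by a Banach fixed point on its mild form, using $\lambda>\|b\|_{\infty}$ to get a contraction. Your remark that the equation cannot be treated as an initial value problem, because the integral term looks forward while the transport runs backward, identifies precisely the difficulty that the paper's decomposition is designed to dodge: by dumping the entire nonlocal term into the bounded perturbation, the paper only ever solves a local ODE. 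What each approach buys: the paper's argument is shorter at the price of quoting perturbation theory, and it yields only a quasi-contractive semigroup (growth bound $e^{ct}$), whereas your direct argument produces a positive contraction semigroup, which is the natural conclusion for the semigroup of a (sub-)Markov process.

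One shared caveat, which is a defect of the statement's hypotheses rather than of either proof: Assumption~\ref{ass:b,K} only makes $b$ measurable and locally bounded, yet both arguments implicitly require $x\mapsto b(x)\int_{\mathbb{R}_{+}}(f((x+y)\wedge T)-f(x))\,K(x,dy)$ to be continuous --- in your proof so that the fixed point is $C^{1}$ and genuinely lands in $D(L^{(T)})$, in the paper's so that the perturbing operator actually maps $C([0,T])$ into itself. Without (essentially) continuity of $b$, the operator $L^{(T)}$ does not even map its stated domain into $C([0,T])$, so this gap is inherited from the proposition itself and should not be charged to your argument.
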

\begin{proof}
In the following, the  supremum norm $\|\cdot\|_{\infty}$ refers to the one of $C([0,T])$. Let us first consider the operator
\[
\mathcal{A}f=-f'-2\|b\|_{\infty}f,
\]
defined for function $f$ in the domain 
\[
D(\mathcal{A})=\left\{f\in C^{1}([0,T]),\ L^{(T)}f(0)=0 \right\}.
\]
It is easily seen that $L^{(T)}$ is a perturbation of $\mathcal{A}$ by a bounded operator. Hence, deducing that $L^{(T)}$ generate a strongly continuous semigroup is easy if $\mathcal{A}$ does. Hence, we begin to show, using Lumer-Philips Theorem (see \cite[Chapter 1, Theorem 4.3]{pazy}), that $\mathcal{A}$ generates a strongly continuous semigroup.

Let $f$ in $D(\mathcal{A})$ and $x^{\ast}$ be a global maximum of $|f|$.

It is hence easily seen that the measure $f(x^{\ast}) \delta_{x^{\ast}}$  belongs to the duality set of $f$ \cite[Chapter 1, Section 1.4]{pazy}. In addition, we have
\[
\langle -f'-2\|b\|_{\infty}f,\ f(x^{\ast}) \delta_{x^{\ast}}\rangle = \left(-f'(x^{\ast})-2\|b\|_{\infty}f(x^{\ast}) \right)f(x^{\ast})=-\frac{1}{2}(f^2)'
(x^{\ast})-2\|b\|_{\infty}f(x^{\ast})^{2}. \]
Since $x^\ast$ is an extremum of $f$, \begin{equation}\label{eq:disBound}\langle -f'-\|b\|_{\infty}f,\ f(x^{\ast}) \delta_{x^{\ast}}\rangle\leq -\|b\|_{\infty}f(x^{\ast})^{2},\end{equation} as soon as $x^{\ast}\in (0,T)$. On the other hand, $x^{\ast}$ is a maximum for $f^{2}$, so \eqref{eq:disBound} also holds if $x^{\ast}=T$. Finally, if $x^{\ast}=0$, we have 
\begin{multline*}\left(-f'(x^{\ast})-2\|b\|_{\infty}f(x^{\ast}) \right)f(x^{\ast})=-\|f\|_{\infty} b(0)\int_{\mathbb{R}_{+}}f(y)\ K(0,dy)-(2\|b\|_{\infty}-b(0))\|f\|_{\infty}^{2}\\\leq (b(0)-\|b\|_{\infty})\|f\|_{\infty}^{2}.\end{multline*}
Finally, in any case, $\langle -f'-\|b\|_{\infty}f,\ f(x^{\ast}) \delta_{x^{\ast}}\rangle\leq 0$ which implies that $\mathcal{A}$ is a dissipative operator; see \cite[Chapter 1, Section 1.4]{pazy}. In addition, operator $L^{(T)}$ is unbounded, its kernel is dense in $C([0,T])$ (see Prop. 1.7.16 in \cite{Banach}). To apply Lumer-Philips theorem, it remains to show that the range of $\lambda-\mathcal{A}$  is dense in $C([0,T])$. However, taking $g\in C([0,T])$, solving the problem
\[
\left\{ 
\begin{array}{l}
\lambda f+f'=g\\
L^{(T)}f(0)=0,
\end{array}
\right.
\]
is straightforward using elementary ODE solving techniques. As a consequence, $\mathcal{A}$ generate a strongly continuous semigroup over $C([0,T])$.

Finally, since 
\[
f\mapsto b(x)\int_{\mathbb{R}_{+}}\, (f((x+y)\wedge T)-f(x))\, K(x,dy)
\]
is a bounded operator over $C([0,T])$,  $L^{(T)}$ is obtained from $\mathcal{A}$ through the perturbation by a bounded operator. As a consequence, $L^{(T)}$ is a closed operator with the same domain as $\mathcal{A}$ and generating a strongly continuous semigroup (see \cite[Chapter 3, Theorem 1.1]{pazy}).
	\end{proof}
We can finally show that $(L^{(T)},D(L^{(T)}))$ is the generator of the truncated contour process.
\begin{thm}
\label{thm:genContour}
Let $\mathbb{T}$ with birth-kernel $K$ and birth-rate $b$ satisfying Assumption~\ref{ass:b,K}. Let $(\mathbb{P}_{x})_{x\in\mathbb{R}_{+}}$ a family of probability measure such that $\mathbb{P}_{x}(\xi_{\emptyset}=x)=1$.  Denote, for any positive real number $T$, $\mathbb{T}^{T}$ the truncated tree above $T$. Then,  $((\mathcal{C}_{s}(\mathbb{T}^{T}))_{s\geq 0},\mathbb{P}_{x})$ is a Feller process, with generator 
\[
L^{(T)}f(x)=-f'(x)+b(x)\int_{\mathbb{R}_{+}}\, (f((x+y)\wedge T)-f(x))\, K(x,dy),
\]
on the domain
\[
D(L^{(T)})=\left\{f\in C^{1}([0,T]),\ L^{(T)}f(0)=0 \right\}.
\]

\end{thm}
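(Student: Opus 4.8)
The plan is to combine the two preceding propositions. The first tells us that $((\mathcal{C}_s(\mathbb{T}^T))_{s\geq0},\mathbb{P}_x)$ is Feller, hence it generates a strongly continuous semigroup $(P_s)_{s\geq0}$ on $C([0,T])$ with some strong generator $(\mathcal{L},D(\mathcal{L}))$; the second tells us that $(L^{(T)},D(L^{(T)}))$ itself generates a strongly continuous semigroup on $C([0,T])$. Since a generator of a strongly continuous semigroup admits no proper extension that is again such a generator (the maximality property, \cite[Lemma 17.12]{Kal}), it suffices to prove the inclusion $L^{(T)}\subseteq\mathcal{L}$: that every $f\in D(L^{(T)})$ lies in $D(\mathcal{L})$ with $\mathcal{L}f=L^{(T)}f$. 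Equivalently, I must establish the strong limit $\lim_{s\to0}s^{-1}(P_sf-f)=L^{(T)}f$ in $(C([0,T]),\|\cdot\|_\infty)$.

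To obtain this limit I would fix $f\in D(L^{(T)})$ and decompose $\mathbb{E}_x[f(\mathcal{C}_s(\mathbb{T}^T))]$ according to the first jump time $J$ of the contour started at height $x$. Before its first jump the contour drifts downward with unit slope, so $\{J>u\}$ has probability $\exp(-\int_{x-u}^x b(v)\,dv)$, exactly as in the proof of the Feller property. On $\{J>s\}$ one has $\mathcal{C}_s=x-s$ and $f(\mathcal{C}_s)-f(x)=-sf'(x)+o(s)$, the error being uniform by uniform continuity of $f'$ on the compact $[0,T]$ together with the expansion $\exp(-\int_{x-s}^x b(v)\,dv)=1-sb(x)+o(s)$. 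On $\{J\leq s\}$ the first jump occurs at some time $u\leq s$ from height $x-u$, with landing height $(x-u+y)\wedge T$ for $y\sim K(x-u,\cdot)$, the residual time $s-u$ being negligible; averaging and using the weak continuity of $K$ and continuity of $f$ shows this term contributes $s\,b(x)\int f((x+y)\wedge T)\,K(x,dy)+o(s)$. Two or more jumps contribute $O(s^2)$ uniformly, by local boundedness of $b$. Collecting the terms yields $P_sf(x)-f(x)=sL^{(T)}f(x)+o(s)$, with the error uniform on every interval $[\delta,T]$.

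The main obstacle is to upgrade this into uniform convergence on all of $[0,T]$, i.e.\ to control the regime $x<s$ where the deterministic downward drift would reach the boundary $0$ within one time step, so that the behaviour of the contour at $0$ enters. This is precisely where the domain constraint $L^{(T)}f(0)=0$ is used: it forces the candidate limit to vanish at the boundary, so that near $0$ it only remains to show $\sup_{x\leq s}|s^{-1}(P_sf(x)-f(x))|\to0$. I would bound this supremum by re-using the boundary estimates already appearing in the Feller proof (the terms involving $\mathbb{P}_x(T>x)$ and the increment $\exp(-\int_y^x b(u)\,du)$ in \eqref{eq:FelEstim}), combined with $f\in C^1([0,T])$ and $L^{(T)}f(0)=0$; continuity of $L^{(T)}f$ at $0$ then glues the two regimes into a single uniform estimate.

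Once the strong limit is established, every $f\in D(L^{(T)})$ satisfies $f\in D(\mathcal{L})$ and $\mathcal{L}f=L^{(T)}f$, so $L^{(T)}\subseteq\mathcal{L}$. As both operators generate strongly continuous semigroups on $C([0,T])$, the maximality property \cite[Lemma 17.12]{Kal} forces $\mathcal{L}=L^{(T)}$, which identifies the generator of the truncated contour process and completes the proof.
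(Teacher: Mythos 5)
Your proposal is correct and follows essentially the same route as the paper: uniform convergence of the difference quotient $s^{-1}(P_sf-f)\to L^{(T)}f$ for $f\in D(L^{(T)})$, obtained by decomposing according to the jumps of the contour (no jump, exactly one jump, at least two jumps, with errors controlled by the weak continuity of $K$ and the local boundedness of $b$), combined with the preceding proposition that $(L^{(T)},D(L^{(T)}))$ generates a strongly continuous semigroup and the maximality property of \cite[Lemma 17.12]{Kal}. If anything, your explicit treatment of the absorbing boundary regime $x<s$ via the domain condition $L^{(T)}f(0)=0$ is spelled out more carefully than in the paper's own proof, which leaves that case implicit.
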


\begin{proof}
	 We can now derive the generator of this process.  Assume now that $f$ belongs to the set $D(L^{(T)})$ defined above.  In order to lighten notation, $\mathcal{E}_{s}(\mathbb{T}^{T})$ is now simply denoted $\mathcal{E}_{s}$.

\[
\mathbb{E}_{x}\left[f(\mathcal{C}_{h}) \right]=\mathbb{E}_{x}\left[f(\mathcal{C}_{h})\mathds{1}_{\Delta \mathcal{C}([0,h])=0 } \right]+\mathbb{E}_{x}\left[f(\mathcal{C}_{h})\mathds{1}_{\Delta \mathcal{C}([0,h])=1 } \right]+\mathbb{E}_{x}\left[f(\mathcal{C}_{h})\mathds{1}_{\Delta \mathcal{C}([0,h])>1 } \right],
\]
where $\Delta C([0,h])$ denotes the number of jump of $\mathcal{C}$ over the time interval $[0,h]$.
By construction of the exploration process $\Delta C([0,h])=1$ if and only if $\mathcal{N}_{\emptyset}([x-h,x])>0$ and $\mathcal{N}_{N_{\emptyset}}([x+\xi_{N_{\emptyset}}-h,x+\xi_{N_{\emptyset}}-J])=0$, where $N_{\emptyset}$ stands for $\mathcal{N}_{\emptyset}([0,\xi_{\emptyset}])$ and $J=\inf\{s>0\mid \mathcal{N}_{\emptyset}([\xi_{\emptyset}-s,\xi_{\emptyset}])>0 \}$. In particular, let us highlight that $N_{\emptyset}$ is the number of children of individual $\emptyset$ and that individual $N_{\emptyset}$ is the first one to be explored by the contour process after exploring individual $\emptyset$. These lead to
\[
\mathbb{E}_{x}\left[f(\mathcal{C}_{h})\mathds{1}_{\Delta \mathcal{C}([0,h])=1 } \right]=\mathbb{E}_{x}\left[f(x+V_{N_{\emptyset}})\mathds{1}_{\mathcal{N}_{\emptyset}([x-h,x])>0 }\mathds{1}_{\mathcal{N}_{N_{\emptyset}}([x+\xi_{N_{\emptyset}}-h,x+\xi_{N_{\emptyset}}-J])=0 } \right].
\]
By construction, $\xi_{N_{\emptyset}}$, $\mathcal{N}_{N_{\emptyset}}$ and $\mathcal{N}_{\emptyset}$ are independent. It follows that
\begin{align*}
\mathbb{E}_{x}\left[f(\mathcal{C}_{h})\mathds{1}_{\Delta \mathcal{C}([0,h])=1 } \right]&=\mathbb{E}_{x}\left[f(x+\xi_{N_{\emptyset}})\mathds{1}_{\mathcal{N}_{\emptyset}([x-h,x])>0 }\mathds{1}_{\mathcal{N}_{N_{\emptyset}}([x+\xi_{N_{\emptyset}}-h,x+\xi_{N_{\emptyset}}-J])=0 } \right]\\
&=\mathbb{E}_{x}\left[\int_{\mathbb{R}_{+}}f(x+y)\mathds{1}_{J\leq h }\mathds{1}_{\mathcal{N}_{N_{\emptyset}}([x+y-h,x+y-J])=0 }\, K(x-J,dy) \right]\\
&=\int_{[0,h]}\int_{\mathbb{R}_{+}}f(x+y)e^{-\int_{x+y-h}^{x+y-z} b(u)\ du }\, K(x-z,dy) b(x-z)e^{-\int_{x-z}^{x}b(u)\ du}\ dz,
\end{align*}
where the last equality is obtained using that $\mathbb{P}_{x}(J>s)=\exp(\int_{x-s}^{x}b(u)\ du)$.
Now, set 
\[
I_{h}=\int_{[0,h]}b(x-z)e^{-\int_{x-z}^{x}b(u)\ du}\ dz\int_{\mathbb{R}_{+}}f(x+y)\, K(x,dy).
\]
Now, it is easily seen that
\begin{align*}
&\Bigg|\int_{[0,h]}\int_{\mathbb{R}_{+}}f(x+y)e^{-\int_{x+y-h}^{x+y-z} b(u)\ du }\, K(x-z,dy) b(x-z)e^{-\int_{x-z}^{x}b(u)\ du}\ dz-I\Bigg|\\\leq&\ 3 \Bigg|\int_{[0,h]}\int_{\mathbb{R}_{+}}f(x+y) \left(K(x-z,dy)-K(x,dy)\right) b(x-z)e^{-\int_{x-z}^{x}b(u)\ du}\ dz\Bigg|\\\leq&\ 3h\|b\|_{L^{\infty}([0,T])}\sup_{z\in [0,h]}\left|(Kf(x)-Kf(x-z)\right|.
\end{align*}
But since $x\mapsto K(x,dy)$ is weakly continuous, from Heine-Cantor Theorem we get that 
\[
h\|b\|_{L^{\infty}([0,T])}\sup_{z\in [0,h]}\left| Kf(x)-Kf(x-z)\right|=o(h).
\]
Moreover,
\[
\lim\limits_{h\to 0}\frac{1}{h} I_{h}=b(x)\int_{\mathbb{R}_{+}}f(x+y)\ K(x,dy),
\]
we obtain the same limit for $h^{-1}\mathbb{E}_{x}\left[f(\mathcal{C}_{h})\mathds{1}_{\Delta \mathcal{C}([0,h])=1 } \right]$.
Using similar argument, one can also get that
\[
\mathbb{E}_{x}\left[f(\mathcal{C}_{h})\mathds{1}_{\Delta \mathcal{C}([0,h])=0 } \right]=f(x)e^{\int_{x-h}^{x}b(u)\ du },
\]
and
\[
\mathbb{E}_{x}\left[f(\mathcal{C}_{h})\mathds{1}_{\Delta \mathcal{C}([0,h])>1 } \right]\leq \|f\|_{\infty}\mathbb{P}\left( \mathcal{P}([0,h])>1\right),
\]
where $\mathcal{P}$ is a Poisson random measure on $[0,T]$ with constant intensity given by $\|b\|_{L^{\infty}([0,T])}$, which implies that
\[
\mathbb{E}_{x}\left[f(\mathcal{C}_{h})\mathds{1}_{\Delta \mathcal{C}([0,h])>1 } \right]=o(h),
\]
uniformly in $x$.
Hence, we get that
\begin{multline*}
\left|\frac{1}{h}\left(\mathbb{E}_{x}\left[f(\mathcal{C}_{h}) \right]-f(x)\right)-L^{(T)}f(x)\right|\leq \left|\frac{1}{h}\left(f(x-h) -f(x)\right)e^{-\int_{x-h}^{x}b(u)\ du}+f'(x)\right|\\+\left|b(x)-\frac{1}{h}\left(1-e^{-\int_{x-h}^{x}b(u)\ du} \right)\right|\|f\|_{\infty}+o(1).
\end{multline*}
Using the uniform continuity of $f'$ and $\int b$ on $[0,T]$ finally gives the desired result.
\end{proof}
\subsection{A jump process point of view for the contour}
\label{ssec:jumpProc}

Let us fix $T\in \mathbb{R}_+ \cup \{+\infty\}$. In this section, we define a Markov process  $(X^{(T)}_t)_{t\geq 0}$, which is distributed as $(\mathcal{C}_s(\mathbb{T}^{T}))_{s\geq 0}$ but whose construction is separated from the tree. The case $T=+\infty$ corresponds to the non-truncated tree $\mathbb{T}$ and we omit to write the symbol $(T)$, i.e.\ $(X^{(\infty)}_{t})_{t\geq 0}=(X_t)_{t\geq 0}$ . In addition, for being relevant with the tree notation, we also denote by $\mathbb{P}_{x}$ the probability measure on $\mathbb{D}(\mathbb{R}_{+})$, the Skorokhod space (see \cite{EK86}),  satisfying $\mathbb{P}_{x}(X_{0}^{T}=x)=1$, for $0<T\leq\infty$.

This process has $[0,T]$ (or $\mathbb{R}_+$ when $T=+\infty$) as state space and evolves deterministically and linearly (with slope $-1$) between some positive jumps. These jumps arise at the non-homogeneous rate $b$ and the sizes are given by the kernel $K_{T}$ satisfying for any function $f$ and $x \in [0, T]$,
$$
\int_{\mathbb{R}_{+}} f(y)\ K_{T}(x,dy) = \int_{\mathbb{R}_{+}} f(y\wedge T)\ K(x,dy).
$$
The process is also supposed to be absorbed at $0$. Let us give further explanations on its dynamics. Starting from some $x>0$, $(X^{(T)})_{t\geq 0}$ experiences its first jump at some random time $\tau \in \mathbb{R}_+ \cup \{+\infty \}$ such that
\begin{equation*}
\label{eq:loitau}
\forall t\geq 0, \quad \mathbb{P}(\tau>t)= \exp\left(-\int_0^{t\wedge x} b(x-s) ds \right),
\end{equation*}
and $\mathbb{P}(\tau= \infty)= \exp\left(-\int_0^{x} b(u) du\right)$, corresponding to the absorption of the process at $0$. For $x\in [0,\tau)$, $X^{T}_s=(x-s)\mathbf{1}_{s\leq x}$, and conditionally on the event $\{\tau < \infty\}=\{\tau < x\}$, the jump size $(X^{(T)}_{\tau}-X^{(T)}_{\tau-})$ is distributed according to $K_{T}(X^{(T)}_{\tau-}, \cdot)=K_{T}(x-\tau, \cdot)$.
After this jump time, the process $(X^{(T)}_t)_{t\geq 0}$ recursively follows these preceding steps.

If $T<+\infty$ and $b$ is locally bounded (as in Assumptions \ref{ass:b,K}) and jumps are bounded (as in Assumptions \ref{ass:b,K}), it is easy to see that the process is well-defined. Namely, there is no explosion, that is the sequence of jump times does not converge to some finite value. More precisely, if $(T_k)_{k\geq 0}$ is the sequence of jump times of $(X_t)_{t\geq 0}$, we have that $\lim_{k\to \infty} T_k=+\infty$ (it can also be a finite sequence whose last term is $+\infty$).  This may not be the case when $T=+\infty$. Note that this definition of explosion is consistent with the definition of \cite{MT93III}.

Due to the exponential-like distribution of $\tau$ and its iterated construction, $(X^{(T)}_t)_{t\geq 0}$ is a strong Markov process.  
 It is neither a diffusion process (because it is not continuous) nor a L\'{e}vy process (because it is space-inhomogeneous). This type of process belongs to the class of Piecewise Deterministic Markov
Process (PDMP). Introduced in \cite{D84},  this class of processes has recently motivated  a considerable amount of research in various context (see for instance \cite{ABGMKZ,CDGMMY,M15,SDZ} for surveys). Among many results, simulation algorithms can be found in \cite{D93,SDZ,LTT}.

The present section is devoted to the study of this particular process. In particular, we can derive its generator as above and, even, characterize its extended generator.

\begin{thm}[Strong generator]
Under Assumptions \ref{ass:b,K}, if $T<+\infty$, then $(X_t^{(T)})_{t\geq0}$ is a Feller process with full generator
\begin{align*}
L^{(T)} f(x)&=-f'(x)+b(x)\int_{\mathbb{R}_{+}}\, (f((x+y)\wedge T)-f(x))\, K(x,dy)\\
&=-f'(x)+b(x)\int_{\mathbb{R}_{+}}\, (f(x+y)-f(x))\, K_{T}(x,dy),
\end{align*}

 on $C([0,T])$ defined
on the domain
\[
D(L^{(T)})=\left\{f\in C^{1}([0,T]),\ L^{(T)} f(0)=0 \right\}.
\]
\end{thm}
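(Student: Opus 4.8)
The plan is to exploit that, by its very construction, the piecewise-deterministic process $(X_t^{(T)})_{t\geq 0}$ has exactly the same infinitesimal dynamics as the contour process $(\mathcal{C}_s(\mathbb{T}^{T}))_{s\geq 0}$ treated in Theorem~\ref{thm:genContour}: a deterministic drift of slope $-1$, upward jumps arising at the state-dependent rate $b$ with sizes drawn from $K_{T}(x,\cdot)$, and absorption at $0$. Accordingly I would follow the same three-step scheme as before: (i) check that $(X_t^{(T)})$ is Feller; (ii) show that the pointwise generator of its semigroup equals $L^{(T)}f$ on $D(L^{(T)})$; and (iii) invoke the maximality of operators generating strongly continuous semigroups to upgrade this to the statement that $(L^{(T)},D(L^{(T)}))$ is the full generator. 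As a byproduct this reproves the distributional identity $X^{(T)}\stackrel{d}{=}\mathcal{C}(\mathbb{T}^{T})$ asserted in the section; alternatively, one could establish that identity first, by matching the one-step transition mechanism, and then quote Theorem~\ref{thm:genContour} directly.

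For step (i), the crucial point is that the law of the first jump time $\tau$ and of the post-jump state have been built to coincide with those arising from the exploration of the tree; in particular $\mathbb{P}_x(\tau>h)=\exp(-\int_{x-h}^{x}b(u)\,du)$, which is precisely the quantity entering the Feller estimate \eqref{eq:FelEstim}. I would therefore reproduce that estimate verbatim to conclude that $P_s$ maps $C([0,T])$ into itself and that the resulting semigroup is strongly continuous, the weak continuity of $x\mapsto K(x,dy)$ from Assumption~\ref{ass:b,K} being what guarantees the continuity of the jump contribution.

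For step (ii), fix $f\in D(L^{(T)})$ and decompose $\mathbb{E}_x[f(X_h^{(T)})]$ according to whether $(X^{(T)})$ makes zero, exactly one, or at least two jumps on $[0,h]$, exactly as in the proof of Theorem~\ref{thm:genContour}. Using the law of $\tau$, the no-jump term is $f(x-h)\exp(-\int_{x-h}^{x}b(u)\,du)$, whose normalized increment tends to $-f'(x)-b(x)f(x)$; the single-jump term contributes $h\,b(x)\int_{\mathbb{R}_{+}}f(x+y)\,K_{T}(x,dy)+o(h)$, the error being controlled, as there, by the Heine--Cantor uniform continuity stemming from the weak continuity of $K$; and the multiple-jump term is $o(h)$ uniformly in $x$ because the jump rate is dominated by $\|b\|_{L^{\infty}([0,T])}<\infty$. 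Summing these and using the uniform continuity of $f'$ and of $\int b$ on the compact interval $[0,T]$ yields
\[
\frac{1}{h}\left(\mathbb{E}_x\big[f(X_h^{(T)})\big]-f(x)\right)\xrightarrow[h\to0]{}L^{(T)}f(x)
\]
uniformly in $x$, the boundary condition $L^{(T)}f(0)=0$ being exactly what makes the limit belong to $C([0,T])$ and be compatible with absorption at $0$.

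Finally, for step (iii), recall that we have already established that $(L^{(T)},D(L^{(T)}))$ generates a strongly continuous semigroup on $C([0,T])$; such an operator enjoys the maximality property of \cite[Lemma~17.12]{Kal}. Step (ii) shows that the full Feller generator of $(X_t^{(T)})$ extends $(L^{(T)},D(L^{(T)}))$, and maximality forces the two to coincide, giving the claim. I expect the main obstacle to be the uniform-in-$x$ control of the remainder terms up to and including the two boundaries $x=0$ and $x=T$: one must verify both that the multiple-jump contribution is genuinely $o(h)$ uniformly and that the absorbing behaviour at $0$ is correctly encoded by the constraint $L^{(T)}f(0)=0$, so that the generator limit lives in $C([0,T])$ rather than merely in $\mathcal{B}_b([0,T])$. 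Since $b$ is locally bounded and $K$ is weakly continuous by Assumption~\ref{ass:b,K}, these estimates go through as in the tree case.
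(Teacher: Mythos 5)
Your proposal is correct and follows essentially the same route as the paper, whose entire proof reads: the Feller property comes from \cite[Theorem (27.6) p 27]{D93}, and ``the remaining of the proof is analogous to the one of Theorem \ref{thm:genContour}'' --- i.e.\ exactly the jump-count decomposition and the maximality argument via \cite[Lemma 17.12]{Kal} that you spell out. The only divergence is that for the Feller property you re-derive estimate \eqref{eq:FelEstim} by hand instead of citing Davis's general PDMP theorem as the paper does; both are valid, yours being more self-contained at the cost of repeating the earlier computation.
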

\begin{proof}
The Feller property comes from \cite[Theorem (27.6) p 27]{D93}. The remaining of the proof is analogous to the one of Theorem \ref{thm:genContour}.
\end{proof}

An easy consequence is the following (see for instance \cite[Proposition 1.2.9]{EK86}).

\begin{cor}[The PDMP and the contour process]
\label{cor:mmloi}
Under Assumptions \ref{ass:b,K}, processes $(\mathcal{C}_{t}(\mathbb{T}^{(T)}))_{t\geq 0}$ and $(X^{(T)}_t)_{t\geq 0}$ have the same law.
\end{cor}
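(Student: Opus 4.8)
The plan is to invoke the uniqueness of a Feller process given its generator, so that essentially no new computation is required. The two preceding results supply exactly the needed inputs: Theorem~\ref{thm:genContour} shows that $(\mathcal{C}_{t}(\mathbb{T}^{T}))_{t\geq 0}$ is a Feller process whose full generator is $(L^{(T)}, D(L^{(T)}))$ on $C([0,T])$, while the strong-generator theorem for the PDMP shows that $(X^{(T)}_t)_{t\geq 0}$ is a Feller process with the \emph{very same} generator $(L^{(T)}, D(L^{(T)}))$ on the \emph{same} domain. The first thing I would do is record this coincidence explicitly, since it is the crux: both domains are literally $\{f\in C^{1}([0,T]),\ L^{(T)}f(0)=0\}$, so one does not even need to argue about cores before applying the uniqueness machinery.

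Next I would recall that a strongly continuous (contraction) semigroup on $C([0,T])$ is uniquely determined by its generator; this is the Hille--Yosida content cited as \cite[Proposition 1.2.9]{EK86}. Applying it to the two generators, which agree as operators including their domains, gives that the transition semigroups of $(\mathcal{C}_{t}(\mathbb{T}^{T}))_{t\geq 0}$ and of $(X^{(T)}_t)_{t\geq 0}$ coincide on $C([0,T])$. Since a Feller transition semigroup determines the finite-dimensional distributions of the associated Markov process, and since both processes start from the same point under $\mathbb{P}_x$ (indeed $\mathcal{C}_0(\mathbb{T}^{T})=\xi_\emptyset=x$ and $X^{(T)}_0=x$ by construction), the finite-dimensional distributions of the two processes agree.

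Finally, because both processes have c\`ad-l\`ag paths, equality of all finite-dimensional distributions upgrades to equality of laws on the Skorokhod space $\mathbb{D}(\mathbb{R}_{+})$, which is the assertion of the corollary. There is no genuine obstacle here, which is exactly why it is stated as an ``easy consequence''; the only point deserving a moment's care is verifying that the two generators really do share the identical domain (so that the uniqueness half of Hille--Yosida applies verbatim), and this is immediate from the statements of the two theorems. One could alternatively phrase the conclusion directly at the level of semigroups: since $P_t f = f + \int_0^t P_s L^{(T)} f\, ds$ holds for both processes on the common domain, the two semigroups solve the same abstract Cauchy problem and hence coincide, yielding the same law.
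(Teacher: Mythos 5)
Your proposal is correct and follows exactly the route the paper intends: the paper's entire ``proof'' is the remark that the corollary is an easy consequence of Theorem~\ref{thm:genContour}, the strong-generator theorem for the PDMP, and the semigroup-uniqueness result cited as \cite[Proposition 1.2.9]{EK86}, which is precisely the argument you spell out (identical generators with identical domains $\Rightarrow$ identical Feller semigroups $\Rightarrow$ identical finite-dimensional distributions $\Rightarrow$ identical laws on the Skorokhod space). Your write-up merely makes explicit the steps the paper leaves implicit.
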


In addition, we can also obtain the extended generator of the process (see the appendix for the definition).

\begin{thm}[Extended generator: case $T<+\infty$]
\label{th:genext}
Under Assumptions \ref{ass:b,K}, if $T<+\infty$, then the domain $\widehat{D}(L^{(T)})$ of the extended generator of $(X_t^{(T)})_{t\geq0}$ on $C([0, T])$ consists of the absolutely continuous functions $f$. Moreover, for such function $f$, we have
\begin{align*}
L^{(T)} f(x)=-f'(x)+b(x)\int_{\mathbb{R}_{+}}\, (f(x+y)-f(x))\, K_{T}(x,dy),
\end{align*}
where $f'$ has to be understood in the sense of Radon-Nikodym.
\end{thm}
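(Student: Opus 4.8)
The plan is to recognise $(X_t^{(T)})_{t\geq 0}$ as a piecewise-deterministic Markov process (PDMP) in the sense of Davis and to apply the general characterisation of the extended generator of a PDMP, e.g.\ \cite[Theorem 26.14]{D93}. First I would read off the three local characteristics from the description given above: the deterministic flow $\phi_t(x)=(x-t)^+$, a downward unit-speed translation absorbed at the fixed point $0$; the jump rate $x\mapsto b(x)$; and the post-jump kernel $Q(x,\cdot)$ defined by $\int f(y)\,Q(x,dy)=\int f(x+y)\,K_T(x,dy)$. Since the vector field generating the flow equals $-1$ on $(0,T]$, the derivative of $f$ along the flow is $\mathcal{X}f(x)=-f'(x)$, which is exactly the term we must interpret in the Radon--Nikodym sense.

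Next I would translate the three membership conditions of Davis's theorem into the present setting. First, the requirement that $t\mapsto f(\phi_t(x))$ be absolutely continuous for each $x$ reduces, because $\phi$ is a unit-speed translation, to the absolute continuity of $f$ on $[0,T]$; for such $f$ the flow derivative is $-f'$ with $f'$ the Radon--Nikodym density, precisely as claimed. Second, the jump-integrability condition $\mathbb{E}_x[\sum_{T_k\leq t}|f(X_{T_k})-f(X_{T_k-})|]<\infty$ holds for every $f\in C([0,T])$: indeed $|f(X_{T_k})-f(X_{T_k-})|\leq 2\|f\|_\infty$, and since $T<\infty$ and $b$ is bounded on $[0,T]$ by Assumption~\ref{ass:b,K}, the number of jumps before time $t$ is stochastically dominated by a Poisson variable of mean $t\,\|b\|_{L^\infty([0,T])}$, so the process does not explode and the expected sum is finite. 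Third, the boundary condition is the decisive point: with the flow absorbed at $0$, the only boundary point that can be reached carries absorption rather than a forced jump, so it does not belong to the active boundary $\Gamma^\ast$; Davis's boundary condition is therefore vacuous and no constraint is imposed on $f$ at $0$.

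This last observation is precisely what distinguishes the extended generator from the strong generator of Theorem~\ref{thm:genContour}. At the absorbing state the dynamics freeze ($\mathcal{X}f(0)=0$ and no jump occurs), so the extended generator takes the value $0$ there by construction; consequently the compensated functional $f(X_t)-f(X_0)-\int_0^t L^{(T)}f(X_s)\,ds$ has a null finite-variation contribution over the time the trajectory spends at $0$ and remains a local martingale without any algebraic relation between $f'(0)$ and the jump part. By contrast the strong generator demands that $L^{(T)}f$ be continuous up to $0$, which forces the value of the formula at $0$ to vanish, i.e.\ the condition $L^{(T)}f(0)=0$; relaxing the continuity requirement removes that condition. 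Collecting these facts, Davis's theorem identifies the domain with the absolutely continuous functions and yields $L^{(T)}f=\mathcal{X}f+b\int(f(\cdot+y)-f)\,K_T(\cdot,dy)=-f'+b\int(f(\cdot+y)-f)\,K_T(\cdot,dy)$ on $(0,T]$, which is the asserted expression. The main obstacle, and the only genuinely delicate step, is the bookkeeping at the absorbing boundary: one must verify that absorption at $0$ removes rather than imposes a boundary condition and that the Dynkin integral accumulates nothing while the process is frozen at $0$, so that the martingale property survives on the event of finite-time extinction.
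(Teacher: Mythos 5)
Your proof follows exactly the paper's route: the paper's entire proof is the single sentence that the result is a direct consequence of Davis's characterisation of the extended generator of a PDMP \cite[Theorem (26.14) p.69]{D93}, which is precisely the theorem you invoke. Your additional work --- identifying the local characteristics, checking the jump-integrability condition via the bound $\|b\|_{L^{\infty}([0,T])}$, and verifying that the absorbing state $0$ contributes a vanishing generator value rather than a boundary condition --- is a correct fleshing-out of the verification that the paper leaves implicit.
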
	
\begin{proof}
This is a direct consequence of \cite[Theorem (26.14) p.69]{D93}.
\end{proof}

The extended generator of the non-truncated process $(X_t)_{t\geq 0}$ is also characterized by  \cite[Theorem (26.14) p.69]{D93}.

In what follow, $(X_t)$ will be refereed as the contour process due to Corollary~\ref{cor:mmloi}

\section{Scale-type functions and hitting times}
\label{sec:scale}

In this section, we define our scale function for the contour process and study some applications. It is then structured through three subsection devoted to its definition, its applications for the tree and finally some particular cases with explicit formulas.

\subsection{Definition of the scale function}
\label{ssec:scale-def}
Here, we will define and study a \textit{scale} type function for the contour process. Before introducing it, let us recall briefly the definition of the scale function in the context of diffusion processes and L\'{e}vy processes. The interested reader may also take a look at \cite{RY} for the diffusion case and \cite{scaleFunctionKyp} for the L\'evy case. For a diffusion processes $(Z_t)_{t\geq 0}$ on $\mathbb{R}$, the (or actually a) scale function $s:\mathbb{R} \to \mathbb{R}$ verifies, for every $a< b$,
\begin{equation}
\label{eq:scale-eds}
 \forall x\in [a,b], \quad  \mathbb{P}_x(\tau_a > \tau_b)  = \mathbb{P}_x(\tau_{(-\infty,a]} > \tau_{[b,+\infty)}) = \frac{s(x) - s(a)}{s(b)-s(a)},
\end{equation}
where $\tau_A$ is the hitting time of the set or the point $A$ for the process $(Z_t)_{t\geq 0}$ (see for instance \cite[Chapter VII, Definition (3.3)]{RY}). This function is known to be continuous, strictly increasing, unique up to an affine function (see \cite[Chapter VII, Proposition (3.2)]{RY}). Moreover, from \cite[Equation (5.42) 339]{KS}, we have
$$
s:x\mapsto \int_c^x \exp\left(-2\int_c^y \frac{b(z)}{\sigma^2(z)} dz \right) dy,
$$
for any constant $c\in \mathbb{R}$, where $b,\sigma$ are respectively the drift and the diffusion coefficients of the diffusion process. Moreover, in general, $(s(Z_{t\wedge T_{[a,b]^c}}))_{t\geq 0}$ is a martingale and $\mathcal{L} s = 0$, where $\mathcal L$ is the generator of $(Z_t)_{t\geq 0}$ (see \cite[Equation (5.43)]{KS}). For a L\'{e}vy process $(L_t)_{t\geq 0}$, the definition of the scale function, generally denoted by $W$, is closely related to the previous one but slightly differ. Indeed, from \cite[Theorem 8 p.194]{B98}, we have, for every $a\leq b$,
$$
 \forall x\in [a,b], \quad   \mathbb{P}_x(\tau_{(-\infty,a]} > \tau_{[b,+\infty)}) = \frac{W(x-a)}{W(b-a)},
$$
where $\tau_A$ is now the hitting time associated to the L\'{e}vy process $(L_t)_{t\geq 0}$. In particular, in the case of L\'evy process with no positive jumps, the map $x\mapsto W(x)$ is increasing and verifies 
$$
\int_0^\infty e^{-\lambda x} W(x) dx = \frac{1}{\psi(\lambda)},
$$
for large $\lambda$ and $\psi$ is the Laplace exponent of $(L_t)$; see \cite[p.188]{B98}. Again, $(W(L_t))$ is a martingale and $\mathcal{L} W=0$, where $\mathcal{L}$ is the generator of $(L_t)_{t\geq 0}$ (this is a rewriting of the last expression, up to a Laplace transform).  
The map $W$ is fundamental in the study of homogeneous splitting tree and related models \cite{CH16, CLR,H15,  Lambert2010,R11,Ricthese}. 

Both these functions are particular cases of harmonic functions of Markov processes (see \cite[Chapter 6]{K11}) that are solutions to the Dirichlet problem. More precisely, for an open set $U$ and a function $h\in C(\partial U)$, the associated Dirichlet problem for a Markov process $(X_t)_{t\geq 0}$, with generator $L$ corresponds to the existence of a function $u$ of the following equation:
$$\left\{
\begin{array}{ll}
  Lu(x)=0,\quad &\forall x \in U \\
u(x)=h(x),\quad &\forall x\in \partial U.
\end{array}
\right.$$

From Feynman-Kac type formula, under irreducibility assumption $x\mapsto u(x)=\mathbb{E}_x[h(X_\tau)]$ is a solution of this equation (see \cite[Theorem 6.2.3]{K11}), where $\tau$ equals $\tau_{\partial U}$, the hitting time of the boundary of $U$.  In particular, regularity of $u$ is unknown and the expression $Lu$ is taken in some weak/abstract sense. For instance, it is not clear that for a diffusion processes that $Lu= au''+bu'$, where $u',u''$ are the usual derivatives of $u$. Moreover, 
 scale functions do not depend on the boundary $a,b$ of the interval. Indeed, here $u$ depends on $U$ and $h$, and $S,W$ should depend on $U=[a,b]$ and $h= \mathbf{1}_{[b,+\infty)}$.

Before introducing our definition of a scale function, let us point out some differences between diffusion processes, (spectrally negative) L\'{e}vy processes and our PDMP.  In contrast with both the others processes, the diffusion process has infinite quadratic variation and then $\lim_{x\to a} \mathbb{P}_x(\tau_a > \tau_b)=0$ and $\lim_{x\to b} \mathbb{P}_x(\tau_a > \tau_b)=1$. Also, the L\'{e}vy and the diffusion processes are stochastically monotonous (see \cite[Section 5.9]{K11}), although our PDMP is not in general. These properties entail, for these two processes, the monotonicity of $x\mapsto \mathbb{P}_x(\tau_{(-\infty,a]} > \tau_{[b,+\infty)})$ and the continuity property of the associated scale function. 

We can now give our definition of the scale function for the process $(X_{t})_{t\geq 0}$.

\begin{thm}[Definition of the scale function]
\label{th:def-scale}
Under Assumptions \ref{ass:b,K}, for every $T \in \mathbb{R}_+$, there exists a bounded function $S_{T}$ on $\mathbb{R}_+$ such that $S_{T}(t)=0$, for all $t\geq T$, $S_T$ is absolutely continuous on $[0,T)$\footnote{Usually absolutely continuity is defined on compact intervals, here it means that $S_\mf{T}$ is absolutely continuous over all compact subintervals of $[0,T)$. } and
$$
\forall \mf{t}\in [0,\mf{T}), \quad S_\mf{T}(\mf{t})= e^{-\int_0^\mf{t} b(\mf{u}) d\mf{u}} + e^{-\int_0^\mf{t} b(\mf{u}) d\mf{u}} \int_0^\mf{t} b(\mf{s}) e^{\int_0^\mf{s} b(\mf{u})\, d\mf{u}} \int_{[0,\mf{T}-\mf{s})} S_\mf{T}(\mf{s}+\mf{v})\, K(\mf{s},d\mf{v}) d\mf{s}.
$$
In particular, $S_{\mf{T}}$ belongs to the domain of the extended generator of $(X_{t \wedge \tau_{(-\infty,a]} \wedge \tau_{[b,+\infty)}})_{t\geq 0}$ and $LS_{\mf{T}}(\mf{t})=0$ for almost all $\mf{t}\in [0,\mf{T}]$. In addition, for all $ \mf{s}\leq \mf{t} \leq \mf{T}$, $(S_{\mf{T}} (X_{t \wedge \tau_{(-\infty,\mf{s}]} \wedge \tau_{[\mf{T},+\infty)}}))$ is a Martingale and
\begin{equation}
\label{eq:scale-proba}
\mathbb{P}_\mf{t} (\tau_{[0,\mf{s}]} > \tau_{[\mf{T},+\infty)}) = \frac{S_{\mf{T}}(\mf{t})-S_{\mf{T}}(\mf{s})}{S_{\mf{T}}(\mf{T})-S_{\mf{T}}(\mf{s})}=\frac{S_{\mf{T}}(\mf{s}) - S_{\mf{T}}(\mf{t})}{S_{\mf{T}}(\mf{s})}.
\end{equation}
\end{thm}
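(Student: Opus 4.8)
The plan is to build $S_T$ as the unique bounded fixed point of an integral operator, read off its regularity, verify it is $L^{(T)}$-harmonic, and finally convert harmonicity into the martingale property and the hitting-probability identity by optional stopping. Concretely, I would set the problem on the Banach space $B$ of bounded measurable functions on $[0,T)$ with the supremum norm (each such $f$ extended by $0$ on $[T,+\infty)$) and introduce
\[
(\Phi f)(t)= e^{-\int_0^t b(u)\,du}+ e^{-\int_0^t b(u)\,du}\int_0^t b(s)e^{\int_0^s b(u)\,du}\int_{[0,T-s)} f(s+v)\,K(s,dv)\,ds .
\]
Since $K(s,\cdot)$ is a probability measure, a direct estimate gives, for $f,g\in B$,
\[
|(\Phi f-\Phi g)(t)|\le e^{-\int_0^t b(u)\,du}\Big(\int_0^t b(s)e^{\int_0^s b(u)\,du}\,ds\Big)\|f-g\|_\infty=\big(1-e^{-\int_0^t b(u)\,du}\big)\|f-g\|_\infty,
\]
so that, as $b$ is locally bounded and $T<\infty$, $\Phi$ is a contraction with ratio $\rho=1-e^{-\int_0^T b(u)\,du}<1$. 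Banach's fixed point theorem yields a unique bounded $S_T$ solving the stated equation. Starting the Picard scheme at $S^{(0)}=0$, the nonnegativity of all coefficients makes $\Phi$ monotone, so $(S^{(n)})$ is nondecreasing and $S_T(t)\ge (\Phi 0)(t)=e^{-\int_0^t b(u)\,du}>0$ on $[0,T)$; this strict positivity is what legitimizes the denominator in \eqref{eq:scale-proba}.

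Next I would extract regularity and harmonicity. Writing $g(t)=S_T(t)e^{\int_0^t b(u)\,du}=1+\int_0^t h(s)\,ds$ with $h(s)=b(s)e^{\int_0^s b(u)\,du}\int_{[0,T-s)}S_T(s+v)\,K(s,dv)$, boundedness of $b$ and of $S_T$ on $[0,T]$ shows $h$ is bounded, hence $g$ is Lipschitz and $S_T=e^{-\int_0^{\cdot} b}\,g$ is absolutely continuous on every compact subinterval of $[0,T)$. Differentiating the product gives, for a.e.\ $t\in[0,T)$,
\[
S_T'(t)=-b(t)S_T(t)+b(t)\int_{[0,T-t)}S_T(t+v)\,K(t,dv).
\]
Because $S_T$ vanishes on $[T,+\infty)$ and $K$ is a probability kernel, plugging this derivative into $L^{(T)}$ makes the two $b(t)$-integrals cancel, yielding $L^{(T)}S_T(t)=0$ for almost every $t$, an identity which extends trivially to $t\ge T$.

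I would then obtain the martingale from the extended generator. By Theorem~\ref{th:genext} the extended domain consists of the absolutely continuous functions, and $S_T$ qualifies on $[0,T)$. Along the process stopped at $\tau=\tau_{(-\infty,s]}\wedge\tau_{[T,+\infty)}$, the only way to reach level $T$ is by a jump, at which instant the process is stopped with value $S_T(T)=0$, so the genuine discontinuity of $S_T$ at $T$ never enters the continuous (slope $-1$) part of the trajectory. Dynkin's formula then shows that $S_T(X_{t})-S_T(X_0)-\int_0^t L^{(T)}S_T(X_u)\,du$ is a local martingale; since the occupation measure of $X$ is absolutely continuous with respect to Lebesgue measure (between jumps $X$ moves with slope $-1$, and only finitely many jumps occur on bounded intervals) and $L^{(T)}S_T=0$ Lebesgue-a.e., the integral vanishes, and the boundedness of $S_T$ upgrades $(S_T(X_{t\wedge\tau}))_{t\ge 0}$ to a genuine martingale.

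Finally I would recover \eqref{eq:scale-proba} by optional stopping. A geometric argument gives $\tau<\infty$ almost surely: from any $x\in[s,T]$ the event of experiencing no jump during the next $x-s\le T$ time units has probability at least $e^{-\|b\|_{L^\infty([0,T])}T}>0$ and forces $X$ straight down to $s$, so repeated independent-in-law attempts eventually exhaust the confined region $[s,T]$. Letting $t\to\infty$ in the bounded martingale, $S_T(t)=\mathbb{E}_t[S_T(X_\tau)]$; on $\{\tau_{[0,s]}<\tau_{[T,+\infty)}\}$ one has $X_\tau=s$ and $S_T(X_\tau)=S_T(s)$, while on the complementary event $S_T(X_\tau)=0$, so that $S_T(t)=S_T(s)\,\mathbb{P}_t(\tau_{[0,s]}<\tau_{[T,+\infty)})$, which rearranges to \eqref{eq:scale-proba} using $S_T(T)=0$. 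I expect the main difficulty to lie not in the fixed-point construction but in this martingale step: one must reconcile the jump of $S_T$ at $T$ with membership in the extended domain, which is exactly why the statement phrases the martingale property for the process stopped at $\tau_{[T,+\infty)}$, so that $S_T$ is only ever evaluated where it is absolutely continuous or at the single boundary value $S_T(T)=0$.
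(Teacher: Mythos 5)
Your proof is correct, and its second half (membership in the extended domain via Theorem~\ref{th:genext}, the It\^o--Dynkin formula, a.s.\ finiteness of the hitting times, optional stopping with bounded convergence) follows the same route as the paper --- in fact with more care: you justify why the a.e.\ identity $L^{(T)}S_T=0$ can be integrated along the path (absolute continuity of the occupation measure between jumps), you prove finiteness of $\tau$ by the geometric no-jump argument where the paper merely asserts ``hitting times are finite'', and you establish the strict positivity $S_T(\cdot)\geq e^{-\int_0^{\cdot}b(u)\,du}>0$, which is what legitimizes the division by $S_T(s)$ in \eqref{eq:scale-proba}; your discussion of the discontinuity of $S_T$ at $T$ matches the paper's (implicit) treatment via the stopped process. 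The genuine difference is the fixed-point step. The paper only records that the map (your $\Phi$) is \emph{non-expansive} and invokes the Browder fixed point theorem on the convex set $\{H\in C([0,T]) : H(0)=1,\ \Vert H\Vert_\infty\leq 1,\ H\geq 0\}$, asserting that the ambient sup-norm space is uniformly convex --- which it is not (sup-norm spaces are not even strictly convex), so the paper's appeal to Browder is delicate as written. Your sharper estimate, based on the exact identity
$$
e^{-\int_0^t b(u)\,du}\int_0^t b(s)e^{\int_0^s b(u)\,du}\,ds \;=\; 1-e^{-\int_0^t b(u)\,du},
$$
shows that $\Phi$ is a \emph{strict} contraction with ratio $1-e^{-\int_0^T b(u)\,du}<1$ (local boundedness of $b$ and $T<\infty$ are exactly what is needed), so the elementary Banach fixed point theorem applies. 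This buys uniqueness of the bounded solution and a monotone Picard scheme (hence positivity) for free, and it sidesteps the functional-analytic machinery --- and its pitfall --- entirely.
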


\begin{proof}
Set $\mathcal{C}= \left\{H \in C[0,\mf{T}] \ | \ H(0)=1, \quad \Vert H \Vert_\infty \leq 1, H\geq 0  \right\}.$
This is a non-empty convex closed bounded subset in the uniformly convex Banach space of continuous functions $C([0,+\infty))$. Let $\mathcal{A}:\mathcal{C} \to C([0,+\infty))$, defined, for every $x\in [0,b]$ and $S\in \mathcal{C}$ by
$$
\mathcal{A}S(\mf{t}) = e^{-\int_0^\mf{t} b(\mf{u}) d\mf{u}} + e^{-\int_0^\mf{t} b(\mf{u}) d\mf{u}} \int_0^\mf{t} b(\mf{s}) e^{\int_0^\mf{s} b(\mf{u}) d\mf{u}} \int_{[0,\mf{T}-\mf{s})} S_\mf{T}(\mf{s}+\mf{v}) K(\mf{s},d\mf{v}) d\mf{s}
$$
We have $\mathcal{A}S(0)=1$ and $\Vert \mathcal{A} S \Vert_\infty \leq 1$, thus $\mathcal{A}S \in \mathcal{C}$. The map $\mathcal{A}$ is also non expansive, namely for all $S_1,S_2\in \mathcal{C}$,
$$
\Vert \mathcal{A} S_1 - \mathcal{A} S_{2} \Vert_\infty \leq \Vert S_1 - S_2\Vert_\infty.
$$
Then, by the Browder fixed point theorem $\mathcal{A}$ admits a fixed point $S_\star$ in $\mathcal{C}$. Set $S_{\mf{T}}(\mf{t}) =S_\star(\mf{t}) \mathbf{1}_{\mf{t}\in [0,\mf{T})}$. It is an absolutely continuous function on $[0,\mf{T})$ and then belongs to the extended generator of $(X_{t \wedge \tau_{(-\infty,\mf{s}]} \wedge \tau_{[\mf{T},+\infty)}})$; see \cite[Theorem (26.14) p.69]{D93}) or Theorem \ref{th:genext}. In particular, by the It\^o-Dynkin formula, see \cite[Section 26, page 66]{D93}, the process $(S_{\mf{T}} (X_{t \wedge T_{(-\infty,\mf{s}]} \wedge T_{[\mf{T},+\infty)}}))$ is a martingale. Since hitting times are finite  and this martingale is bounded, then using the stopping time theorem and the dominated convergence theorem, we find the expected formula \eqref{eq:scale-proba}.
\end{proof}

For L\'{e}vy processes, we can set $W_{\mf{T}}:\mf{t}\mapsto S_{\mf{T}}(\mf{T}-\mf{t}) \mathbf{1}_{\mf{t}<\mf{T}}$ and using the invariance by translation property of these processes, we recover that $W_{\mf{T}}$ does actually not depend on $\mf{T}$.

We also naturally recover the natural property that $S_{\mf{T}}$ is constant (namely $\mathbb{P}_\mf{t}(\tau_{[0,\mf{s}]} > \tau_{[\mf{T},+\infty)}) =0$) on $[0,\mf{T})$ if and only if $K(\mf{t},[0,\mf{T}))=1$ for all $\mf{t}\in [0,\mf{T}]$.

\begin{cor}[Integro-differential equation for $S_{\mf{T}}$]
\label{cor:Eqscale}
If $K$ is Feller, $b$ and $\mf{t}\mapsto K(\mf{t},[0,\mf{T}-\mf{t}))$ are continuous then $S_{\mf{T}} \in C^1([0,\mf{T}) \cup (\mf{T}, + \infty))$ and
\begin{equation}
\label{eq:gen-edo-scale}
\forall \mf{t} \in [0,\mf{T}), \quad -S'_{\mf{T}}(\mf{t}) + b(\mf{t}) \left(\int_{[0, \mf{T}-\mf{t})} S_{\mf{T}}(\mf{t}+\mf{s}) K(\mf{t},d\mf{s}) -S_{\mf{T}}(\mf{t}) \right)=0.
\end{equation}
\end{cor}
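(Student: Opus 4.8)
The plan is to differentiate the integral equation defining $S_T$ in Theorem~\ref{th:def-scale}. Write $\beta(t)=\int_0^t b(u)\,du$ and
\[
G(t)=\int_{[0,T-t)} S_T(t+v)\,K(t,dv),
\]
so that the defining equation reads $S_T(t)=e^{-\beta(t)}\bigl(1+\int_0^t b(s)e^{\beta(s)}G(s)\,ds\bigr)$ for $t\in[0,T)$. Since $b$ is now assumed continuous, $\beta$ is $C^1$; hence, once I have shown that $G$ is continuous on $[0,T)$, the integrand $s\mapsto b(s)e^{\beta(s)}G(s)$ is continuous, its primitive is $C^1$ with derivative $b(t)e^{\beta(t)}G(t)$, and $S_T$, being a product of $C^1$ functions, is $C^1$ on $[0,T)$. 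Differentiating and using $\beta'=b$ then yields $S_T'(t)=-b(t)S_T(t)+b(t)G(t)$, which is exactly \eqref{eq:gen-edo-scale}. On $(T,+\infty)$ one has $S_T\equiv 0$, so there is nothing to prove and $C^1$-regularity is immediate there.

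All the content therefore lies in the continuity of $G$, which I expect to be the main obstacle. The difficulty is that the natural test function $v\mapsto S_T(t+v)$ has a jump at $v=T-t$, because $S_T$ is continuous on $[0,T)$ but vanishes on $[T,+\infty)$; consequently the weak continuity of $K$ from Assumption~\ref{ass:b,K} cannot be invoked directly. To circumvent this, I would let $S_\star$ be the continuous representative of $S_T$ on $[0,T]$ (the fixed point produced in the proof of Theorem~\ref{th:def-scale}) and set $\tilde S(t)=S_\star(t\wedge T)$, a bounded continuous function on $\mathbb{R}_+$. Then
\[
G(t)=\int_{\mathbb{R}_+}\tilde S(t+v)\,K(t,dv)-S_\star(T)\,K\bigl(t,[T-t,+\infty)\bigr),
\]
which separates $G$ into a term built from a genuinely continuous test function and a boundary term.

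For the first term I would fix $t_0\in[0,T)$ and estimate, as $t\to t_0$,
\[
\Bigl|\int\tilde S(t+v)K(t,dv)-\int\tilde S(t_0+v)K(t_0,dv)\Bigr|\le \sup_{v\ge 0}\bigl|\tilde S(t+v)-\tilde S(t_0+v)\bigr|+\Bigl|\int\tilde S(t_0+v)\,\bigl(K(t,dv)-K(t_0,dv)\bigr)\Bigr|.
\]
The first summand tends to $0$ because $\tilde S$ is uniformly continuous (continuous on the compact $[0,T]$ and constant afterwards) while $K(t,\cdot)$ is a probability measure; the second tends to $0$ by the Feller/weak-continuity property of $K$ applied to the fixed bounded continuous function $\tilde S(t_0+\cdot)$. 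For the boundary term, $K\bigl(t,[T-t,+\infty)\bigr)=1-K\bigl(t,[0,T-t)\bigr)$ is continuous precisely by the additional hypothesis that $t\mapsto K(t,[0,T-t))$ is continuous. Combining these facts shows that $G$ is continuous on $[0,T)$, which completes the argument.
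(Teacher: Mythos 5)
Your proof is correct and follows the route the paper intends: Corollary~\ref{cor:Eqscale} is stated there without proof, as a direct differentiation of the fixed-point equation of Theorem~\ref{th:def-scale}, which is exactly what you carry out. Your handling of the jump of $S_T$ at $T$ — writing $G(t)=\int_{\mathbb{R}_+}\tilde S(t+v)\,K(t,dv)-S_\star(T)\,K\bigl(t,[T-t,+\infty)\bigr)$ with $\tilde S$ the continuous extension — is precisely the step that uses (and thereby explains) the extra hypothesis that $t\mapsto K(t,[0,T-t))$ be continuous, so your argument accounts for every assumption in the statement.
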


Note that if $K(\mf{t},d\mf{s})= \varphi(\mf{t},\mf{s}) d\mf{s}$, with a continuous density $\varphi$, then $K$ is Feller and $\mf{t}\mapsto K(\mf{t},[0,\mf{T}))$ is continuous.

\begin{cor}[Monotony properties of the scale function]
\label{cor:scal-mon}
Under Assumption~\ref{ass:b,K}, for every $\mf{T}\geq \mf{t} \geq 0$, we have 
$$
S_{\mf{T}}(\mf{t})=\mathbb{P}_\mf{t} (\tau_{0} < \tau_{[\mf{T},+\infty)}).
$$
In particular $\mf{t} \mapsto S_{\mf{T}}(\mf{t})$ is decreasing and  $\mf{T} \mapsto S_{\mf{T}}(\mf{t})$ is increasing.
\end{cor}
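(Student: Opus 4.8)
The plan is to derive everything from the probabilistic identity $S_{\mf{T}}(\mf{t}) = \mathbb{P}_\mf{t}(\tau_0 < \tau_{[\mf{T},+\infty)})$, which is the heart of the statement; the two monotonicity claims then follow almost for free. To obtain this identity I would specialize the scale formula \eqref{eq:scale-proba} of Theorem~\ref{th:def-scale} to the boundary value $\mf{s}=0$. The fixed-point construction in Theorem~\ref{th:def-scale} forces $S_\mf{T}(0)=1$, so the right-hand side of \eqref{eq:scale-proba} becomes $\frac{S_\mf{T}(0)-S_\mf{T}(\mf{t})}{S_\mf{T}(0)}=1-S_\mf{T}(\mf{t})$, while the left-hand side is $\mathbb{P}_\mf{t}(\tau_0 > \tau_{[\mf{T},+\infty)})$ since $\tau_{[0,0]}=\tau_0$. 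Hence $S_\mf{T}(\mf{t})=\mathbb{P}_\mf{t}(\tau_0\leq \tau_{[\mf{T},+\infty)})$. The last step is to discard the event $\{\tau_0=\tau_{[\mf{T},+\infty)}\}$: the process reaches $0$ only through its continuous descent (absorption) and reaches $[\mf{T},+\infty)$ only through an upward jump, so on the event that the exit time $\tau_0\wedge\tau_{[\mf{T},+\infty)}$ is finite (guaranteed by the finiteness of hitting times already invoked in the proof of Theorem~\ref{th:def-scale}) the two hitting times are a.s.\ distinct. This gives $\mathbb{P}_\mf{t}(\tau_0=\tau_{[\mf{T},+\infty)})=0$ and thus the claimed identity.

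For the monotonicity in $\mf{t}$, I would read it straight off \eqref{eq:scale-proba}: for $\mf{s}\leq \mf{t}\leq \mf{T}$ the quantity $\frac{S_\mf{T}(\mf{s})-S_\mf{T}(\mf{t})}{S_\mf{T}(\mf{s})}$ is a probability, hence nonnegative; since the fixed-point equation gives $S_\mf{T}(\mf{s})\geq e^{-\int_0^\mf{s} b(\mf{u})d\mf{u}}>0$ on $[0,\mf{T})$ (the correction term being nonnegative), this forces $S_\mf{T}(\mf{t})\leq S_\mf{T}(\mf{s})$. For $\mf{t}\geq \mf{T}$ one has $S_\mf{T}(\mf{t})=0$, so $\mf{t}\mapsto S_\mf{T}(\mf{t})$ is nonincreasing on all of $\mathbb{R}_+$.

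The monotonicity in $\mf{T}$ is the step I expect to be the main obstacle, because the PDMP is not stochastically monotone and so a naive coupling in the starting point is unavailable. My plan is instead to realize all the relevant events on a single \emph{non-truncated} process $(X_t)_{t\geq 0}$ started from $\mf{t}$: for $\mf{T}_1\leq \mf{T}_2$ one has $[\mf{T}_2,+\infty)\subseteq[\mf{T}_1,+\infty)$, hence $\tau_{[\mf{T}_1,+\infty)}\leq \tau_{[\mf{T}_2,+\infty)}$ pathwise, so $\{\tau_0<\tau_{[\mf{T}_1,+\infty)}\}\subseteq\{\tau_0<\tau_{[\mf{T}_2,+\infty)}\}$, and taking probabilities yields the monotonicity. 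The point to justify carefully is that the probability of $\{\tau_0<\tau_{[\mf{T},+\infty)}\}$ computed on the non-truncated process coincides with the value $S_\mf{T}(\mf{t})$ from the first step (which was obtained for the process truncated at level $\mf{T}$). This holds because the kernel $K_\mf{T}$ differs from $K$ only for jumps landing at or above $\mf{T}$, and the first such jump occurs precisely at $\tau_{[\mf{T},+\infty)}$; thus the two processes can be coupled to agree up to $\tau_0\wedge\tau_{[\mf{T},+\infty)}$, and on $\{\tau_0<\tau_{[\mf{T},+\infty)}\}$ no capping ever occurs, so the event carries the same probability under both dynamics. With this identification the inclusion argument closes the proof.
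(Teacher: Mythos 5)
Your proposal is correct and takes essentially the approach the paper intends: Corollary~\ref{cor:scal-mon} is stated there without proof as an immediate consequence of Theorem~\ref{th:def-scale}, and your argument (setting $\mf{s}=0$ in \eqref{eq:scale-proba} with $S_{\mf{T}}(0)=1$, discarding the null event $\{\tau_0=\tau_{[\mf{T},+\infty)}\}$ via the a.s.\ finiteness of the exit time, then reading monotonicity in $\mf{t}$ off the formula together with $S_{\mf{T}}(\mf{s})\geq e^{-\int_0^{\mf{s}}b(\mf{u})d\mf{u}}>0$, and monotonicity in $\mf{T}$ from pathwise inclusion of the events $\{\tau_0<\tau_{[\mf{T},+\infty)}\}$ on a single non-truncated process) is exactly how that consequence is meant to be drawn. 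Your additional care in coupling the truncated and non-truncated dynamics up to $\tau_0\wedge\tau_{[\mf{T},+\infty)}$, so that the value $S_{\mf{T}}(\mf{t})$ can legitimately be compared across different $\mf{T}$ on one probability space, fills in a detail the paper leaves implicit, and it is handled correctly.
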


\subsection{Tree properties though scale function}
\label{ssect:scale-tree}

We can now express certain properties of the tree through this new defined scale function. We set $\Xi_t$ the number of individuals at time $\mf{t}$.

\begin{prop}[Number of individuals at a fixed time]
Under Assumption~\ref{ass:b,K}, for every  $ \mf{t_0}\leq \mf{t}$, we have
\begin{equation}
\label{eq:CMJ0}
\mathbb{P}_{\mf{t}_0} \left( \Xi_{\mf{t}}=0\right) =\mathbb{P}_{\mf{t}_0} \left( \mathcal{H}(\mathbb{T}) \leq \mf{t}\right) =S_{\mf{t}}(\mf{t}_0).
\end{equation}
Moreover, conditionally on $\{\Xi_{\mf{t}}\neq 0\}$, $\Xi_\mf{t}$ is geometrically distributed; that is
\begin{equation}
\label{eq:CMJk}
\mathbb{P}_{\mf{t}_0} \left( \Xi_{\mf{t}}=k \ | \ \Xi_{\mf{t}}\neq 0 \right) = S_{\mf{t}}(\mf{t} -) (1-S_{\mf{t}}(\mf{t}-))^k
\end{equation}
\end{prop}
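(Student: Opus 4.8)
The plan is to transfer the question about the tree to the contour process via Corollary~\ref{cor:mmloi}, and then to read off both formulas from the scale function of Theorem~\ref{th:def-scale} using the strong Markov property. The starting observation, which I would justify from the construction of the exploration process (in particular \eqref{eq:inclu}) together with the truncation of Remark~\ref{rem:trunc}, is the pathwise identity
$$ \Xi_\mf{t} = \#\{ s \ge 0 \mid X^{(\mf{t})}_s = \mf{t} \}, $$
i.e.\ $\Xi_\mf{t}$ equals the number of visits of the contour $X^{(\mf{t})}$ of $\mathbb{T}^{\mf{t}}$ to the top level $\mf{t}$ before its absorption at $0$. Indeed, each individual alive at time $\mf{t}$ is explored exactly once, and because its life-interval is cut at height $\mf{t}$ in $\mathbb{T}^{\mf{t}}$, the contour attains the value $\mf{t}$ precisely while that individual is being explored; conversely, every instant at which $X^{(\mf{t})} = \mf{t}$ corresponds to a distinct individual alive at $\mf{t}$. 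Here I use that jumps of $X^{(\mf{t})}$ landing above $\mf{t}$ are capped at $\mf{t}$ by the kernel $K_{\mf{t}}$, so the level $\mf{t}$ is reached only at isolated instants, from which the process immediately decreases.

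For \eqref{eq:CMJ0}, I would first note that $\{\Xi_\mf{t} = 0\} = \{\mathcal{H}(\mathbb{T}) \le \mf{t}\}$ up to a $\mathbb{P}_{\mf{t}_0}$-null set: having no individual alive at $\mf{t}$ is the same as extinction by time $\mf{t}$ (the population is monotone once extinct), the only discrepancy being the null event that some death-date equals $\mf{t}$ exactly. Through the identity above, $\{\Xi_\mf{t} = 0\}$ is the event $\{\tau_{[\mf{t},+\infty)} = \infty\}$ that the contour never reaches the top. Non-explosion of the truncated PDMP (valid since $T=\mf{t}<\infty$ and $b$ is locally bounded) guarantees that $X^{(\mf{t})}$ a.s.\ hits $\{0\}\cup[\mf{t},+\infty)$, so that $\{\tau_{[\mf{t},+\infty)} = \infty\} = \{\tau_0 < \tau_{[\mf{t},+\infty)}\}$ a.s. Corollary~\ref{cor:scal-mon} identifies the latter probability as $S_{\mf{t}}(\mf{t}_0)$, which gives \eqref{eq:CMJ0}.

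For \eqref{eq:CMJk}, I would decompose the trajectory of $X^{(\mf{t})}$ at its successive visits to level $\mf{t}$. On $\{\Xi_\mf{t} \ne 0\}$ the first passage time $\tau_{[\mf{t},+\infty)}$ is finite, and by the strong Markov property of the Feller process $X^{(\mf{t})}$ at this time and at each subsequent return to $\mf{t}$, the process restarts from level $\mf{t}$ and the successive alternatives ``return to $\mf{t}$'' versus ``absorption at $0$'' are i.i.d. The relevant parameter is the probability $q$ of returning to $\mf{t}$ before being absorbed at $0$ when started from the top; since $X^{(\mf{t})}$ leaves $\mf{t}$ instantaneously downward, this is the left limit
$$ q = \lim_{s \uparrow \mf{t}} \mathbb{P}_s(\tau_{[\mf{t},+\infty)} < \tau_0) = \lim_{s \uparrow \mf{t}}\bigl(1 - S_{\mf{t}}(s)\bigr) = 1 - S_{\mf{t}}(\mf{t}-), $$
where I used \eqref{eq:scale-proba} with lower level $0$ (recall $S_{\mf{t}}(0)=1$), and where the left limit $S_{\mf{t}}(\mf{t}-)$ — rather than $S_{\mf{t}}(\mf{t}) = 0$ — enters precisely because $S_{\mf{t}}$ is discontinuous at $\mf{t}$ and the process restarts strictly below the top. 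Consequently the number of returns after the first visit is geometric with parameter $S_{\mf{t}}(\mf{t}-) = 1 - q$, which is exactly the geometric law \eqref{eq:CMJk} for $\Xi_\mf{t}$ conditioned on $\{\Xi_\mf{t} \ne 0\}$.

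The main obstacle is the first step: turning the informal picture ``individuals alive at $\mf{t}$ $=$ visits of the contour to level $\mf{t}$'' into a rigorous pathwise statement, and correctly handling the truncation so that visits to the capped level $\mf{t}$ are in bijection with such individuals. Everything else is bookkeeping with the scale function, but two finer points must be watched: the a.s.\ absorption of the truncated process on $\{0\}\cup\{\mf{t}\}$ (so that ``never reaching $\mf{t}$'' coincides with ``absorbed at $0$ first''), which rests on non-explosion; and the appearance of the left limit $S_{\mf{t}}(\mf{t}-)$, which is forced by the jump of $S_{\mf{t}}$ at $T=\mf{t}$ together with the instantaneous downward departure of $X^{(\mf{t})}$ from the top level.
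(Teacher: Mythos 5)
Your proof is correct and follows essentially the same route as the paper's: identify $\Xi_{\mf{t}}$ with the number of visits of the (truncated) contour to level $\mf{t}$, deduce \eqref{eq:CMJ0} from Theorem~\ref{th:def-scale} and Corollary~\ref{cor:scal-mon}, and obtain \eqref{eq:CMJk} from the strong Markov property applied to the i.i.d.\ excursions from level $\mf{t}$. Your write-up is in fact more careful than the paper's (explicit treatment of the truncation, of a.s.\ absorption via non-explosion, and of why the left limit $S_{\mf{t}}(\mf{t}-)$ appears), but the underlying argument is identical.
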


This result is a direct adaptation of \cite[Proposition 5.6]{Lambert2010}. The proof is written for sake of completeness.

Using \eqref{eq:CMJ0}, we recover the formula  established in \cite[Proposition 4]{LS13} and \cite[Proposition 3.2.6]{L17}. We even improve this result, because we establish the regularity of the function $S_{\mf{T}}$ and precise the sense of the derivative. We also detail more precisely the law of $(\Xi_{\mf{t}})_{t\geq 0}$.

\begin{proof}
From the definition ,of the contour process, the heigh of the tree is lower than $\mf{t}$ if and only if $(X_{\mf{t}})_{\mf{t}\geq 0}$ hits $0$ before $(\mf{t},\infty)$. Theorem~\ref{th:def-scale} then entails Equation~\eqref{eq:CMJ0}.
Now, as $\Xi_{\mf{t}}$ correspond of the number of times that $(X_t)_{t\geq 0}$ hits $\mf{t}$. By the Markov property, it is then a sequence of i.i.d. excursions of $(X_{\mf{t}})_{\mf{t}\geq0}$ from $\mf{t}$ on $(0,\mf{t}]$, stopped at the first one that exits it from the bottom.  Hence, from Theorem~\ref{th:def-scale}, we obtain \eqref{eq:CMJk}. 
\end{proof}

\begin{cor}[Extinction probability]
\label{cor:ext}
Under Assumption~\ref{ass:b,K}, for every $\mf{t}_0\geq 0$, we have
$$
\mathbb{P}_{\mf{t}_0} \left( \text{Ext}\right)= \mathbb{P}_{\mf{t}_0} \left( \exists \mf{t}\geq0, \ \Xi_{\mf{t}}=0\right)=  \mathbb{P}_{\mf{t}_0} \left( \mathcal{L}(\mathbb{T}) <\infty \right)= \lim_{\mf{t}\to \infty} S_{\mf{t}}(\mf{t}_0).
$$
In particular, $S:\mf{t_0}\mapsto \lim_{\mf{t}\to \infty} S_{\mf{t}}(\mf{t}_0)$ exists, is decreasing and it is solution to the functional equation
$$
\forall\mf{t}\geq 0, \ S(\mf{t})= e^{-\int_0^\mf{t} b(\mf{u}) d\mf{u}} + e^{-\int_0^\mf{t} b(\mf{u}) d\mf{u}} \int_0^\mf{t} b(\mf{s}) e^{\int_0^\mf{s} b(\mf{u}) d\mf{u}} \int_{[0,+\infty)} S(\mf{s}+\mf{v}) K(\mf{s},d\mf{v}) d\mf{s},
$$
\end{cor}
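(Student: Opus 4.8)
The plan is to derive all four equalities from equation~\eqref{eq:CMJ0} of the preceding proposition together with the monotonicity recorded in Corollary~\ref{cor:scal-mon}. First I would observe that $T\mapsto S_{T}(t_0)$ is nondecreasing (Corollary~\ref{cor:scal-mon}) and bounded above by $1$, so the limit $S(t_0):=\lim_{t\to\infty}S_{t}(t_0)$ exists. Since \eqref{eq:CMJ0} gives $S_{t}(t_0)=\mathbb{P}_{t_0}(\mathcal{H}(\mathbb{T})\leq t)$ and the events $\{\mathcal{H}(\mathbb{T})\leq t\}$ increase to $\{\mathcal{H}(\mathbb{T})<\infty\}=\mathrm{Ext}$ as $t\to\infty$, continuity of $\mathbb{P}_{t_0}$ from below yields $\mathbb{P}_{t_0}(\mathrm{Ext})=\lim_{t\to\infty}S_{t}(t_0)=S(t_0)$.

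Second I would identify the two remaining events. The equality $\mathrm{Ext}=\{\exists\,t\geq 0,\ \Xi_{t}=0\}$ is immediate: a finite height forces $\Xi_{t}=0$ for every $t>\mathcal{H}(\mathbb{T})$, while conversely once $\Xi_{t}=0$ no further birth can occur, so $\mathcal{H}(\mathbb{T})\leq t$. For the length I would use that, by construction of the contour and Corollary~\ref{cor:mmloi}, $\mathcal{L}(\mathbb{T})$ equals the absorption time $\tau_{0}$ of $(X_{s})_{s\geq 0}$ at $0$, whence $\{\mathcal{L}(\mathbb{T})<\infty\}=\{\tau_{0}<\infty\}$. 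Corollary~\ref{cor:scal-mon} reads $S_{T}(t_0)=\mathbb{P}_{t_0}(\tau_{0}<\tau_{[T,+\infty)})$, and as $T\to\infty$ one has $\tau_{[T,+\infty)}\uparrow\infty$, so the events $\{\tau_{0}<\tau_{[T,+\infty)}\}$ increase to $\{\tau_{0}<\infty\}$; hence $\mathbb{P}_{t_0}(\mathcal{L}(\mathbb{T})<\infty)=\lim_{T\to\infty}S_{T}(t_0)=S(t_0)$ as well, and all four quantities agree.

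Finally, the monotonicity of $S$ is inherited from the pointwise limit of the decreasing maps $t_0\mapsto S_{T}(t_0)$ (Corollary~\ref{cor:scal-mon}). For the functional equation I would start from the defining identity of Theorem~\ref{th:def-scale},
$$
S_{T}(t)=e^{-\int_0^t b(u)\,du}+e^{-\int_0^t b(u)\,du}\int_0^t b(s)\,e^{\int_0^s b(u)\,du}\int_{[0,T-s)}S_{T}(s+v)\,K(s,dv)\,ds,
$$
and let $T\to\infty$. In the inner integral the integrand $S_{T}(s+v)\mathbf{1}_{v<T-s}$ is nondecreasing in $T$ (both $S_{T}$ and the domain of integration increase), so monotone convergence gives the limit $\int_{[0,+\infty)}S(s+v)\,K(s,dv)$; in the outer integral, taken over the compact set $[0,t]$, the integrand is dominated by $b(s)\,e^{\int_0^s b(u)\,du}$, which is integrable because $b$ is locally bounded, so dominated convergence applies and produces the announced equation.

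The point requiring the most care is the passage to the limit for the length, namely the claim $\tau_{[T,+\infty)}\uparrow\infty$, equivalent to the contour not reaching arbitrarily high levels in bounded time; on the event where this fails (if any) one has $\mathcal{L}(\mathbb{T})=\infty$ anyway, so the identity $\{\mathcal{L}(\mathbb{T})<\infty\}=\{\tau_{0}<\infty\}$ remains consistent. All the other steps are routine applications of \eqref{eq:CMJ0}, Corollary~\ref{cor:scal-mon}, and the two convergence theorems.
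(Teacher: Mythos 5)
Your proof is correct and follows essentially the same route as the paper's: existence and monotonicity of $S$ come from Corollary~\ref{cor:scal-mon}, the probabilistic identifications come from \eqref{eq:CMJ0} and the hitting-time interpretation of $S_{T}$, and the functional equation is obtained by passing to the limit in the fixed-point identity of Theorem~\ref{th:def-scale} via monotone convergence (the paper invokes Beppo Levi exactly where you use it, and your dominated-convergence step for the outer integral is an equivalent variant). The only difference is that you spell out the event identifications $\{\mathcal{H}(\mathbb{T})<\infty\}=\{\exists\, t,\ \Xi_t=0\}=\{\tau_0<\infty\}$ and the non-explosion caveat, which the paper's two-line proof leaves implicit.
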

\begin{proof}
Function $S$ is well-defined and decreasing thanks to Corollary~\ref{cor:scal-mon} and the functional equation holds because of the Beppo Levi Theorem.
\end{proof}

Again under the regularity assumptions of Corollary~\ref{cor:Eqscale}, $S$ is solution to the integro-differential equation 
\begin{equation}
\forall \mf{t} \geq 0, \quad S'(\mf{t}) = b(\mf{t}) \left(\int_{[0, +\infty)} S(\mf{t}+\mf{s}) K(\mf{t},d\mf{s}) -S(\mf{t}) \right).
\end{equation}

In the extinction case, as in the homogeneous setting (\cite[Proposition 5.8]{Lambert2010}, it is possible to establish Yaglom-type limiting result; that means convergence to a a quasi-stationary type distribution. This is not rigorously the case because we recall that $(\Xi_{\mf{t}})_{t\geq 0}$ is not a Markov process.

\begin{cor}[Quasi-limiting behavior of $(\Xi_t)$]
\label{cor:qsd}
Under Assumption~\ref{ass:b,K}, we have
\begin{itemize}
\item If $\lim_{\mf{t}\to \infty} S_{\mf{t}}(\mf{t}-)=q\in (0,1)$ then
\begin{equation}
\label{eq:CMJ-qsd}
\lim_{\mf{t}\to \infty} \mathbb{P}_{\mf{t}_0} \left( \Xi_{\mf{t}}=k \ | \ \Xi_{\mf{t}}\neq 0 \right) = q(1-q)^k
\end{equation}
\item If $\lim_{\mf{t}\to\infty} S_{\mf{t}}(\mf{t}-)=0$ then
\begin{equation}
\label{eq:CMJ-qsd-surcrit}
\forall x>0, \quad \lim_{\mf{t}\to \infty} \mathbb{P}_{\mf{t}_0} \left( \Xi_{\mf{t}} S_{\mf{t}}(\mf{t}-)>x \ | \ \Xi_{\mf{t}}\neq 0 \right) = e^{-x}.
\end{equation}
\end{itemize}
\end{cor}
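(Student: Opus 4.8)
The plan is to read off both limiting laws from the exact conditional distribution of $\Xi_t$ obtained in the preceding proposition. By \eqref{eq:CMJk}, conditionally on $\{\Xi_t\neq 0\}$ the variable $\Xi_t$ is geometric with parameter $p_t:=S_t(t-)$, so the whole statement reduces to an elementary asymptotic analysis of a geometric law as $p_t$ approaches its limit; no further probabilistic ingredient is needed beyond \eqref{eq:CMJk}, and the existence of $\lim_{t\to\infty}p_t$ is part of the hypothesis in each case.

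For the first case I would simply pass to the pointwise limit in \eqref{eq:CMJk}. For a fixed integer $k\geq 0$ the map $p\mapsto p(1-p)^k$ is continuous on $[0,1]$, hence the assumption $p_t\to q$ yields
\[
\mathbb{P}_{t_0}\left(\Xi_t=k\mid \Xi_t\neq 0\right)=p_t(1-p_t)^{k}\xrightarrow[t\to\infty]{} q(1-q)^{k},
\]
which is exactly \eqref{eq:CMJ-qsd}. This case is immediate.

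For the second case I would first sum the geometric weights to obtain the conditional tail: for every integer $n\geq 0$,
\[
\mathbb{P}_{t_0}\left(\Xi_t>n\mid \Xi_t\neq 0\right)=\sum_{k>n}p_t(1-p_t)^{k}=(1-p_t)^{n+1}.
\]
Since $\Xi_t$ takes integer values, for $x>0$ one has $\{\Xi_t\,p_t>x\}=\{\Xi_t>\lfloor x/p_t\rfloor\}$, so
\[
\mathbb{P}_{t_0}\left(\Xi_t\,p_t>x\mid \Xi_t\neq 0\right)=(1-p_t)^{\lfloor x/p_t\rfloor+1}.
\]
Taking logarithms and using $\log(1-p_t)\sim -p_t$ together with $\lfloor x/p_t\rfloor+1\sim x/p_t$ as $p_t\to 0$, the exponent tends to $-x$ and the right-hand side converges to $e^{-x}$. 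Recalling $p_t=S_t(t-)$, this is precisely \eqref{eq:CMJ-qsd-surcrit}; since $x\mapsto e^{-x}$ is continuous, this tail convergence is exactly convergence in law of $\Xi_t S_t(t-)$ to an exponential variable of parameter $1$.

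The computations are routine; the only point requiring care is the second case, where one must keep track of the floor function and verify that replacing $\lfloor x/p_t\rfloor+1$ by $x/p_t$ and $\log(1-p_t)$ by $-p_t$ produces a negligible error in the exponent as $p_t\to 0$. This is the (mild) main obstacle. Everything else follows directly from \eqref{eq:CMJk} by continuity.
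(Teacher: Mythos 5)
Your proposal is correct and follows exactly the route the paper intends: the paper offers no separate proof of this corollary, treating it as an immediate consequence of the geometric law \eqref{eq:CMJk}, and your pointwise-limit argument for the first case and floor-function/logarithm tail estimate for the second case are precisely the elementary computations being left to the reader. The only cosmetic remark is that \eqref{eq:CMJk} as printed supports the geometric law on $k\geq 0$ although conditioning on $\{\Xi_{\mf{t}}\neq 0\}$ forces $\Xi_{\mf{t}}\geq 1$ (an index shift in the paper's statement), but this shift is harmless for both limits you compute.
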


In case of L\'{e}vy process, it is direct that $\mf{t} \mapsto S_{\mf{t}}(\mf{t}-)$ is decreasing and then converges although it is not generally the case. Thus, the previous corollary  does not give an exhaustive description of possible behaviors.

However thanks to Corollary~\ref{cor:scal-mon}, we have, for all $\mf{s}\geq 0$,
$$
\lim_{t \to \infty} S_{\mf{t}} (\mf{t}-)\leq \lim_{t \to \infty} S_{\mf{t}} (\mf{s})=S(s)=\mathbb{P}_{\mf{s}}(\text{Ext}),
$$
and then $\lim_{t \to \infty} S_{\mf{t}} (\mf{t}-)\leq \lim_{t \to \infty} \mathbb{P}_{\mf{t}}(\text{Ext})$. In particular, this gives a sufficient condition to verify \eqref{eq:CMJ-qsd-surcrit} or an upper bound for $q$ in \eqref{eq:CMJ-qsd}.

Finally Corollary~\ref{cor:qsd} (Equation~\eqref{eq:CMJ-qsd-surcrit}) gives also the deterministic growth under non-extinction. Indeed, as 
$$
\mathbb{P}_{\mf{t}_0} \left( \text{Ext}\right)=\lim_{\mf{t}\to \infty} \mathbb{P}_{\mf{t}_0} \left( \Xi_{\mf{t}}=0\right)= \lim_{\mf{t}\to \infty} S_{\mf{t}}(\mf{t}_0),
$$
we have
$$
\lim_{\mf{t}\to \infty} \mathbb{P}_{\mf{t}_0} \left( \Xi_{\mf{t}} S_{\mf{t}}(\mf{t}-)>x \ | \ \Xi_{\mf{t}}\neq 0 \right)=\lim_{\mf{t}\to \infty} \mathbb{P}_{\mf{t}_0} \left( \Xi_{\mf{t}} S_{\mf{t}}(\mf{t}-)>x \ | \ \text{Ext}^c \right).
$$
Hereafter, we will say that the tree is supercritical when $\mathbb{P}_x(\mathcal{L}(\mathbb{T})=+\infty)>0$, for every $x\geq 0$. Several details are given in the next section. In the supercritical case, one can go further that this result by looking the tree conditioning on being finite or infinite. Indeed, From Corollary~\ref{cor:ext} (see also the proof of Theorem~\ref{prop:htrans} below), $S$ is an harmonic function, which is non-trivial, under the supercritical assumption, and one can then use a Doob $h$-transform (a Cameron-Martin-Girsanov type change of measure, see \cite{D57} or \cite[Page 83]{RW00}) to define a new (Markov) process which is distributed as the process conditioned on hitting $0$ (or never hit it).

Before going further, let us introduce some notation.
For any positive measurable function $f$ and $g$, we set
\[
\forall x\geq 0, 
\quad K^g(f):=K(gf)/K(g).
\]
We can now state our result on conditioned ISTs.

\begin{thm}[Conditioned supercritical tree]
\label{prop:htrans}
Suppose that $\mathbb{T}$ is supercritical and Assumption~\ref{ass:b,K} holds. 
We have
\begin{itemize}
\item The law of $\mathbb{T}$ conditionally on $\text{Ext}=\{\mathcal{L}(\mathbb{T})<+\infty\}$ is the same as an IST with birth rate $bKS/S$ and death kernel $K^S$. 
\item The law of $\mathbb{T}$ conditionally on $\text{Ext}^c=\{\mathcal{L}(\mathbb{T})=+\infty\}$ is the same as a IST with birth rate $b(1-KS)/(1-S)$ and death kernel $K^{(1-S)}$. 
\end{itemize}
\end{thm}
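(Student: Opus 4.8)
The plan is to argue entirely at the level of the contour process $(X_t)_{t\geq 0}$, which by Corollary~\ref{cor:mmloi} has the same law as $(\mathcal{C}_t(\mathbb{T}))_{t\geq 0}$ and, through the one-to-one tree--contour correspondence, encodes $\mathbb{T}$ bijectively and measurably; conditioning $\mathbb{T}$ on $\text{Ext}$ (resp.\ $\text{Ext}^c$) is therefore the same as conditioning $(X_t)$ on $\{\tau_0<\infty\}$ (resp.\ its complement), where $\text{Ext}=\{\mathcal{L}(\mathbb{T})<\infty\}=\{X \text{ hits } 0\}$. The cornerstone is that, writing $KS(x)=\int_{[0,\infty)}S(x+y)\,K(x,dy)$, the function $S(x)=\mathbb{P}_x(\text{Ext})$ of Corollary~\ref{cor:ext} is a bounded, strictly positive, harmonic function for the generator $L$: its functional equation rewrites exactly as $LS=-S'+b(KS-S)=0$, strict positivity follows from $S(t)\geq e^{-\int_0^t b(u)\,du}>0$, and $S<1$ is the supercriticality hypothesis. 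By linearity $L\mathbf{1}=0$, so $1-S$ is a second bounded, strictly positive harmonic function. I would then obtain the two bullets by Doob $h$-transforming $(X_t)$ with $h=S$ and $h=1-S$.

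For the first bullet I would first set up the change of measure. Since $S$ is absolutely continuous it belongs to the domain of the extended generator (Theorem~\ref{th:genext}) and $LS=0$, so by the It\^o--Dynkin formula $(S(X_t))_{t\geq 0}$ is a bounded martingale, which lets me define a consistent family $\mathbb{P}^S_x$ through $d\mathbb{P}^S_x/d\mathbb{P}_x|_{\mathcal{F}_t}=S(X_t)/S(x)$. To identify $\mathbb{P}^S_x$ with $\mathbb{P}_x(\,\cdot\mid\text{Ext})$, I would check that $\mathbb{E}_x[\mathbf{1}_{\text{Ext}}\mid\mathcal{F}_t]=S(X_t)$ --- true on $\{\tau_0\leq t\}$ because then $X_t=0$ and $S(0)=1$, and on $\{\tau_0>t\}$ by the Markov property --- whence $\mathbb{E}_x[\mathbf{1}_A\mathbf{1}_{\text{Ext}}]=S(x)\,\mathbb{E}^S_x[\mathbf{1}_A]$ for every $A\in\mathcal{F}_t$.

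It remains to compute and recognise the transformed generator. As $LS=0$ one has $L^Sf=\tfrac1S L(Sf)$, and a direct computation using $S'=b(KS-S)$ and $K\mathbf{1}=1$ collapses to
\[
L^Sf=-f'+\frac{b\,KS}{S}\bigl(K^Sf-f\bigr),
\]
which is precisely the contour generator of an IST with birth rate $bKS/S$ and death kernel $K^S$. After checking that these parameters meet Assumption~\ref{ass:b,K} --- $bKS/S$ is measurable and locally bounded since $S$ is continuous and bounded away from $0$ on compacts, and $K^S$ is weakly continuous since $S$ is continuous and positive --- Theorem~\ref{thm:genContour} and uniqueness for the associated martingale problem force $\mathbb{P}^S_x$ to be the law of that IST's contour; transporting back through the tree--contour bijection gives the first bullet. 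The second bullet is identical with $h=1-S$, where $K(1-S)=1-KS$ turns the rate into $b(1-KS)/(1-S)$ and the kernel into $K^{1-S}$.

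The hard part will be the rigour around $\{\mathcal{L}(\mathbb{T})=\infty\}$, where the non-truncated contour may explode (Section~\ref{ssec:jumpProc}), so that the martingale and extended-generator arguments are a priori valid only up to the explosion time. I would handle this by performing the whole $h$-transform on the truncated trees $\mathbb{T}^T$, for which $(X^{(T)}_t)$ is genuinely Feller with generator $L^{(T)}$ and $S_T$ is a bona fide bounded harmonic function, and then letting $T\to\infty$ using $S_T\to S$ (Corollary~\ref{cor:ext}) and the consistency of the conditioned laws; the tail identity $\mathbb{E}_x[\mathbf{1}_{\text{Ext}}\mid\mathcal{F}_t]=S(X_t)$ and the admissibility check for the new parameters are the remaining delicate points.
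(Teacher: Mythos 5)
Your overall strategy coincides with the paper's own proof: work at the level of the contour, observe that $S$ and $1-S$ are bounded, positive harmonic functions ($LS=0$ via the functional equation of Corollary~\ref{cor:ext}), perform the Doob $h$-transform, compute $L^hf=L(hf)/h$ to recognise the contour generator of an IST with birth rate $b\,Kh/h$ and death kernel $K^h$, and identify the $h$-transformed law with the conditioned law through the Markov property --- your tail identity $\mathbb{E}_x[\mathbf{1}_{\text{Ext}}\mid\mathcal{F}_t]=S(X_t)$ is exactly the computation the paper phrases as $P^S_tf(x)=\mathbb{E}_x[f(X_t)\mid \text{Ext}]$, followed by ``successive conditioning''.

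There is, however, one concrete flaw in how you close the second bullet. You assert that the transformed parameters ``meet Assumption~\ref{ass:b,K}'' and that the case $h=1-S$ is ``identical'' to the case $h=S$. This is false: since $S(0)=1$ while, by supercriticality, $KS(0)=\int S(y)\,K(0,dy)<1$, the birth rate $b(1-KS)/(1-S)$ diverges as $x\to 0$, so local boundedness fails precisely at the absorbing boundary. Consequently Theorem~\ref{thm:genContour}, Corollary~\ref{cor:mmloi} and martingale-problem uniqueness cannot be invoked verbatim for this set of parameters, and the transformed process could a priori explode (accumulate infinitely many jumps near $0$). You do flag explosion as ``the hard part'', but your proposed remedy --- $h$-transforming the truncated trees with $S_T$ and letting $T\to\infty$ --- is only a sketch, and the passage to the limit would have to confront exactly the same non-explosion question (note also that the paper proves the finite-window analogue, Theorem~\ref{prop:htrans-fixe}, by the same direct argument, not by a truncation limit). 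The paper's resolution of this point is short and worth recording: with $h=1-S$ one checks $L^h(1/h)=0$, so $1/h$, which tends to $+\infty$ at the problematic boundary, is harmonic for the conditioned dynamics, and the non-explosion criterion of \cite[Theorem 2.1]{MT93III} applies. With that Lyapunov certificate inserted in place of your admissibility claim, the rest of your argument matches the paper's proof.
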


\begin{proof}
Let $(P_t)_{t\geq 0}$ be the Markov semigroup of $(X_t)_{t\geq 0}$. Using the Markov property, we easily see that for every $\mf{x}\geq 0$ that 
$$
\forall t\geq0, \quad P_{t} S(\mf{x})= \mathbb{E}_{\mf{x}}\left[\mathbb{P}_{X_{t}}(\exists s\geq 0, \ X_{s}=0)\right] = \mathbb{P}_{\mf{x}}\left(\exists s\geq 0, \ X_{s}=0)\right) = S(\mf{x}).
$$
This shows that the function $S$ is an harmonic function. Let $(P^S_t)_{t\geq0}$ defined, for every continuous and bounded functions $f:\mathbb{R}_+^* \to \mathbb{R}$ vanishing at $0$ (and that we extend in $0$ by $f(0)=0$),   by
$$
P^S_t f=\frac{P_t(S f)}{P_t S} = \frac{P_t(S f)}{S}
$$
is a Markov semigroup on $\mathbb{R}_+^*=(0,+\infty)$. We have, for all $f$ such that $f/S\in D(L)$, $\partial_t P^S_t f= P^S_t L^S f$ where,
$$
L^S f(\mf{x})= \frac{L(fS)(\mf{x})}{S(\mf{x})}=-f'(\mf{x}) + \frac{b(\mf{x}) KS(\mf{x})}{S(\mf{x})} \left(\frac{K(S f)(\mf{x})}{KS (\mf{x})} - f(\mf{x})  \right).
$$
 This semigroup then correspond to the semigroup of the PDMP which decreases linearly (with rate $1$) and jumps at rate $b KS/S$ with kernel $K^S$. Indeed this is a slight variation of Corollary~\ref{cor:mmloi}.
 
 Using again the Markov property, we have, for every continuous and bounded functions $f: \mathbb{R}_+^* \to \mathbb{R}$, $t>0$ and $\mf{x}>0$,
$$
P_r^S f (\mf{x})= \frac{\mathbb{E}_{\mf{x}}\left[ f(X_t) \mathbb{P}_{X_{t}}(\exists s\geq 0, \ X_{s}=0)\right] }{\mathbb{P}_{\mf{x}}\left(\exists s\geq 0, \ X_{s}=0\right) }= \frac{\mathbb{E}_{\mf{x}}\left[ f(X_t) \mathbf{1}_{\{\exists s\geq t, \ X_{s}=0\}} \right] }{\mathbb{P}_{\mf{x}}\left(\exists s\geq 0, \ X_{s}=0\right) } = \mathbb{E}_{\mf{x}}\left[ f(X_t) \ | \ \exists s\geq 0, \ X_{s}=0\right].
$$
In other words, the law of $X_t$ conditioned to the extinction is described by $\delta_t P_{t}$. By successive conditioning, it gives that the law of the process $(X_t)_{t\geq 0}$ conditionally on extinction is the same as the Markov process with semigroup $(P_t)_{t\geq 0}$.

The proof is similar when conditioning on non-extinction with $h=1-S$ as harmonic function. We then set $(P_t^h)$ and $\mathcal{L}^h$ the operators $(P_t(\cdot \times h)/h)_{t\geq 0}$ and $\mathcal{L}(\cdot \times h)/h$.  The only difference is that $h$ is not bounded around $0$, and we need to prove that the associated Markov process does not explode. But as $\mathcal{L}^h(1/h) =0$, \cite[Theorem 2.1]{MT93III} ends the proof.
\end{proof}

We also have an equivalent theorem when conditioning any tree on finite biologic time window.

\begin{thm}[Conditioned tree on finite windows]
\label{prop:htrans-fixe}
Let $T>0$ and suppose Assumption~\ref{ass:b,K} holds, we have
\begin{itemize}
\item The law of $\mathbb{T}$ conditionally on  $\mathcal{H}(\mathbb{T}) \leq T$ is the same as an IST with birth rate $bKS_T/S_T$ and death kernel $K^{S_T}$. 
\item Assume further that $S_T(x)<1$, for all $x\geq 0$, the law of $\mathbb{T}$ conditionally on $\mathcal{H}(\mathbb{T}) > T$ is the same as a IST with birth rate $b(1-KS_T)/(1-S_T)$ and death kernel $K^{(1-S_T)}$. 
\end{itemize}
\end{thm}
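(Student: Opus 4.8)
The plan is to mirror the proof of Theorem~\ref{prop:htrans}, replacing the globally harmonic function $S$ by $S_{\mf{T}}$, which here plays the role of a harmonic function for the contour process stopped upon entering $[\mf{T},+\infty)$. Recall from Theorem~\ref{th:def-scale} that $(S_{\mf{T}}(X_{t\wedge \tau_{0}\wedge \tau_{[\mf{T},+\infty)}}))_{t\geq 0}$ is a bounded martingale with $LS_{\mf{T}}=0$ on $[0,\mf{T})$, and from Corollary~\ref{cor:scal-mon} together with \eqref{eq:CMJ0} that $S_{\mf{T}}(\mf{x})=\mathbb{P}_{\mf{x}}(\tau_{0}<\tau_{[\mf{T},+\infty)})=\mathbb{P}_{\mf{x}}(\mathcal{H}(\mathbb{T})\leq \mf{T})$. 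Since $\{\mathcal{H}(\mathbb{T})\leq \mf{T}\}$ is exactly the event that the contour hits $0$ before reaching level $\mf{T}$ and, on this event, the tree is finite (the absorption time $\tau_{0}$, equal to the total length, is a.s.\ finite so the contour is well defined), conditioning the tree on this event amounts to conditioning the stopped contour on $\{\tau_{0}<\tau_{[\mf{T},+\infty)}\}$, which is achieved by a Doob $h$-transform with $h=S_{\mf{T}}$.

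For the first bullet I would proceed as follows. The martingale property makes $S_{\mf{T}}$ harmonic for the stopped semigroup $\widetilde{P}_{t}$ (the contour stopped at $\tau_{[\mf{T},+\infty)}$ and absorbed at $0$), and since $S_{\mf{T}}>0$ on $[0,\mf{T})$ the transform $\widetilde{P}^{S_{\mf{T}}}_{t}f = \widetilde{P}_{t}(S_{\mf{T}}f)/S_{\mf{T}}$ is a well-defined Markov semigroup on $(0,\mf{T})$. Exactly as in Theorem~\ref{prop:htrans}, one computes
\[
L^{S_{\mf{T}}}f(\mf{x})=\frac{L(f S_{\mf{T}})(\mf{x})}{S_{\mf{T}}(\mf{x})}=-f'(\mf{x})+\frac{b(\mf{x})K S_{\mf{T}}(\mf{x})}{S_{\mf{T}}(\mf{x})}\left(\frac{K(S_{\mf{T}}f)(\mf{x})}{K S_{\mf{T}}(\mf{x})}-f(\mf{x})\right),
\]
which is the generator of the PDMP decreasing at rate $1$, jumping at rate $bK S_{\mf{T}}/S_{\mf{T}}$ with kernel $K^{S_{\mf{T}}}$; by the variant of Corollary~\ref{cor:mmloi} this is the contour of an IST with those parameters. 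Finally, the Markov property gives, for bounded continuous $f$ vanishing at $0$,
\[
\widetilde{P}^{S_{\mf{T}}}_{t}f(\mf{x})=\frac{\mathbb{E}_{\mf{x}}\!\left[f(X_{t})\,\mathbb{P}_{X_{t}}(\tau_{0}<\tau_{[\mf{T},+\infty)})\,\mathbf{1}_{t<\tau_{0}\wedge\tau_{[\mf{T},+\infty)}}\right]}{\mathbb{P}_{\mf{x}}(\tau_{0}<\tau_{[\mf{T},+\infty)})}=\mathbb{E}_{\mf{x}}\!\left[f(X_{t})\mid \tau_{0}<\tau_{[\mf{T},+\infty)}\right],
\]
and successive conditioning identifies the whole law of the conditioned contour, hence of the conditioned tree through the contour/tree correspondence (the conditioning event being a functional of the contour).

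For the second bullet I would take $h=1-S_{\mf{T}}$, which is harmonic for the stopped process as well since it equals $\mathbb{P}_{\mf{x}}(\tau_{[\mf{T},+\infty)}<\tau_{0})$; the extra assumption $S_{\mf{T}}<1$ ensures $h>0$, so the transform is well defined and the same computation yields the rate $b(1-K S_{\mf{T}})/(1-S_{\mf{T}})$ and kernel $K^{(1-S_{\mf{T}})}$. The main obstacle, and the reason the two bullets are not symmetric, is the behaviour above level $\mf{T}$: conditioning on $\{\mathcal{H}(\mathbb{T})>\mf{T}\}$ places no constraint on the tree beyond height $\mf{T}$, so one must check that the single-IST description with the modified parameters stays valid there. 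This is automatic, since for $\mf{x}\geq \mf{T}$ one has $S_{\mf{T}}(\mf{x})=0$ and $K S_{\mf{T}}(\mf{x})=0$, whence the modified rate reduces to $b$ and $K^{(1-S_{\mf{T}})}$ reduces to $K$; combined with the branching property of the tree above $\mf{T}$, this shows the conditioned tree evolves there as the original IST. The one remaining technical point, exactly as in Theorem~\ref{prop:htrans}, is the non-explosion of this survival-type conditioned process, which follows from $\mathcal{L}^{h}(1/h)=0$ and \cite[Theorem 2.1]{MT93III}.
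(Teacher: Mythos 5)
Your proposal is correct and takes essentially the same route as the paper: the paper's entire proof of this theorem is the remark that it is ``the same as Theorem~\ref{prop:htrans}'', i.e.\ a Doob $h$-transform of the contour with $h=S_T$ (resp.\ $h=1-S_T$), which is exactly what you carry out. The extra details you supply beyond that one line --- working with the stopped semigroup, checking that the modified parameters reduce to $(b,K)$ above level $T$ so the description glues correctly with the unconstrained part of the tree, and the positivity/non-explosion points --- are precisely the adaptations the paper leaves implicit, and you handle them correctly.
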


Note that sufficient (and almost necessary) condition for $S(x)<1$ for all $x\geq 0$ will be given by Theorem~\ref{th:dichotomy}.

\begin{proof}
The proof is the same as Theorem~\ref{prop:htrans}.
\end{proof}

\subsection{Applications to particular cases}
\label{ssec:example-scale}

For L\'evy processes, there exists a large amount of cases where the scale function is explicit, see for instance \cite{scaleFunctionKyp}. In this subsection, we give some explicit solutions of Equation~\eqref{eq:gen-edo-scale} for time-inhomogeneous versions of classical examples but also for specially time-inhomogeneous tree. We also detail some outcome of Subsection~\ref{ssect:scale-tree} where the scale function are known.

\subsubsection{Deterministic life duration}
Here we assume that $K(\mf{t},d\mf{u}) = \delta_1$, meaning that individuals live a fixed time equals to $1$. The map $\mf{t}\mapsto K(\mf{t},[0,\mf{T}-\mf{t}]) = \mathbf{1}_{\{\mf{t}+1\leq \mf{T}\}}$ is not continuous. We have
\begin{align*}
 S_{\mf{T}} (\mf{t}) 
 &=e^{- \int_0^\mf{t} R(\mf{u}) d\mf{u}} + e^{- \int_0^\mf{t} R(\mf{u}) d\mf{u}} \int_0^\mf{t} R(\mf{s}) e^{\int_0^\mf{s} R(\mf{u}) d\mf{u}} S_{\mf{T}}(\mf{s}+1) \mathbf{1}_{\mf{s}+1 < \mf{T}} d\mf{s} \\
 &=e^{- \int_0^\mf{t} R(\mf{u}) d\mf{u}}+ e^{- \int_0^\mf{t} R(\mf{u}) d\mf{u}} \int_0^{\mf{t}\wedge \mf{T}-1}  R(\mf{s}) e^{\int_0^\mf{s} R(\mf{u}) d\mf{u}} S_{\mf{T}}(\mf{s}+1)d\mf{s}.
\end{align*}
In particular, for $\mf{t} \in [\mf{T}-1,\mf{T})$, $S_{\mf{T}}(\mf{t}) = e^{- \int_0^\mf{t} R(\mf{u}) d\mf{u}} C$, for some $C>0$ and then we recover the property $\mathbb{P}_\mf{t}(T_{[0,\mf{T}-1]} < T_{[\mf{T},+\infty)})=e^{- \int_{(\mf{T}-1)}^\mf{t} R(\mf{u}) d\mf{u}}= S_{\mf{T}}(\mf{t})/S_{\mf{T}}(\mf{T}-1)$ that can be obtain by regarding if the process has a jump before hitting $\mf{T}-1$.

\subsubsection{Fixed death moment}
Let us consider fixed death times; for instance at times $1$ or $2$.  For $x\in [0,1]$,
$$
K(\mf{t},d\mf{u}) = \frac{1}{2} \delta_{1-\mf{t}} (d\mf{u}) + \frac{1}{2} \delta_{2-\mf{t}}(d\mf{u}) 
$$
and for $\mf{t}\in (1,2]$, $K(\mf{t},d\mf{u}) =  \delta_{2-\mf{t}}(d\mf{u}).$ Then for $\mf{t}\leq1$,
\begin{align*}
S_{\mf{T}} (x) 
&=e^{- \int_0^\mf{t} R(\mf{u}) d\mf{u}} + e^{- \int_0^\mf{t} R(\mf{u}) d\mf{u}} \int_0^\mf{t} R(\mf{s}) e^{- \int_0^\mf{s} R(\mf{u}) d\mf{u}} \frac{1}{2} (S(1) \mathbf{1}_{\mf{T}\geq 1} + S(2) \mathbf{1}_{\mf{T}\geq 2}) d\mf{s}\\
&= e^{- \int_0^\mf{t} R(\mf{u}) d\mf{u}} + \left(1 - e^{- \int_0^\mf{t} R(\mf{u}) d\mf{u}} \right) C_\mf{T},
\end{align*}
for some $C_\mf{T}>0$. For $\mf{t}\in [1,2)$ 
\begin{align*}
S_{\mf{T}}(x) &=e^{- \int_0^\mf{t} R(\mf{u}) d\mf{u}} + e^{- \int_0^\mf{t} R(\mf{u}) d\mf{u}}\left( \int_0^1 R(\mf{s}) e^{- \int_0^\mf{s} R(\mf{u}) d\mf{u}} C_\mf{T} d\mf{s} + \int_1^\mf{t} R(\mf{s}) e^{- \int_0^\mf{s} R(\mf{u}) d\mf{u}} S(2) \mathbf{1}_{\mf{T}\geq 2} d\mf{s} \right) \\
&= e^{- \int_0^\mf{t} R(\mf{u}) d\mf{u}} +  \left(1 - e^{- \int_0^1 R(\mf{u}) d\mf{u}} \right) C_\mf{T} + \left(e^{- \int_0^1 R(\mf{u}) d\mf{u}} - e^{- \int_0^\mf{t} R(\mf{u}) d\mf{u}} \right) c_\mf{T},
\end{align*}
for some $c_\mf{T}>0$. If $S(2)=0$ then $c_\mf{T}=0, C_\mf{T}=2 e^{- \int_0^1 R(\mf{u}) d\mf{u}}  /\left(1 + e^{- \int_0^1 R(\mf{u}) d\mf{u}} \right)$, and for all $\mf{t}\in [0,2)$,
$$
S_{\mf{T}}(x) = e^{- \int_0^\mf{t} R(\mf{u}) d\mf{u}} + 2 e^{- \int_0^1 R(\mf{u}) d\mf{u}}  \frac{ 1 - e^{- \int_0^{\mf{t}\wedge 1} R(\mf{u}) d\mf{u}} }{1 + e^{- \int_0^1 R(\mf{u}) d\mf{u}} }.
$$
In particular $S$ is not differentiable at $\mf{t}=1$.

\subsubsection{Inhomogeneous-time Markovian tree}
Let $K(\mf{t},d\mf{u}) =d(\mf{t}+\mf{u}) e^{-\int_\mf{t}^{\mf{t}+ \mf{u}} d(\mf{v}) d\mf{v} } d\mf{u}$ and the equation of Corollary \ref{cor:Eqscale} reads
\begin{align}
S'_{\mf{T}}(x) 
&= b(\mf{t}) \left(\int_0^{\mf{T}-\mf{t}} S_{\mf{T}}(\mf{t}+\mf{s}) d(\mf{t}+\mf{s}) e^{-\int_\mf{t}^{\mf{t}+ \mf{s}} d(\mf{u}) d\mf{u} } d\mf{s} -S_{\mf{T}}(\mf{t}) \right)\nonumber\\
&=b(\mf{t}) \left(\int_\mf{t}^{\mf{T}} S_{\mf{T}}(\mf{s}) d(\mf{s}) e^{-\int_\mf{t}^{\mf{s}} d(\mf{u}) d\mf{u} } d\mf{s} -S_{\mf{T}}(\mf{t}) \right).\label{eq:scalemarkov}
\end{align}
In particular, if $b,d$ are $C^1$ then
\begin{align*}
S''_{\mf{T}}(\mf{t}) &= b'(\mf{t})  \left(\int_\mf{t}^{\mf{T}} S_{\mf{T}}(\mf{s}) d(\mf{s}) e^{-\int_\mf{t}^{\mf{s}} d(\mf{u}) d\mf{u} } d\mf{s} -S_{\mf{T}}(\mf{t}) \right)\\
&- b(\mf{t}) S_{\mf{T}}(\mf{t}) d(\mf{t}) + b(\mf{t}) \int_\mf{t}^{\mf{T}} S_{\mf{T}}(\mf{s}) d(\mf{s}) d(\mf{t}) e^{-\int_\mf{t}^{\mf{s}} d(\mf{u}) d\mf{u} } d\mf{s} -b(\mf{t}) S_\mf{T}'(\mf{t}).
\end{align*}
Using Equation \eqref{eq:scalemarkov}, this can be simplified to
$
S''_\mf{T}(\mf{t})= \frac{b'(\mf{t})}{b(\mf{t})} S_\mf{T}'(\mf{t}) +d(\mf{t}) S_\mf{T}'(\mf{t})  - b(\mf{t}) S'_{\mf{T}}(\mf{t})
$
with also the following boundary condition:
$
S_\mf{T}(0)=1, \ S'_\mf{T}(\mf{T})= -b(\mf{T})S(\mf{T}).
$
This simple equation can be solved. Indeed
$$
S'_\mf{T}(\mf{t}) =-b(\mf{T}) S_\mf{T}(\mf{T}) e^{-\int_\mf{t}^\mf{T} \frac{b'(\mf{u})}{b(\mf{u})} +d(\mf{u}) - b(\mf{u}) d\mf{u} },
$$
and
$$
S_\mf{T}(\mf{t})= 1 -b(\mf{T}) S(\mf{T}) \int_0^\mf{t} e^{-\int_\mf{s}^\mf{T} \frac{b'(\mf{u})}{b(\mf{u})} +d(\mf{u}) - b(\mf{u}) d\mf{u} } d\mf{s}.
$$
Using $\mf{t}=\mf{T}$, we finally have
\begin{align*}
S_\mf{T}(\mf{t})
&= 1 -\frac{b(\mf{T}) \int_0^\mf{t} e^{-\int_\mf{s}^\mf{T} \frac{b'(\mf{u})}{b(\mf{u})} +d(\mf{u}) - b(\mf{u}) d\mf{u} } d\mf{s}}{1+b(\mf{T}) \int_0^\mf{T} e^{-\int_\mf{s}^\mf{T} \frac{b'(\mf{u})}{b(\mf{u})} +d(\mf{u}) - b(\mf{u}) d\mf{u} } d\mf{s}}
=\frac{1+b(\mf{T}) \int_\mf{t}^\mf{T} e^{-\int_\mf{s}^\mf{T} \frac{b'(\mf{u})}{b(\mf{u})} +d(\mf{u}) - b(\mf{u}) d\mf{u} } d\mf{s}}{1+b(\mf{T}) \int_0^\mf{T} e^{-\int_\mf{s}^\mf{T} \frac{b'(\mf{u})}{b(\mf{u})} +d(\mf{u}) - b(\mf{u}) d\mf{u} } d\mf{s}}\\
&=\frac{1+ \int_\mf{t}^\mf{T} b(\mf{s}) e^{-\int_\mf{s}^\mf{T} d(\mf{u}) - b(\mf{u}) d\mf{u} } d\mf{s}}{1+ \int_0^\mf{T} b(\mf{s}) e^{-\int_\mf{s}^\mf{T} d(\mf{u}) - b(\mf{u}) d\mf{u} } d\mf{s}}.
\end{align*}
In particular, we recover \cite[Exercise 3.2.8]{L17}. When $b\neq d$ are constant, we have
$$
S_\mf{T}(\mf{t}) =  \frac{d-b e^{(b-d)(\mf{T}-\mf{t})}}{d-be^{(b-d)\mf{T}}},
$$
so we recover the classical function \cite{CLR},
$$W(t)=\frac{d-b e^{(b-d)\mf{t}}}{d-b}.$$

\subsubsection{Supercritical time-homogeneous tree}

Let us consider the case of a standard homogeneous splitting tree. In such situation, $b$ is constant and the kernel $K(dy)$ is not time-dependent. Moreover, if it is supercritical then we have $S(t)=e^{-\alpha t}$ for some $\alpha>0$.

In such situation the semigroup of the contour process of the tree conditioned on $\text{Ext}$ provided by Theorem~\ref{prop:htrans} is given by $L^S$ reading as
\begin{align*}
L^Sf(x) &= -f'(y)+  b\int_{\mathbb{R}_{+}} e^{-\alpha y} K(dy) \left(\frac{\int_{\mathbb{R}_{+}} e^{-\alpha y} f(x+y) K(dy)}{\int e^{-\alpha y} K(dy)} - f(x) \right).\\
\end{align*}
In this case, we recover, the results of \cite{Lambert2010}. A more interesting situation is the case of splitting tree conditioned on non-extinction. In this case we get that the generator of the contour of the conditioned tree reads
$$
L^h f(x) = -f'(y)+  b\frac{\int_{\mathbb{R}_{+}} (1-e^{-\eta (x+y)}) K(dy)}{(1-e^{-\eta x})} \left(\frac{\int_{\mathbb{R}_{+}} (1-e^{-\eta(x+y)}) f(x+y) K(dy)}{\int (1-e^{-\eta (x+y)}) K(dy)} - f(x) \right),
$$
which shows that the conditioned time-homogeneous tree becomes time-inhomogeneous. However, it tends to recover its homogeneity in the long time limit (in a heuristic sense). Also, a Markovian tree $K(dy)= de^{-dy}$ looses the Markov property and becomes an IST (when conditioned on non-extinction).

\section{Lyapounov functions: sufficient condition and tail estimates}
\label{sec:lyap}

Properties of the tree given in Subsection~\ref{ssect:scale-tree} depends crucially on the scale function. In this section, we adopt another point of view based on usual reducibility conditions for Markov processes. They are detailed in the following section. A subsection with some application ends the present section.

\subsection{General condition}

The aim of this section is to generalize \cite[Proposition 2.2 p. 12]{Lambert2010} which links the probability to extinction (finiteness of the tree) to the drift of the contour process. However, in contrast with the case where the contour is a L\'evy process (see for instance \cite[Corollary 2 p. 190]{B98}), there is no simple formulation on the drift describing the long time behaviour of a general PDMP. 

We begin by introducing the following necessary condition for supercriticality.

\begin{assu}[No age barrier]
 \label{hyp:ageBarrier}
$
\mathbb{P}_{x}\left(\exists t>0, \ X_t=y \right)>0,\quad  \forall x,y>0.
$ 

\end{assu}

As usual in branching theory, we still have

\begin{thm}[Extinction and explosion: the merciless dichotomy]
\label{th:dichotomy}
Let $\tau_{0}$ be the hitting time of $0$. Then, under Assumption~\ref{ass:b,K} and Assumption~\ref{hyp:ageBarrier}, 
$$
\mathbb{P}_x( \tau_0 < + \infty \text{ or } \lim_{t\to \infty} X_t=+\infty)=1.
$$
Equivalently either the height $\mathcal{H}(\mathbb{T})$ of the tree is infinite (and then the length $\mathcal{L}(\mathbb{T})$ is also infinite) or $\mathcal{L}(\mathbb{T})$ is finite. Moreover, 
$$
\exists x>0, \ \mathbb{P}_x( \tau_0 = + \infty)>0 \Leftrightarrow \forall x>0, \ \mathbb{P}_x( \tau_0 = + \infty)>0.
$$
Equivalently the definition of the supercriticality of the tree does not depend on the initial age.
 \end{thm}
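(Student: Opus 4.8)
The plan is to prove the two assertions separately, reducing the tree statements to path properties of the PDMP $(X_t)_{t\geq 0}$ via Corollary~\ref{cor:mmloi}: the length $\mathcal{L}(\mathbb{T})$ is the absorption time $\tau_0$ of the contour and $\mathcal{H}(\mathbb{T})=\sup_t X_t$, so that $\{\mathcal{H}(\mathbb{T})=\infty\}=\{\sup_t X_t=\infty\}$ and $\{\mathcal{L}(\mathbb{T})<\infty\}=\{\tau_0<\infty\}$. Since $\mathcal{H}\leq\mathcal{L}$ holds automatically, the entire content of the first part is the dichotomy $\mathbb{P}_x(\tau_0<\infty\text{ or } X_t\to\infty)=1$.

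For this dichotomy I would fix a level $M>0$ and show $\{X_t\leq M\ \text{i.o.}\}\subseteq\{\tau_0<\infty\}$ up to a null set; letting $M\to\infty$ then forces $X_t\to\infty$ on $\{\tau_0=\infty\}$. The one quantitative input is that, because $b$ is locally bounded (Assumption~\ref{ass:b,K}), from any position $z\leq M$ the probability that the process descends all the way to $0$ with no intervening jump equals $\exp(-\int_0^z b)\geq \exp(-\int_0^M b)=:p_0>0$. Two observations then combine. First, at each downcrossing time of $M$ (a stopping time at which $X=M$ exactly, since the downward motion is continuous and there are no negative jumps) the strong Markov property produces a fresh descent that reaches $0$ with probability $\geq p_0$ independently of the past; hence $M$ is downcrossed only finitely often a.s., so on $\{X_t\leq M\ \text{i.o.}\}$ one in fact has $X_t\leq M$ for all large $t$. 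Second, on this confinement event I would sample along the \emph{disjoint} windows $[nM,(n+1)M]$ and use that $\mathbb{P}(\text{the descent from }X_{nM}\text{ hits }0\text{ inside }[nM,(n+1)M]\mid\mathcal{F}_{nM})\geq p_0$ whenever $X_{nM}\leq M$; the L\'evy conditional Borel--Cantelli lemma then forces one such descent to succeed, i.e. $\tau_0<\infty$. This is the step I expect to be the main obstacle, as one must choose the stopping times and the non-overlapping windows carefully so that the uniform bound $p_0$ feeds cleanly into the conditional Borel--Cantelli argument.

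For the second part the implication $\Leftarrow$ is trivial, and for $\Rightarrow$ I would invoke Assumption~\ref{hyp:ageBarrier} together with the strong Markov property. Assume $\mathbb{P}_{x_0}(\tau_0=\infty)>0$ for some $x_0>0$ and fix any $y>0$. Writing $H_{x_0}=\inf\{t\geq 0:\ X_t=x_0\}$, the no-age-barrier assumption gives $\mathbb{P}_y(H_{x_0}<\infty)>0$, and since $0$ is absorbing, reaching $x_0$ forces $\tau_0>H_{x_0}$. Applying the strong Markov property at $H_{x_0}$ then yields
\[
\mathbb{P}_y(\tau_0=\infty)\ \geq\ \mathbb{P}_y(H_{x_0}<\infty)\,\mathbb{P}_{x_0}(\tau_0=\infty)\ >\ 0,
\]
which is precisely the claimed independence of supercriticality on the initial age.

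Finally I would record the translation back to the tree and flag the only delicate point of interpretation, namely the possible explosion when $T=+\infty$: on $\{\tau_0=\infty\}$ the argument shows that $X$ leaves every compact set, which corresponds to $\mathcal{H}(\mathbb{T})=\infty$ and is consistent with $\mathcal{L}(\mathbb{T})=\infty$. Thus either $\mathcal{H}(\mathbb{T})=\infty$ (whence $\mathcal{L}(\mathbb{T})=\infty$) or $\mathcal{L}(\mathbb{T})=\tau_0<\infty$, as stated.
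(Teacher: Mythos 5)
Your proof is correct, but it takes a genuinely different route from the paper's. The paper disposes of the dichotomy in two lines by citing stability theory for Markov processes: under Assumption~\ref{hyp:ageBarrier} the contour is a $\delta_0$-irreducible $T$-process, and \cite[Theorem~3.2]{MT93II} (Harris recurrence is equivalent to non-evanescence) immediately gives $\mathbb{P}_x(\tau_0<\infty \text{ or } \lim_{t\to\infty}X_t=+\infty)=1$; the second assertion is dismissed as ``direct''. You instead argue by hand: the uniform lower bound $p_0=\exp\bigl(-\int_0^M b(u)\,du\bigr)>0$ on the probability of an uninterrupted descent to $0$ from any level in $[0,M]$, a geometric-trials argument showing that each level $M$ is downcrossed only finitely often, and L\'evy's conditional Borel--Cantelli lemma on the disjoint windows $[nM,(n+1)M]$ (chosen of length $M$ precisely so a descent fits inside one window) to convert eventual confinement below $M$ into absorption. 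Both arguments are sound; what yours buys is (i) it avoids verifying the $T$-process property and $\psi$-irreducibility, which the paper does not detail, (ii) it makes the role of the hypotheses transparent---your dichotomy step uses only the local boundedness of $b$ from Assumption~\ref{ass:b,K}, so Assumption~\ref{hyp:ageBarrier} enters only in the second assertion, where your strong Markov argument through the hitting time $H_{x_0}$ (using that $0$ is absorbing, so reaching $x_0$ forces $\tau_0>H_{x_0}$) is exactly the ``direct'' argument the paper leaves implicit, and (iii) confinement below a level forces a bounded jump rate and hence rules out explosion, which handles the $T=+\infty$ subtlety you flag. What the paper's citation buys is brevity and coherence with the Meyn--Tweedie framework it reuses elsewhere (e.g.\ the drift criteria of Section~\ref{sec:lyap} and the tail estimates of Proposition~\ref{prop:tail}).
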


\begin{proof}
From Theorem \ref{thm:genContour}, it is enough to prove that either the hitting time $\tau_0$ of $0$ is finite or $\lim_{t\to \infty} X_t=+\infty$. But as the process $(X_t)_{t\geq 0}$ is a $\delta_0-$ irreducible (from Assumption~\ref{hyp:ageBarrier}) $T-$process, this is a direct consequence of  \cite[Theorem 3.2]{MT93II} ($\delta_0-$Harris recurrence is equivalent to absorption in finite time). The second point is direct.

\end{proof}
\begin{rem}
Remark that Assumption~\ref{ass:b,K} implies the local boundedness of $b$. This implies that there can only be a finite number of individuals in the truncated tree $\mathbb{T}^{(T)}$, which implies $\mathcal{L}(\mathbb{T}^{(T)})<\infty$. In particular, $\mathcal{L}(\mathbb{T})<\infty$ if and only if $\mathcal{H}(\mathbb{T})<\infty$. Let us insist on the fact that this relies on the local boundedness of $b$.
\end{rem}

From the Theorem~\ref{th:dichotomy} and from the classical definition for Galton-Watson processes or homogeneous splitting tree, we can define the typical behaviors of the population.

\begin{enumerate}
\item The supercritical case:  if one of the following equivalent assertions holds
\begin{itemize}
\item For all $x>0$, $\mathbb{P}_x(\lim_{t\to \infty} X_t=+\infty)=1-\mathbb{P}_x(\tau_0<+\infty)>0,$
\item $\mathbb{P}_{x}(\mathcal{L}(\mathbb{T}) =+\infty)>0,$
\item $\mathbb{P}_{x}\left( \mathcal{H}(\mathbb{T})=+\infty \right)>0$.
\end{itemize}
\item The critical case:  if one of the following equivalent assertions holds
\begin{itemize}
\item  $\tau_0<+\infty$ almost surely but $\mathbb{E}_x[\tau_0]= + \infty$, for all $x>0$.
\item $\mathbb{P}_{x}(\mathcal{L}(\mathbb{T}) =+\infty))$ but $\mathbb{E}_{x}[\mathcal{L}(\mathbb{T})]<+\infty$, for all $x>0$.
\end{itemize}
\item The subcritical case:  if one of the following equivalent assertions holds
\begin{itemize}
\item $\tau_0<+\infty$ almost surely and $\mathbb{E}_x[ \tau_0]< + \infty$, for all $x>0$.
\item $\mathbb{E}_{x}[\mathcal{L}(\mathbb{T})]<+\infty$, for all $x>0$.
\end{itemize}
\end{enumerate}
This division also comes from the different recurrence notions for the contour. Note that the situation is not as homogeneous-time Galton-Watson for whose we have
$$
\mathbb{E}[\tau_0]<\infty \Rightarrow \exists \theta>0, \ \mathbb{E}_\mf{t}[e^{\theta \tau_0}]< + \infty.
$$
Let us also highlight that, as in the Galton-Watson case, a non-supercritical population almost surely extinct. Furthermore, even if the notion of supercritical is clear (in view of our results), it is not clear for the moment if we chose the good notion of subcriticality or criticality.

\bigskip

As pointed out in the beginning of this section, it is not easy to have simple condition to ensure in which case we are. Let us end this section by giving sufficient condition using drift-type conditions. These conditions are based on classical Lyapunov functions. Even they may not be optimal, their proof gives a method to show how exploit the contour to answer some questions on the size of the tree.

First of all, let us set, for all $x\geq 0$,
\begin{equation}
\label{eq:moments}
m(x) = \int_{\mathbb{R}_{+}}y\,  K(x,dy), \quad m_p(x) = \int_{\mathbb{R}_{+}} |y|^p \, K(x,dy), \ p\geq 1.
\end{equation}
These functions take values in $\mathbb{R}_+ \cup \{ + \infty \}$. We can now state our first drift condition.

\begin{prop}[(Continuous) Drift conditions for extinction/survival]
\label{th:lyap}
Let us suppose Assumption~\ref{ass:b,K} holds.
\begin{enumerate}
\item If there exists a positive $V\in \widehat{D}(L)$ such that $LV\leq 0$ outside a compact set and $$\lim\limits_{x\to\infty} V(x)=+\infty,$$ then $\tau_0<+\infty$ almost surely, which means that $\mathbb{T}$ is not supercritical.
\item If Assumption~\ref{hyp:ageBarrier} holds and there exists a positive $V\in \widehat{D}(L)$ and a compact $K\supset\{ 0\}$ such that 
\[
\left\{
\begin{array}{l}
LV(x)\leq 0, \quad \forall x\not\in K,\\
V(x)<\inf_{y\in K} V(y),\quad \forall x\not\in K,
\end{array}
\right.
\]
 then $\mathbb{T}$ is supercritical.
\end{enumerate}
\end{prop}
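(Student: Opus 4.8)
The plan is to read both statements as standard recurrence/transience criteria for the PDMP $(X_t)_{t\ge 0}$ of Theorem~\ref{th:genext}, translated through the dichotomy of Theorem~\ref{th:dichotomy}: supercriticality is exactly $\mathbb{P}_x(\lim_{t\to\infty}X_t=+\infty)>0$, whereas non-supercriticality means $\tau_0<+\infty$ almost surely. The common engine is the It\^o--Dynkin formula for the extended generator (Theorem~\ref{th:genext} and \cite{D93}): for $V\in\widehat D(L)$ the process $V(X_t)-\int_0^t LV(X_s)\,ds$ is a local martingale, so if $LV\le 0$ on the complement of a compact set and $\tau$ denotes the hitting time of that set (or of its complement), then $V(X_{t\wedge\tau})$ is a nonnegative local supermartingale, hence a genuine supermartingale by Fatou.

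For part (1) I would first rule out escape to infinity. Writing the compact as $[0,R]$ and starting from $y>R$, the supermartingale $M_t=V(X_{t\wedge\tau_{[0,R]}})$ converges almost surely to a finite limit; since $V$ is norm-like ($V\to\infty$), on $\{\tau_{[0,R]}=\infty\}$ the trajectory stays bounded in some $[R,M]$. But the deterministic downward unit drift together with the local boundedness of $b$ (Assumption~\ref{ass:b,K}) makes such confinement impossible: on each time window of length $M-R$ there is probability at least $\exp(-(M-R)\|b\|_{L^\infty([0,M])})>0$ of drifting down into $[0,R]$ with no jump, and iterating this estimate over successive windows gives $\mathbb{P}_y(\tau_{[0,R]}=\infty)=0$. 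Thus the process reaches $[0,R]$ almost surely from any start. A second, identical excursion estimate shows that each visit to $[0,R]$ is followed, with probability at least $\exp(-R\|b\|_{L^\infty([0,R])})>0$, by a pure-drift descent to $0$ before the next jump; applying the strong Markov property at the successive return times and the conditional Borel--Cantelli lemma then yields $\tau_0<+\infty$ almost surely. (When Assumption~\ref{hyp:ageBarrier} is in force, this last step may instead be replaced by a direct appeal to Theorem~\ref{th:dichotomy}, which already excludes $X_t\to\infty$.)

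For part (2), set $c=\inf_{y\in K}V(y)$ and choose any $x_0\notin K$, so that $V(x_0)<c$ by hypothesis, and in particular $c>0$ since $V\ge 0$. Let $\tau_K$ be the hitting time of $K$. Then $V(X_{t\wedge\tau_K})$ is again a nonnegative supermartingale, and optional stopping at the bounded time $t\wedge\tau_K$ gives $V(x_0)\ge \mathbb{E}_{x_0}[V(X_{t\wedge\tau_K})]\ge c\,\mathbb{P}_{x_0}(\tau_K\le t)$, where the second inequality uses $V\ge 0$ and $V\ge c$ on $K$; letting $t\to\infty$ yields $\mathbb{P}_{x_0}(\tau_K<+\infty)\le V(x_0)/c<1$. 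Hence with positive probability the process never enters $K$; since $0\in K$ this forces $\tau_0=+\infty$ on that event, and by Theorem~\ref{th:dichotomy} (using Assumption~\ref{hyp:ageBarrier}) the trajectory then tends to $+\infty$. This gives $\mathbb{P}_{x_0}(\tau_0=+\infty)>0$ for one $x_0$, and the start-independence statement of Theorem~\ref{th:dichotomy} upgrades it to supercriticality for every initial age.

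The main obstacle, and the only genuinely non-routine point, is the passage in part (1) from ``no escape to $+\infty$'' to actual absorption at $0$: the supermartingale only confines the trajectory to a compact set, and one must exploit the specific PDMP structure (deterministic unit drift downward, upward jumps at the locally bounded rate $b$, absorption at $0$) to convert confinement into hitting $0$. The excursion/Borel--Cantelli estimate sketched above is the crux; everything else, namely the Dynkin supermartingale and the optional-stopping bound of part (2), is routine once the extended generator of Theorem~\ref{th:genext} is available.
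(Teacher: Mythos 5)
Your proof is correct, and your part (2) coincides with the paper's own argument: the same stopped nonnegative supermartingale $V(X_{t\wedge\tau_K})$, optional stopping plus Fatou, the bound $\mathbb{P}_{x_0}(\tau_K<\infty)\le V(x_0)/\inf_K V<1$, and Assumption~\ref{hyp:ageBarrier} (through the start-independence statement of Theorem~\ref{th:dichotomy}) to pass from one starting point to supercriticality; your version is in fact written more carefully than the paper's, which states the bound \eqref{eq:boundproba} with $V(0)$ and $\tau_0$ in place of $\inf_K V$ and $\tau_K$. The genuine difference is in part (1). The paper disposes of it in one line: $V$ satisfies condition (CD1) of \cite{MT93III}, so by the results of that paper the process does not drift to infinity, and it is then absorbed at $0$ by the dichotomy of Theorem~\ref{th:dichotomy}. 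You instead prove absorption from scratch: supermartingale convergence confines the trajectory on $\{\tau_{[0,R]}=\infty\}$ to a (random) compact $[R,M]$; the locally bounded jump rate gives, in each time window of length $M-R$, probability at least $e^{-(M-R)\|b\|_{L^\infty([0,M])}}$ of a jump-free descent into $[0,R]$, killing confinement; and a second jump-free-descent estimate from $[0,R]$ down to $0$, combined with the strong Markov property at successive returns and conditional Borel--Cantelli, yields $\tau_0<\infty$ almost surely. This is longer but buys two things: it is self-contained, avoiding the $T$-process/Harris machinery of \cite{MT93III}, and, more importantly, it never invokes Assumption~\ref{hyp:ageBarrier}, which matches the statement of part (1) exactly, whereas the paper's detour through Theorem~\ref{th:dichotomy} formally requires that assumption --- so your route actually repairs a small inconsistency in the paper's own proof. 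All the delicate steps you flag (the stopped process being a genuine supermartingale via Fatou, path boundedness deduced from $V\to\infty$, the iteration over windows for a random bound $M$ handled by a union over integer $M$, and the uniformly positive success probability of descents from $[0,R]$) are sound as sketched.
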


\begin{proof}
For the first point, $V$ satisfies $(CD1)$  of \cite{MT93III} and then by the result of \cite[Section 3]{MT93III}, the process does not drift to infinity, and then goes to $0$ according to Theorem \ref{th:dichotomy}. For the second point, it is classic that if $V$ is a map such that $L V\leq 0$ then
\begin{equation}
\label{eq:boundproba}
\mathbb{P}_x (\tau_0<+ \infty) \leq \frac{V(x)}{V(0)}, \quad \forall x\in\mathbb{R}_{+.}
\end{equation}
Indeed, as $(V(X_{t\wedge \tau_0})_{t\geq 0}$ is a super-martingale (from Theorem \ref{th:genext}), it is a consequence of the stopping time theorem and Fatou Lemma. Then for $x\not\in K$, $\mathbb{P}_x (\tau_0<+ \infty)<1$. Using Assumption~\ref{hyp:ageBarrier} ends the proof.
\end{proof}

\begin{cor}[Sufficient asymptotic conditions for extinction/survival]
\label{cor:lyap}
Suppose Assumption~\ref{ass:b,K} holds.
\begin{enumerate}
\item If $\limsup_{x\to \infty} b(x)m(x) <1$, then $\mathbb{T}$ is not supercritical.
\item If Assumption~\ref{hyp:ageBarrier} and $\liminf_{x\to \infty}  b(x)m(x) >1$ and $\sup_{x\geq 1} \frac{b(x)m_2(x)}{x}<+\infty$, then $\mathbb{T}$ is supercritical.
\end{enumerate}
\end{cor}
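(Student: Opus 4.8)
The plan is to apply the drift criteria of Proposition~\ref{th:lyap}, choosing in each case an explicit Lyapunov function for which $LV$ is easy to sign, and then reading off the sign directly from the hypotheses on $b(x)m(x)$ and $b(x)m_2(x)$.

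For the first assertion I would take the positive, unbounded test function $V(x)=1+x$. It is absolutely continuous, hence lies in $\widehat{D}(L)$ wherever $m$ is finite, and its image under $Lf(x)=-f'(x)+b(x)\int_{\mathbb{R}_{+}}(f(x+y)-f(x))\,K(x,dy)$ is
$$LV(x)=-1+b(x)\int_{\mathbb{R}_{+}}y\,K(x,dy)=b(x)m(x)-1.$$
The hypothesis $\limsup_{x\to\infty}b(x)m(x)<1$ produces $\varepsilon>0$ and $x_0$ with $LV(x)\le-\varepsilon<0$ for $x\ge x_0$, while $V(x)\to\infty$. Thus $V$ meets the requirements of Proposition~\ref{th:lyap}(1), and $\mathbb{T}$ is not supercritical.

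For the second assertion I would instead use a bounded, strictly decreasing test function, the natural candidate being $V(x)=e^{-\theta x}$ (or $V(x)=(1+x)^{-c}$), so that the deterministic downward drift is penalised and reaching $0$ is made costly. A direct computation gives
$$LV(x)=e^{-\theta x}\Big(\theta-b(x)\int_{\mathbb{R}_{+}}(1-e^{-\theta y})\,K(x,dy)\Big),$$
so the drift condition $LV\le 0$ is equivalent to $b(x)\int_{\mathbb{R}_{+}}(1-e^{-\theta y})\,K(x,dy)\ge\theta$. Bounding the integrand from below by $1-e^{-\theta y}\ge\theta y-\tfrac12\theta^{2}y^{2}$ reduces this, after dividing by $\theta$, to securing
$$b(x)m(x)-\tfrac{\theta}{2}\,b(x)m_2(x)\ \ge\ 1$$
outside a compact set. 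Here the positive gap $\delta:=\liminf_{x\to\infty}b(x)m(x)-1>0$ supplies the margin, and the assumption $\sup_{x\ge1}b(x)m_2(x)/x<\infty$ is precisely what keeps the correction term $\theta\,b(x)m_2(x)$ under control. Granting a decreasing bounded $V$ with $LV\le0$ off a compact $K\supset\{0\}$ and $V(x)<\inf_{y\in K}V(y)$ for $x\notin K$, Proposition~\ref{th:lyap}(2) together with Assumption~\ref{hyp:ageBarrier} (and Theorem~\ref{th:dichotomy}) then gives supercriticality.

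The delicate point, and the main obstacle, is the choice of the exponent in this second step. Because the process is space-inhomogeneous, a single constant rate $\theta$ need not make the displayed inequality hold for \emph{all} large $x$ when $m_2(x)$ grows, since the correction $\theta\,b(x)m_2(x)\sim \theta M x$ can eventually swamp the fixed margin $\delta$. One is then forced to let the rate decay with $x$, working with $V(x)=\exp(-\int_0^x\theta(s)\,ds)$ and tracking the jump term $\int_{\mathbb{R}_{+}}(1-e^{-\int_x^{x+y}\theta})\,K(x,dy)$ uniformly in $x$. Controlling this quantity — splitting small from large jumps and absorbing the large-jump deficit into a term of order $\theta^{2}m_2$ via $\sup_x b(x)m_2(x)/x<\infty$ — is exactly where the second-moment growth hypothesis is used. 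What makes the statement plausible despite the apparent competition with $m_2$ is the spectrally-positive structure: jumps occur only upward and the only downward motion is the deterministic drift, so it is positivity of the drift $b(x)m(x)-1$, rather than a comparison of drift against variance, that governs transience, with the moment condition playing a purely technical role (ensuring $V\in\widehat D(L)$, the supermartingale property, and non-explosion).
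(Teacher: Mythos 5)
Your point 1 is correct and is exactly the paper's own proof: with $V(x)=x$ (you take $1+x$) one gets $LV(x)=b(x)m(x)-1\le 0$ outside a compact set, and Proposition~\ref{th:lyap}(1) applies.

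Point 2 contains a genuine gap, and it sits precisely at your sentence ``Granting a decreasing bounded $V$ with $LV\le 0$ off a compact $K$\dots'': producing such a $V$ \emph{is} the assertion to be proved, and your sketch never produces one. You correctly observe that the constant-rate exponential fails, and you propose $V(x)=\exp(-\int_0^x \theta(s)\,ds)$ with a decaying rate. But run this with the natural choice $\theta(s)=\alpha/s$, i.e.\ $V(x)=x^{-\alpha}$ --- which is exactly the paper's Lyapunov function $V(x)=Mx^{-\alpha}\mathbf{1}_{\{x\ge M\}}+\mathbf{1}_{\{x<M\}}$. The smallest constant $c$ for which $(x+y)^{-\alpha}-x^{-\alpha}+\alpha x^{-\alpha-1}y\le c\,x^{-\alpha-2}y^{2}$ holds for all $y\ge 0$ is $c=\frac{\alpha(\alpha+1)}{2}$ (let $y\to 0$), whence
\begin{equation*}
LV(x)\le \alpha x^{-\alpha-1}\Bigl[\,1-b(x)m(x)+\frac{\alpha+1}{2}\,\frac{b(x)m_2(x)}{x}\Bigr],
\end{equation*}
so the drift condition $LV\le 0$ requires the \emph{quantitative} relation
$\liminf_{x\to\infty}\bigl(b(x)m(x)-1\bigr)\ge \frac{\alpha+1}{2}\,\sup_{x\ge 1}\frac{b(x)m_2(x)}{x}$.
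Since $\frac{\alpha+1}{2}>\frac{1}{2}$ for every $\alpha>0$, the hypotheses of the corollary (a gap $\delta>0$ and a bound $C<+\infty$, with no relation between them) never guarantee this. Your claim that $\sup_{x\ge1}b(x)m_2(x)/x<+\infty$ ``is precisely what keeps the correction term under control'' is therefore mistaken: boundedness is not enough; what the method needs is smallness of $C$ relative to $\delta$.

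Worse, no choice of rate $\theta(\cdot)$ --- indeed no Lyapunov function at all --- can close this gap, because the implication as stated fails. Take $b(x)=(1+\delta)/x$ and $K(x,\cdot)=\delta_{x}$ for $x\ge1$ (both modified boundedly near $0$): then $b(x)m(x)=1+\delta>1$, $b(x)m_2(x)/x=1+\delta<+\infty$, and Assumption~\ref{hyp:ageBarrier} holds; yet in the tree every individual born at a time $t\ge1$ has lifetime $t$ and mean offspring number $\int_t^{2t}(1+\delta)s^{-1}\,ds=(1+\delta)\ln 2$, which is $<1$ as soon as $\delta<1/\ln 2-1\approx 0.44$. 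All its descendants share this property and only finitely many individuals are born before time $1$, so the tree is almost surely finite: not supercritical. (The paper's own proof is caught by the same problem: it asserts the coefficient $\frac{1}{\alpha+1}$ where the correct one is $\frac{\alpha+1}{2}$, and that erroneous constant is what makes ``take $\alpha$ large'' appear to work; with the true constant --- or in view of the example above --- point 2 needs an additional Lamperti-type hypothesis such as $\limsup_{x\to\infty} b(x)m_2(x)/x<2\liminf_{x\to\infty}\bigl(b(x)m(x)-1\bigr)$, under which your polynomial $V$ with $\alpha$ small enough does work.)
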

\begin{proof}
For the first point use $V:x\mapsto x$ in Proposition~\ref{th:lyap}. For the second point, set $V:x\mapsto M x^{-\alpha}\mathbf{1}_{x\geq M} + \mathbf{1}_{x< M}$, for some $M,\alpha>0$ fixed hereafter. A rapid calculation using Jensen inequality shows that, for $x>M$,
\begin{align*}
L V(x) 
\leq \frac{\alpha}{x^{\alpha+1}} \left[ \left(1- b(x)m(x) \right) + \frac{1}{\alpha +1} b(x) \frac{1}{x} m_2(x) \right].
\end{align*}
Then for $M$ and $\alpha$ large enough, we have $L V \leq 0$.
\end{proof}

\begin{rem}[Assumption]
The assumption $\sup_{x\geq 1} \frac{b(x)m_2(x)}{x}<+\infty$ is a technical but no confining assumption. Moreover, it is not clear if it is only a technical one (that can be removed) or if it has a real sense. Indeed, let us recall that fluctuation of the environment has an effect of the probability of extinction in classical Galton-Watson chain (see \cite[Section 2.9.2 p.49]{HJV} for instance).

Anyway, it can be weakened in such a way: if $V$ is decreasing with increasing derivative function $V'$ then it is enough that $\sup_{x\in K^c} b(x) m_2(x) V''(x)<+\infty$ for some compact set $K$. 
\end{rem}

\begin{rem}[Direct coupling approach]
Corollary~\ref{cor:lyap} is intuitive and it seems possible to prove it directly on the tree. If $K(\cdot,dy)=K(dy)$ is constant, $b$ varies and $m \sup_{x} b(x)= m\mathbf{b} <1$ then it is easy to couple the IST with a time-homogeneous splitting tree (with parameters $K$, $\mathbf{b}$) and then deduce the extinction from the homogeneous case. One can also extend this argument by coupling after a certain moment to have the condition $m \limsup_{x\to+\infty} b(x) <1$ and also prove survival condition with the same arguments. However when $K$ varies this argument totally fails and it is not easy (at least for us) to see how one can prove this result directly on the tree by simple argument.
\end{rem}

Piecewise deterministic Markov process are almost discrete objects (in contrast with diffusion processes) because there is no randomness between jumps. It is then inviting to consider discrete criteria on the post-jump Markov chain. That has been done in \cite{CD08} for instance, and our case this reads:

\begin{prop}[(Discrete) Drift conditions for extinction/survival]
\label{th:lyap2}
Suppose Assumption~\ref{ass:b,K} holds. Let $P$ be the transition kernel defined, for every positive function $f$, by $Pf(0)=f(0)$ and  for all $x>0$ by 
$$
Pf(x)= \int_0^{x }\int_{\mathbb{R}_{+}}  f(y+z)\,  K(y,dz) b(y) e^{-\int_y^{x} b(s)ds}\, dy + f(0) e^{-\int_0^{x} b(s)\, ds}.
$$
\begin{enumerate}
\item If there exists a positive function $V$ such that $PV -V\leq 0$ outside a compact set and $$\lim\limits_{x\to\infty} V(x)=+\infty,$$ then $\mathbb{T}$ is not supercritical.
\item If Assumption~\ref{hyp:ageBarrier} holds and there exists a positive function $V$ and a compact set $K$ such that
\[
\left\{
\begin{array}{l}
PV(x)\leq V(x), \quad \forall x\not\in K,\\
V(x)<\inf_{y\in K} V(y),\quad \forall x\not\in K,
\end{array}
\right.
\]
then the tree is supercritical.
\end{enumerate}
\end{prop}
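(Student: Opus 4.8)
The plan is to transfer the drift conditions from the continuous-time PDMP to its embedded post-jump chain and then to mimic the two arguments of Proposition~\ref{th:lyap}. First I would observe that $P$ is exactly the one-step transition kernel of the Markov chain $(Y_n)_{n\geq 0}$ obtained by sampling $(X_t)_{t\geq 0}$ at its successive jump times, with $0$ turned into an absorbing state: starting from $x$, the process decreases at speed $1$ and experiences its first jump at some level $y<x$ with ``density'' $b(y)e^{-\int_y^x b(s)\,ds}$, landing at $y+z$ with $z\sim K(y,\cdot)$, while it is absorbed at $0$ with probability $e^{-\int_0^x b(s)\,ds}$; these are precisely the two terms defining $Pf(x)$. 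This is the point of view advocated in \cite{CD08}. The key reduction is then that, by Theorem~\ref{th:dichotomy}, the alternative between $\{\tau_0<+\infty\}$ and $\{\lim_t X_t=+\infty\}$ for $(X_t)$ corresponds to the alternative between absorption of $(Y_n)$ at $0$ and $Y_n\to+\infty$, since the running maximum of the trajectory is recovered from the post-jump values $Y_n$.

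For the first point, the hypotheses $PV-V\leq 0$ off a compact set and $V(x)\to\infty$ are the discrete Foster--Lyapunov (CD1-type) conditions for the chain $(Y_n)$. I would invoke the corresponding recurrence criterion for discrete chains (\cite{CD08}, or \cite[Chapters 8--9]{MT09}) to conclude that $(Y_n)$ cannot drift to infinity. By the reduction above and Theorem~\ref{th:dichotomy} this forces $\tau_0<+\infty$, i.e.\ $\mathbb{T}$ is not supercritical, exactly as in the proof of the first point of Proposition~\ref{th:lyap}.

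For the second point I would run the supermartingale argument at the level of $(Y_n)$. Writing $\tau_K$ for the first entrance time of $(Y_n)$ into $K$, the condition $PV\leq V$ outside $K$ makes $(V(Y_{n\wedge\tau_K}))_n$ a nonnegative supermartingale under $\mathbb{P}_x$ for $x\notin K$. Optional stopping together with Fatou's lemma gives
$$
V(x)\ \geq\ \mathbb{E}_x\bigl[V(Y_{\tau_K})\mathbf{1}_{\tau_K<\infty}\bigr]\ \geq\ \Bigl(\inf_{y\in K}V(y)\Bigr)\,\mathbb{P}_x(\tau_K<\infty),
$$
so that $\mathbb{P}_x(\tau_K<\infty)\leq V(x)/\inf_{y\in K}V(y)<1$ whenever $x\notin K$, by the second hypothesis. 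Taking $K$ to contain $0$ (which is harmless, one may always enlarge $K$), avoiding $K$ entails never reaching the absorbing state, hence $\mathbb{P}_x(\tau_0=+\infty)>0$ for $x\notin K$; Assumption~\ref{hyp:ageBarrier} and the second statement of Theorem~\ref{th:dichotomy} then upgrade this to all starting ages, giving supercriticality. This is the discrete counterpart of the bound \eqref{eq:boundproba}.

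The main obstacle I anticipate is the clean justification of the reduction step: one has to argue carefully that the continuous-time absorption/escape alternative is faithfully reflected by the embedded chain---in particular that $Y_n\to+\infty$ genuinely corresponds to $X_t\to+\infty$ and not to an explosion (accumulation of jumps in finite time), and that the compact $K$ used in the second point truly shields the absorbing state $0$ from the escaping trajectories. Once this correspondence is secured, both points reduce to the standard discrete drift lemmas combined with the already-established dichotomy.
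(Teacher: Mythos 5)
Your proof follows the same route as the paper's: identify $P$ as the transition kernel of the post-jump embedded chain of $(X_t)_{t\geq 0}$, then run the discrete-time analogues of the two arguments of Proposition~\ref{th:lyap} (a Foster--Lyapunov non-evanescence criterion for point 1, the supermartingale/optional-stopping bound for point 2), using Theorem~\ref{th:dichotomy} to translate the behaviour of the chain back into extinction or survival of the tree. The paper's own proof is exactly this, compressed into a citation of \cite[Theorem 8.4.3, Proposition 8.4.1]{MT09}. The reduction step you worry about is not a real obstacle: since $b$ is locally bounded, an accumulation of jumps at heights confined to a compact set is impossible, so explosion forces $X_t\to+\infty$ in finite time; and whenever $X_t\to+\infty$ (in finite or infinite time) the post-jump values $\widehat{X}_n=X_{T_n}\geq\inf_{t\geq T_n}X_t$ also tend to $+\infty$, which is what the contrapositive of point 1 needs.

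One claim in your point 2, however, is wrong, and it hides a defect of the statement rather than of your strategy: enlarging $K$ so that it contains $0$ is \emph{not} harmless. Adding $0$ to $K$ replaces $\inf_{y\in K}V(y)$ by $\min\bigl(V(0),\inf_{y\in K}V(y)\bigr)$, and if $V(0)$ is small this destroys the second hypothesis $V(x)<\inf_{y\in K}V(y)$; no reduction can repair this, because the proposition as literally stated (allowing $0\notin K$) is false. Indeed, take $b\equiv\varepsilon$ with $10\varepsilon<1$ and lifetime kernel $K(x,dy)=\delta_{10}(dy)$: the tree is subcritical, hence not supercritical, yet the compact set $[1,2]$ together with the function $V$ equal to a constant $c$ off $[1,2]$ (including at $0$) and to $M>c$ on $[1,2]$ satisfies both displayed conditions, since every jump from a height $y\geq 0$ lands at $y+10>2$, so that $PV(x)=c\bigl(1-e^{-\varepsilon x}\bigr)+ce^{-\varepsilon x}=c=V(x)$ off $[1,2]$, and Assumption~\ref{hyp:ageBarrier} also holds. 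The correct statement --- and the one your supermartingale argument actually proves --- requires $K\supset\{0\}$, exactly as in the continuous version, Proposition~\ref{th:lyap}, point 2. So you should take $0\in K$ as a hypothesis rather than claim it can be arranged; with that reading your argument is complete: $\mathbb{P}_x(\tau_K<\infty)\leq V(x)/\inf_{y\in K}V(y)<1$ for $x\notin K$, avoiding $K$ forbids absorption at $0$, and Assumption~\ref{hyp:ageBarrier} propagates positivity of the survival probability to every starting age.
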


\begin{proof}
Let us consider $(\widehat{X}_n)_{n\geq0}$ be the (post-jump) embedded (or skeleton) chain associated to $(X_t)_{t\geq0}$; namely it is the Markov chaine defined by $\widehat{X}_n=X_{T_n}$, where $T_n$ is the $n$th jump time of $(X_t)_{t\geq0}$. Its transition kernel is $P$ and similarly to Proposition~\ref{th:lyap}, the statement is a consequence of the classical result \cite[Theorem 8.4.3 p. 191]{MT09} and \cite[Proposition 8.4.1 p.189]{MT09} (in discrete times now).
\end{proof}

\begin{cor}[Sufficient integral condition for extinction]
\label{cor:lyapd}
If Assumption~\ref{ass:b,K} holds and
$$
\limsup_{x\to \infty} \int_0^{x}  \left(( m(s)b(s)-1)  e^{-\int_s^{x} b(u)du} \right) ds <0,
$$ 
then $\mathbb{T}$ is not supercritical.
\end{cor}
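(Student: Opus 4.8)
The plan is to apply the discrete drift criterion of Proposition~\ref{th:lyap2}(1) with the explicit Lyapunov function $V(x)=1+x$ (one could also take $V(x)=x$; the additive constant will turn out to be irrelevant). Since $V$ is positive and $\lim_{x\to\infty}V(x)=+\infty$, it suffices to show that the displayed hypothesis is precisely the statement that $PV-V\leq 0$ outside a compact set, where $P$ is the post-jump transition kernel of Proposition~\ref{th:lyap2}.

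First I would compute $PV$. Writing $\int_{\mathbb{R}_{+}}(y+z)\,K(y,dz)=y+m(y)$ because $K(y,\cdot)$ is a probability measure, and using $V(0)=1$, the definition of $P$ gives
$$
PV(x)=\int_0^x (y+m(y)+1)\, b(y)\, e^{-\int_y^x b(s)\,ds}\, dy + e^{-\int_0^x b(s)\,ds}.
$$
The key elementary observation is that $\partial_y\, e^{-\int_y^x b(s)\,ds}=b(y)\, e^{-\int_y^x b(s)\,ds}$, so that $\int_0^x b(y)\, e^{-\int_y^x b}\,dy=1-e^{-\int_0^x b}$, which together with the boundary term exactly cancels the contribution of the constant part of $V$, while an integration by parts for the linear part yields $\int_0^x y\, b(y)\, e^{-\int_y^x b}\,dy = x-\int_0^x e^{-\int_y^x b}\,dy$.

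Combining these computations collapses everything to
$$
PV(x)-V(x)=\int_0^x \bigl(m(s)\,b(s)-1\bigr)\, e^{-\int_s^x b(u)\,du}\, ds,
$$
which is precisely the quantity in the statement. Hence the hypothesis $\limsup_{x\to\infty}\int_0^x (m(s)b(s)-1)\,e^{-\int_s^x b(u)\,du}\,ds<0$ yields $\varepsilon>0$ and $x_0$ with $PV(x)-V(x)\leq -\varepsilon<0$ for all $x\geq x_0$; that is, $PV-V\leq 0$ outside the compact set $[0,x_0]$. Proposition~\ref{th:lyap2}(1) then gives that $\mathbb{T}$ is not supercritical.

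I do not expect a genuine obstacle here: the only content is the integration by parts that rewrites the drift $PV-V$ of the identity function as the stated integral, together with checking that the additive constant in $V$ cancels (which it does, via the identity $\int_0^x b(y)\, e^{-\int_y^x b}\,dy + e^{-\int_0^x b}=1$). One should, however, implicitly note that finiteness of the $\limsup$ forces $m(s)b(s)$ to be locally integrable against the weight $e^{-\int_s^x b}$, so that $PV$ is well-defined for large $x$; this is automatic from the hypothesis.
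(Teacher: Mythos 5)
Your proposal is correct and follows essentially the same route as the paper: the paper also applies Proposition~\ref{th:lyap2}(1) with the (essentially) identity Lyapunov function, computes $PV(x)=\int_0^x (s+m(s))\,b(s)\,e^{-\int_s^x b(u)\,du}\,ds$, and concludes by the same integration by parts yielding $PV(x)-V(x)=\int_0^x (m(s)b(s)-1)\,e^{-\int_s^x b(u)\,du}\,ds$. Your only deviation is taking $V(x)=1+x$ instead of $V(x)=x$ (a harmless tweak that guarantees strict positivity, with the constant canceling exactly as you show).
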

\begin{proof}
Again function $V:x \mapsto x$ satisfies
\begin{align*}
Pf(x)
= \int_0^{x}  (s + m(s)) b(s) e^{-\int_s^x b(u)\, du}\, ds.
\end{align*}
An integration by parts ends the proof.
\end{proof}

In case of finite tree, it is possible to give some bounds on the tail of the length of the tree $\mathcal{L}(\mathbb{T})$ using again drift conditions. To our knowledge such results were never established in the case of non-Markovian tree (even in time-homogeneous setting). Also, with the help of Theorem~\ref{prop:htrans}, this leads some bound for some. conditioned trees.

\begin{prop}[Tail estimate of the tree length]
\label{prop:tail}

Suppose Assumption~\ref{ass:b,K} holds and \\ $\limsup_{x\to \infty} b(x)m(x) <1.$
\begin{enumerate}
\item If there exists $a>0$, such that for any large enough $x$, $\int_{\mathbb{R}_{+}} e^{a y} K(x,dy)<+\infty$ then
\begin{equation}
\label{eq:expoTail}
\mathbb{P}_{x}(\mathcal{L}(\mathbb{T}) \geq t) \leq C (1+ e^{\theta x}) e^{-\lambda t},\quad \forall t\geq 0,
\end{equation}
and some constants $C,\lambda, \theta>0$.
\item If there exists $p \in \mathbb{N}, p\geq 2$, such that for any  $x$ large enough, $m_p(x) <+\infty$ and $$\lim_{x\to \infty} x^{k-p+1} m_{p-k}(x)=0,$$ then
\begin{equation}
\label{eq:geoTail}
\mathbb{P}_{x}(\mathcal{L}(\mathbb{T}) \geq t) \leq C x^p t^{-1/p+1/p^2},\quad \forall t\geq 0, 
\end{equation}
and some constant $C>0$.
\end{enumerate}
\end{prop}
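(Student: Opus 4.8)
The plan is to reduce everything to the tail of the hitting time $\tau_0$ of $0$ by the contour PDMP $(X_t)_{t\geq 0}$. Indeed, by the contour construction the exploration reaches the root $(\emptyset,0)$ --- equivalently the contour first hits $0$ --- exactly when the whole tree has been explored, so $\tau_0=\mathcal{L}(\mathbb{T})$ and, by Corollary~\ref{cor:mmloi}, $\mathbb{P}_x(\mathcal{L}(\mathbb{T})\geq t)=\mathbb{P}_x(\tau_0\geq t)$. The standing hypothesis $\limsup_{x\to\infty}b(x)m(x)<1$ ensures (Corollary~\ref{cor:lyap}) that $\mathbb{T}$ is non-supercritical, hence $\tau_0<+\infty$ almost surely. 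I would then estimate $\mathbb{P}_x(\tau_0\geq t)$ by building a Lyapunov function $V$, reading off a drift inequality for $V$ from the generator $L$ of Theorem~\ref{th:genext}, converting it into a moment bound on $\tau_0$ through an It\^o--Dynkin supermartingale argument, and finishing with Markov's inequality.

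For the exponential bound \eqref{eq:expoTail} I would take $V(x)=e^{\theta x}$ with $\theta>0$ small. A direct computation gives
\[
LV(x)=e^{\theta x}\Big[-\theta+b(x)\int_{\mathbb{R}_{+}}(e^{\theta y}-1)\,K(x,dy)\Big].
\]
As $\theta\downarrow 0$ one has $\int_{\mathbb{R}_{+}}(e^{\theta y}-1)K(x,dy)=\theta\,m(x)+o(\theta)$, and since $\limsup_x b(x)m(x)<1$ while the exponential moments $\int e^{ay}K(x,dy)$ are finite for large $x$, I can fix $\theta\in(0,a)$ small and $R$ large so that the bracket is $\leq-\lambda<0$ on $(R,+\infty)$; that is, $LV\leq-\lambda V$ off the compact $[0,R]$, on which $LV$ stays bounded by local boundedness of $b$. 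This geometric drift makes $e^{\lambda(t\wedge\tau_0)}V(X_{t\wedge\tau_0})$ a supermartingale up to the bounded contribution of $[0,R]$, so optional stopping and Fatou yield $\mathbb{E}_x[e^{\lambda\tau_0}]\leq C(1+e^{\theta x})$, and Markov's inequality gives \eqref{eq:expoTail}.

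For the polynomial bound \eqref{eq:geoTail} I would instead use $V(x)=x^p$. Since $(x+y)^p-x^p=\sum_{k=1}^{p}\binom{p}{k}x^{p-k}y^{k}$,
\[
LV(x)=p\,x^{p-1}\big(b(x)m(x)-1\big)+b(x)\sum_{k=2}^{p}\binom{p}{k}x^{p-k}m_k(x).
\]
The first term is $\leq-c\,x^{p-1}$ for large $x$ by subcriticality, and the hypotheses $x^{k-p+1}m_{p-k}(x)\to 0$ --- equivalently $m_k(x)=o(x^{k-1})$ --- make each $k\geq 2$ term negligible against $x^{p-1}$. Hence $LV(x)\leq-c\,x^{p-1}=-c\,V(x)^{1-1/p}$ outside a compact set: a subgeometric drift with rate $\phi(v)=v^{1-1/p}$. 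Feeding this into the subgeometric hitting-time estimates (in the spirit of \cite{MT09,CD08}) produces a polynomial moment bound $\mathbb{E}_x[\psi(\tau_0)]\leq C\,x^p$ for the associated rate $\psi$, whence \eqref{eq:geoTail} by Markov's inequality.

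The two generator computations and the accounting of lower-order moment terms are routine; the real difficulty is in part (ii). The drift $V^{1-1/p}$ degenerates near $0$, so the supermartingale argument --- which needs no irreducibility, consistently with Assumption~\ref{hyp:ageBarrier} not being assumed here --- only controls the hitting time $\tau_C$ of a compact $C\supset\{0\}$; upgrading to $\tau_0$ requires a separate short argument using that $(X_t)$ descends deterministically at unit slope between jumps, so from any level below $R$ it reaches $0$ in bounded time unless an upward excursion, rare under $\limsup b\,m<1$, intervenes. Extracting the exact exponent $1/p-1/p^2$ then demands matching the rate $\psi=\phi\circ H^{-1}$ with $H(v)=\int_1^{v}ds/\phi(s)$ and, if one works through the embedded post-jump chain of Proposition~\ref{th:lyap2}, converting moments of the number of jumps into moments of the real time $\tau_0$; this conversion is the natural source of the loss relative to the naive polynomial rate.
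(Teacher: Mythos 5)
Your proposal matches the paper's proof in its essential content: the same reduction of $\mathbb{P}_x(\mathcal{L}(\mathbb{T})\geq t)$ to the tail of the absorption time $\tau_0$ of the contour PDMP (the paper states this identity with an immaterial factor $2$), the same two Lyapunov functions $V_\theta(x)=e^{\theta x}$ and $V_p(x)=x^p$, and the same drift computations (your sum $\sum_{k=2}^{p}\binom{p}{k}x^{p-k}m_k(x)$ is the paper's sum after the index change $k\mapsto p-k$). The only divergence is the final step converting the drift inequalities into tail bounds. The paper observes that, $0$ being absorbing, $\mathbb{P}_x(\tau_0>t)$ equals the total variation distance between the law of $X_t$ and $\delta_0$, and then cites off-the-shelf ergodicity-rate theorems: \cite[Theorem 6.1]{MT93III} under the geometric drift condition (CD3) for \eqref{eq:expoTail}, and \cite[Theorems 3.10 and 3.11]{DFG} under the subgeometric drift $LV_p\leq -cV_p^{1-1/p}$ for \eqref{eq:geoTail}. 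You instead unpack by hand what those theorems package: supermartingale/optional-stopping bounds on moments of $\tau_0$, followed by Markov's inequality. Both routes are legitimate; the citation route is shorter and silently absorbs the one genuinely nontrivial step your sketch defers, namely the upgrade from the hitting time of the compact set on which the drift fails to the hitting time of $\{0\}$. Note that this issue concerns part (i) just as much as part (ii) --- $LV_\theta$ has no sign near $0$ either --- and its resolution is the regeneration structure (each visit to $[0,R]$ yields probability at least $e^{-\int_0^R b(u)\,du}>0$ of a jump-free descent to $0$ within time $R$, while excursions above $R$ have the required moments by the drift); this is precisely what the petite-set machinery behind \cite{MT93III,DFG} formalizes.

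Two smaller observations. First, the exponent $1/p-1/p^2$ that puzzles you is not produced by an embedded-chain time change; in the paper it simply comes out of the particular interpolated statement cited in \cite{DFG}. Your direct route, with $\phi(v)=cv^{1-1/p}$, $H(v)=\int_1^v ds/\phi(s)\asymp v^{1/p}$ and $\psi=\phi\circ H^{-1}\asymp t^{p-1}$, would give the stronger bound $C x^p t^{-(p-1)}$, which implies \eqref{eq:geoTail} for $t\geq 1$; so no loss needs to be engineered, and your final paragraph's speculation can be dropped. Second, both your argument and the paper's implicitly use moment bounds on $K(x,\cdot)$ for $x$ in the compact set (and a uniform-in-$x$ exponential moment bound when choosing $\theta$ in part (i)), which the hypotheses read literally (``for any large enough $x$'') do not grant; this looseness is shared with the paper and is not a defect specific to your write-up.
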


\begin{proof}
First, from Theorem \ref{thm:genContour}, we have
$$
\mathbb{P}(\mathcal{L}(\mathbb{T}) \geq 2t) = \mathbb{P}(\tau_0>t)=\| \mathcal{L}(X_t) - \delta_0 \|_{\textrm{TV}},
$$
where  $\| \cdot \|_{\textrm{TV}}$ is the classical total variation distance. 
Hence, it is enough to work with classical results on the convergence of Markov processes to equilibrium. In particular \eqref{eq:expoTail} is a consequence of \cite[Theorem 6.1]{MT93III} and \eqref{eq:geoTail} is a consequence of \cite[Theorem 3.10, (3.6)]{DFG}. For the first point, let us check \cite[(CD3)]{MT93III} with $V_\theta(x) :\mf{t} \mapsto e^{\theta x}$. In the first hand, we have  
$$
L V_\theta(x) = V(x) \left(-\theta  +  b(x) \int_{\mathbb{R}_{+}} (e^{\theta y} -1) K(x,dy)\right) = \lambda_\theta V(x),\quad \forall x\geq 0.
$$
On the second hand $\lambda_0=0$, and $\partial_\theta \lambda_\theta(x) =_{\scriptscriptstyle{\vert \theta=0}} b(x)m(x) -1 $ which is negative for large $x$. Then there exists a small $\theta$ such that  \cite[(CD3)]{MT93III} and \cite[Theorem 6.1]{MT93III} hold. For the second point now, the map $V_p:x\mapsto x^p$ satisfies
$$
\mathcal{L} V_p (x)
= (b(x)m(x)-1) p V_p^{1-1/p}(x) + \sum_{k=0}^{p-2}  \dbinom{p}{k} x^{k-p+1} m_{p-k}(x)
$$
and then the drift condition of \cite[Theorem 3.11, (3.10)]{DFG} (see also \cite[Theorem 4.1 point 3]{H16} or \cite{DFMS}) holds. This ends the proof.
\end{proof}

\vspace{1cm}

\subsection{Applications to particular cases}
\label{ssec:example}

\subsubsection{Periodic environement}
We will assume that $b=\beta$ is constant and $m$ is periodic (with period $\mf{T}$). Set
$$
\psi:\mf{s} \mapsto \beta m(\mf{s}) -1, \qquad \Psi:\mf{t} \mapsto e^{-\mf{t}} \int_0^\mf{t} \psi(\mf{s}) e^{\beta \mf{s}} d\mf{s}.
$$
From Corollary~\ref{cor:lyapd}, the process goes to extinction if $\limsup_{\mf{t}\to\infty} \Psi(t)<0$. Using the periodicity assumption, we have, for every $n\geq 0$,
\begin{align*}
\Psi(n\mf{T}) 
&= e^{-\beta \mf{t}-\beta (n-1)\mf{T}} \Psi(\mf{T}) +\Psi(\mf{t})= e^{-\beta \mf{t}} \Psi(\mf{T}) \sum_{k=0}^{n-1}e^{-\beta k \mf{T}} + \Psi(\mf{t}) \\
&=\Psi(\mf{T}) \frac{e^{-\beta \mf{t}} (1-e^{-\beta n \mf{T}}) }{(1-e^{-\beta k \mf{T}})} + \Psi(\mf{t}).
\end{align*}
In particular, if we set
$$
\Phi:\mf{t} \mapsto \Psi(\mf{T}) \frac{e^{-\beta \mf{t}} }{(1-e^{-\beta k \mf{T}})} + \Psi(\mf{t})=e^{-\beta \mf{t}}\left( \frac{e^{-\beta \mf{T}} }{(1-e^{-\beta k \mf{T}})} \int_0^\mf{T} \psi(\mf{s}) e^{\beta \mf{s}} d\mf{s}+ \int_0^\mf{t} \psi(\mf{s}) e^{\beta \mf{s}} d\mf{s} \right),
$$
then $\Phi$ is periodic and $\lim_{\mf{t}\to \infty} |\Phi(\mf{t}) -\Psi(\mf{t})|=0$. To verify Corollary~\ref{cor:lyapd} (1), it is then enough to have $\sup_{\mf{t}\in [0, \mf{T}]} \Phi(\mf{t})<0.$ As an instance, when $\psi:\mf{t}\mapsto \cos(\mf{t})+c$ ($c$ fixed hereafter), we have
$$
\forall \mf{t} \in [0,2\pi], \ \Phi(\mf{t})=\frac{1}{1+\beta^2}\left[-\beta e^{-\beta \mf{t}}(1+e^{-2\pi\beta})+\beta \cos(\mf{t}) +\sin(\mf{t}) \right]
$$
For $\beta=1$, $\sup_{\mf{t}\in [0, \mf{T}]} \Phi(\mf{t}) \approx  0.5072555 +c$. Then the process goes to extinction for $c< 0.5073$. Note that for such $c$, we can have $\limsup_{\mf{t}\to \infty} \psi(\mf{t})=1+c>0$ (this is then a better result than those of Corollary~\ref{cor:lyap} (1))

\subsubsection{Time-homogeneous splitting tree with heavy-tail life distribution}

Let us assume that $b$ is constant and $K(dy)$ is given by a Pareto distribution; that is
$$
K(dy)=\frac{k}{x^{k+1}},
$$
for some $k\in \mathbb{N}$. Assume $mb=kb/(k-1)<1$. We have $m_p<+\infty$ if and only if $k \geq 1+p$. Under this condition, assumption of Proposition~\ref{prop:tail} 2. holds, and we have the bound \eqref{eq:geoTail} on the tail. On the other side, we have (with the notation of Section~\ref{sec:model})
\begin{align*}
\mathbb{P}_x(\mathcal{L}(\mathbb{T}) \geq t)
&\geq \mathbb{P}_x(P_{\mathcal{U}}(\mathbb{T}) \neq \{\emptyset\}, B_1 \geq t)
= e^{-b x} t^{-k}.
\end{align*}
Even if we do not find the same bound as in \eqref{eq:geoTail} , this show that the tail of the tree length is certainly heavy and the bound \eqref{eq:geoTail} is not too rough.

\section{Scaling limits of non-homogeneous splitting trees}
\label{sec:scaling}
Let us finally end the paper with the present section by some scaling limits of ISTs. The first subsection is concerned by the overall shape of asymptotically critical IST. In the second part, we consider the asymptotic behavior of ISTs as the birth-rate goes to $\infty$ whereas the birth-kernel $K(x,dy)$ tends to have zero means. In such situation, we consider two sequences $(b_{n})_{n\geq 1}$ and $(K_{n})_{n\geq 1}$ converging point-wise receptively to $\infty$ and $\delta_{0}$. 
 
\subsection{Scaling limit of asymptotically critical IST}
In this section we look up to the global shape of asymptotically critical IST. More precisely, we suppose that $b$ and $K$ satisfy

\[
\lim\limits_{x\to\infty}b(x)\int_{\mathbb{R}_{+}} y\ K(x,dy)=1.
\] 
To this end, we consider the time contraction $\mathbb{T}_{n}$ of $\mathbb{T}$ on the scale $(a_{n},c_{n})_{n\geq 1}$ by
\[
 (\delta,t)\in\mathbb{T}_{n} \Leftrightarrow (\delta,c_{n}t)\in\mathbb{T}
\]
where $(a_{n},c_{n})\to\infty$  as $n$ goes to infinity. We also look at the law of $\mathbb{T}_n$ under $\mathbb{P}_{c_n x}$ for fixed $x>0$.
Our aim is to show that the sequence a tree converges to some tree in the sense of the Gromov-Hausdorff-Prokhorov topology using the contour processes.
Since the triplet $(\mathbb{T},d,\lambda)$ is a TOM tree, we can define a sequence of TOM trees by setting
\begin{equation}
\label{eq:scaledl}
\left\{
\begin{array}{l}
d_{n}((\sigma, s),(\delta, t))=d((\sigma, s),(\sigma, t)),\quad \forall(\sigma,s),(\delta, t)\in\mathbb{T}_{n}\\
\lambda_{n}(A)=\frac{a_{n}}{c_{n}}\lambda(A),
\end{array}
\right.
\end{equation}
From this construction, we have that the contour process $(\mathcal{C}(\mathbb{T}_{n}),\ s\geq 0)$ of $\mathbb{T}_{n}$ has generator given by
\[
L_{n}f(x)=-\frac{a_{n}}{c_{n}}f'(x)+a_{n}b(c_{n}x)\int_{\mathbb{R}_{+}}f\left(x+\frac{y}{c_{n}}\right)-f(x)\ K(c_{n}x,dy).
\]
In particular, if we assume that $f$ has third derivative we have, using Taylor expansion,
\begin{multline}
\label{eq:devLim}
L_{n}f(x)=\frac{a_{n}}{c_{n}}\left(b(c_{n}x)m(c_{n}x) -1\right)f'(x)+\frac{1}{2}f''(x)\frac{a_{n}b(c_{n}x)m_{2}(c_{n}x)}{c_{n}^{2}}\\+ a_{n}b(c_{n}x)\int_{\mathbb{R}_{+}}\int_{x}^{x+y/c_{n}}\frac{f^{(3)}(s)}{2}\left(s-x-\frac{y}{c_{n}}\right)^{2}\ ds\ K(c_{n}x,dy).
\end{multline}
Hence, to obtain a non-degenerate limit in $n$, one should require that
\[
\left\{
\begin{array}{lll}
\lim\limits_{n\to\infty}\frac{a_{n}}{c_{n}}\left(b(c_{n}x)m(c_{n}x) -1\right)&= &g_{d}(x),\\
\lim\limits_{n\to\infty}\frac{a_{n}}{c_{n}^{2}}b(c_{n}x)m_{2}(c_{n}x)&=&g_{v}(x),
\end{array}
\right.
\]
for some functions $g_{d}$ and $g_{v}$. In particular, this implies that the functions $x\mapsto\left(b(x)m(x) -1\right)$ and $x\mapsto b(x)m_{2}(x)$ are regularly varying. As a consequence, there exists two slowly varying functions $S_{d}$ and $S_{v}$, and two real numbers $\beta$ and $\gamma$, such that
\[
\left\{
\begin{array}{l}
\left(b(x)m(x) -1\right)=x^{\beta}S_{d}(x),\\
b(x)m_{2}(x)=x^{\gamma}S_{v}(x).
\end{array}
\right.
\]
In addition, the only case where the limit can hold is $\beta=\gamma-1$. Now, in view of \eqref{eq:devLim}, one should expect that the limiting generator has form given by
\[
x^{\beta}f'(x)+x^{\beta+1}f''(x).
\]

 This leads to the following assumptions on $b$ and $K$.

\begin{assu}[Asymptotically critical IST]
	\label{hyp:regul}
	~	
\begin{enumerate}
\item$\left(b(x)m(x) -1\right)=\frac{c}{x}S_{d}(x)$ for some constant $c \in \mathbb{R}$ and such that $S_{d}$ is bounded and satisfies $\lim\limits_{x\to\infty}S_{d}(x)=1$.
\item$b(x)m_{2}(x)=S_{v}(x)$ such that $S_{v}$ is bounded and satisfies $\lim\limits_{x\to\infty}S_{v}(x)=1$.
\item $x\mapsto b(x)m_{3}(x)$ is bounded.
\end{enumerate}
\end{assu}

\begin{rem}
	\begin{itemize}
\item There are many examples of parameters which satisfy these hypotheses. Among the simpler ones, one could, for instance, think of
\[
\left\{
\begin{array}{l}
b(x)=1+\frac{c}{1+x}, \text{for some constant }c,\\
K(x,dy)=\delta_{1}(dy).
\end{array}
\right.
\]
See also Figure~\ref{fig:Bessel-intro}.
\item One may think that assuming $\beta=-1$ is a bit restrictive but the above proof holds for any choice of $\beta$. Moreover, it does not change limit up to a change of measure.

\end{itemize}
\end{rem}

In addition, in order to lighten notations, we also assume that $c^2_{n}=a_{n}=n$. The proof relies on the two following lemma. The idea is to show the convergence of the sequence $(L_{n})_{n\geq 1}$ of generators using a wise choice of core for the expected limit of this sequence. The good choice appears in the following lemma where we show that this set is dense.

In the Lemma that follows and others results of the present section, the topology is the uniform topology.

\begin{lem}
\label{lem:goodSpace}
The subspace
\[
\mathcal{D}=\left\{f\in C^{\infty}_{c}(\mathbb{R}_{+})\mid f^{(n)}(0)=0, \quad \forall n\geq 1 \right\},
\]
is dense in $C_{0}(\mathbb{R}_{+})$.
\end{lem}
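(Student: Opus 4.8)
The plan is to show that an arbitrary $g\in C_0(\mathbb{R}_+)$ can be approximated uniformly by elements of $\mathcal{D}$. Recall $\mathbb{R}_+=[0,\infty)$, so $C_0(\mathbb{R}_+)$ consists of continuous functions with $\lim_{x\to\infty}g(x)=0$, with no constraint at $0$; likewise the membership condition $f^{(n)}(0)=0$ for $n\geq 1$ leaves the value $f(0)$ free. The whole point is therefore that the flatness condition at $0$ is harmless: near the origin every $C_0$ function is close to the constant $g(0)$, and constants are flat. So the strategy is to flatten a truncation of $g$ near $0$ into a constant, then mollify, then cut off near infinity, losing at most $\epsilon/4$ at each of three reduction steps.

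First I would use that $g$ is uniformly continuous on $[0,\infty)$ (being continuous with limit $0$ at infinity) to choose $\delta>0$ with $|g(x)-g(0)|<\epsilon/4$ on $[0,\delta]$. Picking a smooth cutoff $\phi$ equal to $1$ on $[0,\delta/2]$ and $0$ on $[\delta,\infty)$, I set $\tilde g=\phi\,g(0)+(1-\phi)g$. Then $\tilde g$ agrees with $g$ outside $[0,\delta]$, equals the constant $g(0)$ on $[0,\delta/2]$, and on $[0,\delta]$ it is a convex combination of $g(0)$ and $g(x)$, both within $\epsilon/4$ of $g(x)$, so $\|\tilde g-g\|_\infty<\epsilon/4$.

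Second, I would extend $\tilde g$ to $\mathbb{R}$ by the constant $g(0)$ on $(-\infty,0)$ (which preserves uniform continuity) and mollify: $h=\rho_\eta*\tilde g$ with $\rho_\eta$ a standard mollifier supported in $[-\eta,\eta]$ and $\eta<\delta/4$. Since $\tilde g\equiv g(0)$ on $(-\infty,\delta/2]$, one checks $h\equiv g(0)$ on $[0,\delta/2-\eta]\supseteq[0,\delta/4]$, so $h$ is smooth with all derivatives vanishing in a neighbourhood of $0$; by uniform continuity $\|h-\tilde g\|_\infty<\epsilon/4$ for $\eta$ small. Third, since $h(x)\to 0$ as $x\to\infty$, I multiply by a smooth cutoff $\chi_R$ equal to $1$ on $[0,R]$ and $0$ past $R+1$, with $R>\delta/4$ large enough that $|h|<\epsilon/4$ on $[R,\infty)$. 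Then $f:=h\,\chi_R$ is smooth, compactly supported, and still equal to the constant $g(0)$ near $0$ (hence flat there), so $f\in\mathcal{D}$ with $\|f-h\|_\infty<\epsilon/4$. The triangle inequality gives $\|f-g\|_\infty<\epsilon$.

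I do not anticipate a genuine obstacle, since the construction is elementary. The only points requiring care are keeping the flattened patch and the mollification radius compatible (choosing $\eta<\delta/4$ so that mollification does not disturb the constant patch pinning the flatness at $0$) and checking that the left-constant extension preserves uniform continuity, which is what controls the mollification error uniformly on all of $[0,\infty)$.
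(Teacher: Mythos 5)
Your proof is correct: the three-step construction (flatten to the constant $g(0)$ near the origin, mollify after a constant extension to the left, cut off at infinity) is complete, the error bookkeeping works, and the resulting $f=h\chi_R$ is smooth, compactly supported and literally constant near $0$, hence lies in $\mathcal{D}$. You also correctly identified the one conceptual point, namely that $C_0(\mathbb{R}_+)$ imposes nothing at $0$ and $\mathcal{D}$ only pins derivatives there, so constants can absorb the value $g(0)$.

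Your route is, however, genuinely different from the paper's. The paper fixes once and for all a function $\psi\in C_c^\infty(\mathbb{R}_+)$ with $\psi(0)=1$ and all derivatives vanishing at $0$ (an explicit flat bump), writes $f=f(0)\psi+g$ with $g\in C_0\bigl((0,+\infty)\bigr)$, and then invokes the standard density of $C_c^\infty\bigl((0,+\infty)\bigr)$ in $C_0\bigl((0,+\infty)\bigr)$ to approximate $g$; the approximants $\varphi_n+f(0)\psi$ lie in $\mathcal{D}$ because the $\varphi_n$ vanish identically near $0$. That argument is shorter and modular: all the analysis is delegated to a cited classical fact, and the only new object is $\psi$. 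Your argument is self-contained: you do not need the density of $C_c^\infty\bigl((0,+\infty)\bigr)$ as an input, since the mollification and cutoff steps reprove exactly that, adapted to the boundary; the price is the bookkeeping with $\delta$, $\eta$ and $R$, and the payoff is that the mechanism is fully explicit (the approximant is constant on a neighbourhood of $0$, rather than flat via a bump carrying the value $f(0)$). Both proofs rest on the same underlying observation about where the constraint in $\mathcal{D}$ actually bites.
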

\begin{proof}
Let us fix $f\in C_{0}(\mathbb{R}_{+})$. Let $\psi \in C_c^\infty(\mathbb{R}_{+})$ such that 
$$
\psi(0)=1, \quad \forall n\geq 0, \ \psi^{(n)}(0)=0,\footnote{It can be construct as $\psi=B \circ A$, with $A:x\mapsto (1-x e^{-1/x^2}) \mathbf{1}_{\{x>0\}}$ and $B:x\mapsto \exp\left(1-1/(1-x^2)\right)\mathbf{1}_{\{x\leq 1\}}$.}
$$ and set $g:x\mapsto f(x)-f(0)\psi(x)$ restricted on $(0,+\infty)$. Function $g$ belongs to 
$$
C_{0}((0,+\infty))= \{h\in C((0,+\infty)) \ | \ \lim_{x\to 0} h(x) = \lim_{x\to + \infty} h(x) =0\}.
$$
As $C^{\infty}_{c}((0,+\infty))$ is dense in $C_{0}((0,+\infty))$ then there exists a sequence of functions $(\varphi_n)_{n\geq 0}$ of $C^{\infty}_{c}((0,+\infty))$ converging to $g$ and then (the continuous extension) $(\varphi_n + f(0)\psi)_{n\geq 0}$ converges to $f$. It remains to show that this sequence belongs to $\mathcal{D}$ but as functions $\varphi_n$ are null in a neighborhood of $0$, this is direct.

\end{proof}
The next lemma shows that $\mathcal{D}$ is indeed a core for the expected generator.
\begin{lem}
Let $(A^{c},D(A^{c}))$ be the generator of a Bessel process with dimension $2c+1$ absorbed at $0$. In particular
\[
A^{c}f(x)=\frac{c}{x}f'(x)+\frac{1}{2}f''(x),
\]
for all twice differentiable functions $f$ in $C_{0}(\mathbb{R}_{+})$. Then, the space $\mathcal{D}$ defined in Lemma \ref{lem:goodSpace} is a core for $(A^{c},D(A^{c}))$.
\end{lem}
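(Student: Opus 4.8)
The plan is to use the standard core criterion for Feller generators. Since $(A^{c},D(A^{c}))$ generates a strongly continuous (Feller) semigroup on $C_{0}(\mathbb{R}_{+})$ and, by Lemma~\ref{lem:goodSpace}, $\mathcal{D}$ is dense in $C_{0}(\mathbb{R}_{+})$, it is enough to check that $\mathcal{D}\subseteq D(A^{c})$ and that the range $(\lambda-A^{c})\mathcal{D}$ is dense in $C_{0}(\mathbb{R}_{+})$ for some $\lambda>0$ (this range condition, together with density, is equivalent to being a core; see e.g. \cite[Lemma 17.12]{Kal} or the Hille--Yosida range theorem). The inclusion $\mathcal{D}\subseteq D(A^{c})$ is the easy part: for $f\in\mathcal{D}$ all derivatives of $f$ vanish at $0$, so $f'(x)/x$ is still smooth and flat at $0$, whence $A^{c}f=\tfrac{c}{x}f'+\tfrac12 f''$ is continuous, compactly supported, and satisfies the absorbing boundary condition $A^{c}f(0)=0$; in fact $A^{c}$ maps $\mathcal{D}$ into itself, so $(\lambda-A^{c})\mathcal{D}\subseteq\mathcal{D}$.

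The heart of the matter is the density of $(\lambda-A^{c})\mathcal{D}$, which I would establish by duality. Let $\mu$ be a finite signed measure on $[0,+\infty)$ annihilating $(\lambda-A^{c})\mathcal{D}$; the goal is $\mu=0$. Testing first against $f\in C_{c}^{\infty}((0,+\infty))\subseteq\mathcal{D}$ shows that, on $(0,+\infty)$, $\mu$ is a distributional solution of the adjoint equation $\tfrac12 g''-(\tfrac{c}{x}g)'=\lambda g$. As this operator is uniformly elliptic away from $0$, interior regularity gives that $\mu$ has a smooth density $g$ there solving a second order linear ODE; thus $g$ lies in a two-dimensional space and $\mu=g\,dx+\alpha\delta_{0}$ for some $\alpha\in\mathbb{R}$. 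Finiteness of $\mu$ at $+\infty$ selects the exponentially decaying fundamental solution (the other one grows like $e^{\sqrt{2\lambda}\,x}$), while near $0$ the indicial equation $r^{2}-(1+2c)r+2c=0$ yields the exponents $r\in\{1,2c\}$, so that $g(x)\sim Px+Qx^{2c}$, with integrability of $\mu$ near $0$ constraining the singular exponent. Finally, testing against $f\in\mathcal{D}$ with $f(0)\neq0$ and integrating by parts produces a boundary identity at $0$ linking $\alpha$ to the coefficients $P,Q$; combined with the decay and integrability constraints this should force $g\equiv0$ and $\alpha=0$, i.e. $\mu=0$, and hence the range is dense.

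I expect the genuine difficulty to lie entirely at the singular absorbing boundary $x=0$. The drift $c/x$ is singular there, $0$ is a regular accessible boundary for the relevant range of the dimension $2c+1$, and one must carefully match the admissible boundary behaviour of the adjoint solutions, control the possible atom $\alpha\delta_{0}$, and track how the value of $c$ enters through the indicial exponents $\{1,2c\}$. All the analysis on compact subsets of $(0,+\infty)$ and at $+\infty$ is routine (uniform ellipticity, classical ODE asymptotics), so the whole proof hinges on this delicate bookkeeping at $0$. Should the duality computation become unwieldy, an alternative is to invoke the explicit core results for one-dimensional diffusions expressed through scale and speed measures \cite[Chapter 8]{EK86}, specialized to the absorbed Bessel process, for which the flat functions in $\mathcal{D}$ furnish the required boundary behaviour.
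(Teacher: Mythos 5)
Your route is genuinely different from the paper's: the paper obtains $\mathcal{D}\subset D(A^{c})$ from the known description of Bessel generators \cite{Kent,GJY03} and then invokes the core criterion of \cite[Proposition~17.9]{Kal}, reducing the lemma to the invariance check $A^{c}\mathcal{D}\subset\mathcal{D}$, whereas you go through the Hille--Yosida range criterion. That criterion is a legitimate strategy, and your preliminary steps (density, $\mathcal{D}\subset D(A^{c})$, $A^{c}\mathcal{D}\subset\mathcal{D}$, the adjoint ODE, the exponents $\{1,2c\}$, selection of the decaying solution at $+\infty$) are fine. But the one step you do not carry out --- ``this should force $g\equiv 0$ and $\alpha=0$'' --- is the entire content of the lemma under this approach, and it does not go the way you expect. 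Write the candidate annihilating measure as $\mu=\beta\,g_{\lambda}\,dx+\alpha\,\delta_{0}$, where $g_{\lambda}\approx Px+Qx^{2c}$ near $0$ is the solution decaying at $+\infty$. When you integrate by parts against a flat $f\in\mathcal{D}$, the $Q$-components of the boundary terms at $0$ cancel among themselves (the contribution of $\tfrac{c}{x}g_{\lambda}$ exactly offsets that of $\tfrac12 g_{\lambda}'$ on the $x^{2c}$ part), and one is left with the identity $\int_{0}^{\infty}(\lambda f-A^{c}f)\,g_{\lambda}\,dx=(c-\tfrac12)P\,f(0)$. Annihilation of $(\lambda-A^{c})\mathcal{D}$ therefore amounts to the \emph{single} linear relation $\beta\,(c-\tfrac12)P+\alpha\lambda=0$ between the two free parameters $\beta,\alpha$: one equation in two unknowns, which has nonzero solutions whenever $g_{\lambda}$ is integrable near $0$, i.e.\ whenever $2c>-1$. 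So in that regime the duality argument produces a nonzero annihilating measure instead of ruling one out.

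The case $c=0$ (dimension $1$, Brownian motion on $[0,+\infty)$ absorbed at $0$) makes the failure explicit. There $g_{\lambda}(x)=e^{-\sqrt{2\lambda}\,x}$ and, for every $f\in\mathcal{D}$,
\[
\int_{0}^{\infty}\Bigl(\lambda f(x)-\tfrac12 f''(x)\Bigr)e^{-\sqrt{2\lambda}\,x}\,dx=\tfrac12 f'(0)+\tfrac{\sqrt{2\lambda}}{2}f(0)=\tfrac{\sqrt{2\lambda}}{2}f(0),
\]
so the nonzero measure $\mu=2\lambda\,e^{-\sqrt{2\lambda}\,x}\,dx-\sqrt{2\lambda}\,\delta_{0}$ annihilates $(\lambda-A^{0})\mathcal{D}$ (the atom contributes $-\sqrt{2\lambda}\,\lambda f(0)$, since $A^{0}f(0)=0$ on $\mathcal{D}$). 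Equivalently, without any duality: take $h$ smooth, compactly supported, with $h(x)=x$ on $[0,1]$; then $h$ is twice differentiable, $A^{0}h=\tfrac12 h''$ vanishes near $0$, and $h$ belongs to $D(A^{0})$ both by the lemma's own description of the domain and by the standard characterization of the absorbed generator ($f''\in C_{0}$, $f''(0)=0$, with no constraint on $f'(0)$). If $f_{n}\in\mathcal{D}$ satisfied $f_{n}\to h$ and $f_{n}''\to h''$ uniformly, then $\sup_{[0,1]}|f_{n}''|\to 0$, and Taylor's formula $f_{n}(x)-f_{n}(0)=\int_{0}^{x}(x-z)f_{n}''(z)\,dz$ (valid because $f_{n}'(0)=0$) would force $h$ to be constant on $[0,1]$, a contradiction. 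The structural point is that flatness imposes $f'(0)=0$, which is strictly stronger than the absorption condition $A^{c}f(0)=0$ defining the domain, and graph limits of flat functions inherit this extra constraint; so $\mathcal{D}$ cannot be graph-dense once $0$ is a regular boundary ($2c>-1$), and the step you defer is false there, not merely delicate. Your argument does close in the exit-boundary regime $2c\leq -1$, where $x^{2c}$ is non-integrable at $0$ and $\beta=0$ is forced; but in the complementary regime no amount of bookkeeping will complete it, and the same obstruction defeats the fallback via scale/speed-measure cores in \cite[Chapter~8]{EK86}, whose cores contain functions affine in the scale function near the absorbing boundary, which flat functions cannot reach.
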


Note that in this Lemma (and all the paper), Bessel process are generalized Bessel process with possibly negative dimension; see \cite{GJY03}.

\begin{proof}
First of all, it is well-known that the generator of the Bessel process in any dimension contains the space of twice differentiable functions on $(0,\infty)$ (see \cite{Kent,GJY03}). As a consequence, $D(A^{c})$ contains $\mathcal{D}$. Now, according to Lemma \ref{lem:goodSpace} and Proposition 17.9 of \cite{Kal}, we just needs to show that $\mathcal{D}$ is invariant under the action of $A^{c}$, which is clearly the case.  Consequently, $\mathcal{D}$ is indeed a core for $(A^{c},D(A^{c}))$.
\end{proof}
We can now prove the main result.
\begin{thm}
\label{th:scaling}
Under Assumptions~\ref{ass:b,K} and \ref{hyp:regul}, the sequence of contour processes $\mathcal{C}(\mathbb{T}_{n})_{n\geq 1}$ associated with the sequence of TOM trees $(\mathbb{T}_{n})_{n\geq 1}$ converges weakly to a (possibly absorbed) Bessel processes with dimension $(c-1)/2$.
\end{thm}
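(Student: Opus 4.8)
The statement asserts weak convergence of a sequence of Markov processes (the rescaled contour processes) to a Bessel process. The natural framework here is the theory of convergence of Feller processes via their generators: by the Trotter--Kurtz approximation theorem (see \cite[Theorem 17.25]{Kal} or \cite[Chapter 1, Theorem 6.5]{EK86}), to establish weak convergence of $(\mathcal{C}(\mathbb{T}_n))_{n\geq 1}$ to the Bessel process it suffices to exhibit a core $\mathcal{D}$ for the limiting generator $A^c$ on which $L_n f \to A^c f$ uniformly, together with tightness (which the two cited theorems package into the generator convergence once the processes are Feller and a suitable core is identified). The two preceding lemmas have precisely arranged the core: $\mathcal{D}$ is dense in $C_0(\mathbb{R}_+)$ and is a core for $A^c$. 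So the plan is to verify the uniform generator convergence on $\mathcal{D}$ and invoke the approximation theorem.

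**The main computation.** Let me take $f\in\mathcal{D}$, so $f$ is smooth, compactly supported, and flat at the origin. With the normalization $c_n^2=a_n=n$, the expansion \eqref{eq:devLim} reads
\[
L_n f(x)=\frac{\sqrt n}{1}\bigl(b(\sqrt n\,x)m(\sqrt n\,x)-1\bigr)f'(x)+\tfrac12 f''(x)\,b(\sqrt n\,x)m_2(\sqrt n\,x)+R_n(x),
\]
where $R_n$ is the remainder integral involving $f^{(3)}$. Under Assumption~\ref{hyp:regul}, the first coefficient satisfies $\sqrt n\,(b(\sqrt n\,x)m(\sqrt n\,x)-1)=\sqrt n\cdot\frac{c}{\sqrt n\,x}S_d(\sqrt n\,x)=\frac{c}{x}S_d(\sqrt n\,x)\to\frac{c}{x}$, and the second coefficient $b(\sqrt n\,x)m_2(\sqrt n\,x)=S_v(\sqrt n\,x)\to 1$. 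Thus the first two terms converge pointwise to $\frac{c}{x}f'(x)+\tfrac12 f''(x)=A^c f(x)$. The remainder is controlled by
\[
|R_n(x)|\leq a_n b(\sqrt n\,x)\,\|f^{(3)}\|_\infty\int_{\mathbb{R}_+}\frac{(y/c_n)^3}{6}\,K(\sqrt n\,x,dy)=\frac{\|f^{(3)}\|_\infty}{6\sqrt n}\,b(\sqrt n\,x)m_3(\sqrt n\,x),
\]
which tends to $0$ uniformly by the boundedness of $b(x)m_3(x)$ in point (3) of Assumption~\ref{hyp:regul}. The delicate point is uniformity near $x=0$: the coefficient $c/x$ blows up, but since every $f\in\mathcal{D}$ is flat at $0$ (all derivatives vanish), the products $\frac{c}{x}f'(x)$ and the discrepancies $\frac{c}{x}(S_d(\sqrt n\,x)-1)f'(x)$ stay bounded and vanish uniformly there; I would make this rigorous by splitting $\mathbb{R}_+$ into a neighborhood $[0,\delta]$ of the origin (where flatness of $f$ tames the singular factor) and the complement $[\delta,\infty)$ (where $S_d,S_v\to 1$ uniformly on compacts of a bounded support, $S_d,S_v$ being bounded and convergent at infinity gives uniformity on the support of $f$).

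**Conclusion and the hard part.** Having shown $\|L_n f-A^c f\|_\infty\to 0$ for every $f\in\mathcal{D}$, and knowing from the preceding lemma that $\mathcal{D}$ is a core for $(A^c,D(A^c))$ while each $\mathcal{C}(\mathbb{T}_n)$ is Feller (Theorem~\ref{thm:genContour}), the Trotter--Kurtz theorem delivers the weak convergence of the semigroups, hence of the processes in the Skorokhod sense, to the Bessel process absorbed at $0$. The main obstacle, as flagged in the introduction, is the absorption at $0$: the limiting generator is genuinely singular there, so the usual clean core arguments fail, and it is exactly the flatness condition built into $\mathcal{D}$ (Lemma~\ref{lem:goodSpace}) that rescues uniform convergence of $L_n f$ across the singularity. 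A secondary subtlety is confirming that the absorbed Bessel process is itself Feller with $\mathcal{D}$ as a core and that the boundary behavior (absorption versus reflection) matches the discrete scheme; for generalized Bessel processes of possibly negative dimension this is handled by the references \cite{GJY03,Kent} cited just before the theorem.
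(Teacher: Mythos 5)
Your overall strategy is exactly the paper's: reduce weak convergence of the processes to convergence of generators on the core $\mathcal{D}$ via the Ethier--Kurtz/Kallenberg approximation theorems, then verify $L_n f \to A^c f$ through the Taylor expansion \eqref{eq:devLim}, Assumption~\ref{hyp:regul}, and the flatness at the origin of $f\in\mathcal{D}$ (your $[0,\delta]$ versus $[\delta,\infty)$ splitting is a cosmetic variant of the paper's argument by contradiction along sequences $(x_n)$, resting on the same two facts: $f'(x)/x\to 0$ at $0$ and uniform convergence of $S_d,S_v$ away from $0$). Your computations, including the $O(n^{-1/2})$ bound on the third-order remainder via the boundedness of $b\,m_3$, are correct and match the paper's.

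There is, however, one genuine gap in the concluding step. The theorems you invoke do not merely require $\Vert L_n f - A^c f\Vert_\infty \to 0$ for $f$ in a core of $A^c$; they require that for each such $f$ there exist functions $f_n \in D(L_n)$ with $f_n \to f$ and $L_n f_n \to A^c f$. You implicitly take $f_n = f$, but $f \in \mathcal{D}$ is in general \emph{not} in $D(L_n)$: as in Theorem~\ref{thm:genContour}, membership in the domain of the generator of the absorbed contour process carries the boundary condition $L_n f(0)=0$, whereas for $f\in\mathcal{D}$ one only has (using $f'(0)=0$)
\[
L_n f(0) = n\, b(0) \int_{\mathbb{R}_+}\Bigl(f\bigl(y/\sqrt{n}\bigr)-f(0)\Bigr)\, K(0,dy),
\]
which is generically nonzero; flatness of $f$ at $0$ together with the bound on $b\,m_3$ only gives $|L_n f(0)| = O(n^{-1/2})$, not exact vanishing. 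This is precisely the point the paper's proof addresses: it replaces $f$ by a perturbed function $f_n$ belonging to $D(L_n)$, and its key estimate carries the additional error term $|L_n f(0)|$, which is then shown to vanish as $n\to\infty$. Your own estimates already show this discrepancy is asymptotically negligible, so the gap is fixable by the same device, but as written the appeal to Trotter--Kurtz is not licensed, and this domain correction is the one idea from the paper's proof that is missing from yours.
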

\begin{proof}
The proof relies on Theorem 2.2.5 and Theorem 1.6.1 of \cite{EK86}, showing that the convergence of the sequence of generators implies the weak convergence of the associated processes. To this end, let $f$ be an element of $\mathcal{D}$.
Now set for every $n\geq 1$,
\[
f_{n}(x)=f(x)-\int_{\mathbb{R}_{+}}\left(f\left(\frac{y}{\sqrt{n}}\right)-f(0)\right) \ K(0,dy).
\]
Clearly, for any integer $n\geq 1$, $f_{n}$ belongs to the domain $D(L_n)$ of the generator $L_{n}$. Moreover, it is easily seen that $f_{n}$ converges to $f$ in $C_{0}(\mathbb{R}_{+})$. Now, according to \eqref{eq:devLim}, we have, for $x>0$,
\begin{align*}
\left|L_{n}f_{n}(x)-A^{c}f(x)\right|\leq& |L_{n}f(0)|+\left|L_{n}f(x)-A^{c}f(x)\right|\\\leq&\left|\sqrt{n}\left(b(\sqrt{n}x)m(\sqrt{n}x) -1\right)-\frac{c}{x}\right |\left|f'(x)\right|+\frac{1}{2}|f''(x)|\left|b(\sqrt{n}x)m_{2}(\sqrt{n}x)-1\right|\\&+\left| nb(\sqrt{n}x)\int_{\mathbb{R}_{+}}\int_{x}^{x+y/\sqrt{n}}\frac{f^{(3)}(s)}{2}\left(s-x-\frac{y}{\sqrt{n}}\right)^{2}\ ds\ K(\sqrt{n}x,dy)\right|+|L_{n}f(0)|.
\end{align*}
Now, using Assumptions \ref{hyp:regul}, we get
\begin{multline}
\label{eq:supToBound}
\left|L_{n}f_{n}(x)-A^{q}f(x)\right|\leq\left|S_{d}(\sqrt{n}x)-1\right |\frac{c\left|f'(x)\right|}{x}+\frac{1}{2}\left|f''(x)\right|\left|S_{v}(\sqrt{n}x)-1\right|\\+\left| nb(\sqrt{n}x)\int_{\mathbb{R}_{+}}\int_{x}^{x+y/\sqrt{n}}\frac{f^{(3)}(s)}{2}\left(s-x-\frac{y}{\sqrt{n}}\right)^{2}\ ds\ K(\sqrt{n}x,dy)\right|+|L_{n}f(0)|.
\end{multline}
Hence, we have, for all $x>1$,
\begin{align*}
\left|L_{n}f_{n}(x)-A^{q}f(x)\right| \leq& |L_{n}f(0)|+\mathcal{C}\Bigg(c\left|S_{d}(nx)-1\right |+\frac{1}{2}\left|S_{v}(nx)-1\right|+ \frac{nb(\sqrt{n}x)m_{3}(\sqrt{n}x)}{2n^{3/2}}\Bigg),
\end{align*}
where \[
\mathcal{C}=\max\left(\sup_{x\geq1}\left|\frac{f'(x)}{x}\right|,\|f''\|_{\infty},\|f^{(3)}\|_{\infty} \right).
\]
Now, using that $\left|S_{d}(nx)-1\right |$ and $\left|S_{v}(nx)-1\right|$ converges to $0$, as $n$ goes to infinity, uniformly on any set of the form $[a,\infty)$ (for $a>0$). We get the uniform convergence on $[1,\infty)$.
In addition, $\left|L_{n}f_{n}(0)-A^{q}f(0)\right|=0$, so we only have to study the supremum over $(0,1]$. To this end we use Equation \eqref{eq:supToBound}. Let us assume that
\[
\limsup_{n\to \infty}\sup_{x\in(0,1]}\left|S_{d}(nx)-1\right|\frac{c\left|f'(x)\right|}{x}=\alpha.
\]
This implies that there exists two sequences $(k_{n})_{n\geq 1}\subset \mathbb{N}$ and $(x_{n})_{n\geq 1}\subset(0,1]$ such that
\[
\left|S_{d}(k_{n}x_{n})-1\right|\frac{c\left|f'(x_{n})\right|}{x_{n}}\geq\alpha, \quad \forall n\geq 1,
\]
where $k_{n}\geq n$, for all positive integer $n$.
Now, if $$\liminf_{n\to\infty}x_{n}>0,$$ using the uniform convergence  property of $\left|S_{d}(k_{n}x)-1\right|$ and the boundedness of $\frac{c\left|f'(x)\right|}{x}$, $\alpha$ has to be equal to $0$. On the other hand, if $$\liminf_{n\to\infty}x_{n}=0,$$
extracting a subsequence if needed, we have
\[
\alpha\leq\limsup_{n\to \infty}\left|S_{d}(k_{n}x_{n})-1\right|\frac{c\left|f'(x_{n})\right|}{x_{n}}\leq M \limsup_{n\to\infty} \frac{\left|f'(x_{n})\right|}{x_{n}}=0,
\]
where $M$ is an upper bound for $\left|S_{d}(x)-1\right|$. Let us also point out that $\frac{\left|f'(x_{n})\right|}{x_{n}}$ converges to $0$ because $f'(x)$ goes to $0$ as $x$ goes to $0$ faster than any polynomial.

The study of the other terms of \eqref{eq:supToBound} follows the same lines. Consequently, we obtain the convergence of the sequence of generators and thus the result.
\end{proof}
This last result implies, in virtue of the result of \cite{lambertTOM}, the convergence of the sequence of trees.
\begin{cor}
Under Assumption~\ref{ass:b,K} and \ref{hyp:regul}, the sequence of TOM trees $(\mathbb{T}_{n})_{n\geq 1}$ converges weakly, in the Gromov-Hausdorf-Prokorhov topology, to a TOM tree whose contour process is a Bessel process with dimension $(c-1)/2$.
\end{cor}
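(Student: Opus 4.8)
The plan is to deduce this statement directly from Theorem~\ref{th:scaling} together with the correspondence between TOM trees and their contour functions established in \cite{lambertTOM}. Recall from \cite[Theorem 1]{lambertTOM} that the contour map is a bijection between TOM trees and c\`ad-l\`ag functions with no negative jumps, and---crucially for us---that this bijection is continuous when TOM trees are equipped with the Gromov-Hausdorff-Prokhorov topology and contour functions with the Skorokhod topology. Denote by $\Phi$ the inverse map, sending a contour function to the TOM tree it codes. By the construction of the rescaled trees in \eqref{eq:scaledl}, the process $\mathcal{C}(\mathbb{T}_n)$ is precisely the contour of $(\mathbb{T}_n, d_n, \lambda_n)$, so that $(\mathbb{T}_n, d_n, \lambda_n) = \Phi(\mathcal{C}(\mathbb{T}_n))$.

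First I would record that Theorem~\ref{th:scaling} gives the weak convergence $\mathcal{C}(\mathbb{T}_n) \Rightarrow B$, where $B$ is a (possibly absorbed) Bessel process of dimension $(c-1)/2$. Since $B$ has almost surely continuous paths, this convergence in the Skorokhod space $\mathbb{D}(\mathbb{R}_+)$ is in fact convergence for the topology of uniform convergence on compact sets, the limit being continuous (see \cite[Chapter 3]{EK86}). In particular the weak limit $B$ is carried by the set of continuous no-negative-jump functions, a subset on which the coding map $\Phi$ is well behaved.

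The second and final step is an application of the continuous mapping theorem. Since $\Phi$ is continuous at every continuous contour function and $B$ takes values in this set almost surely, the continuous mapping theorem yields $\Phi(\mathcal{C}(\mathbb{T}_n)) \Rightarrow \Phi(B)$ weakly in the Gromov-Hausdorff-Prokhorov topology. By definition $\Phi(B)$ is the TOM tree whose contour is the Bessel process $B$, which is exactly the asserted limit.

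I expect the only genuine difficulty to lie in justifying carefully that the continuity statement of \cite{lambertTOM} applies at the limiting object, in particular when the Bessel process is absorbed at $0$ (corresponding to a finite limiting tree). One must also verify that the scaling $\lambda_n = (a_n/c_n)\lambda$ of the length measure is exactly the one making $\mathcal{C}(\mathbb{T}_n)$ the contour of the rescaled TOM tree, and that the set of continuous contour functions on which $\Phi$ is continuous indeed has full measure under the law of $B$. Both points are consequences of the analysis leading to Theorem~\ref{th:scaling} and of the results in \cite{lambertTOM}, but they are where real verification is needed rather than routine bookkeeping.
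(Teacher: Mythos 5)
Your proposal is correct and follows essentially the same route as the paper: the paper's proof is simply a citation of \cite[Appendix A.4]{lambertTOM} (or \cite[Proposition 2.3.13]{L17}), which encapsulates exactly the argument you spell out, namely the continuity of the correspondence between TOM trees and their contour functions combined with Theorem~\ref{th:scaling} and the continuous mapping theorem. The delicate points you flag (continuity of the coding map at the limit, the absorbed case, the scaling of the measure) are precisely what those cited results handle.
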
 

\begin{proof}
This is a direct consequence of \cite[Appendix A.4]{lambertTOM} or \cite[Proposition 2.3.13]{L17}
\end{proof}

\begin{rem}[Bessel tree]
The construction of $(\mathbb{T},d,\lambda)$ of the Bessel tree from its contour process is described in \cite[Section 1]{lambertTOM}; see also \cite{duqu,LeGallTree}.
\end{rem}

\begin{rem}[On the proof]
Instead of using a functional approach to prove the scaling limit, one can think of using a Martingale approach as in \cite[Chapter 7, Theorem 4.1 p.354]{EK86}. Indeed, Assumptions are simpler in the sense that it is not required to know the explicit domains of the generator, and we only need some martingales. However, to prove the assumptions of \cite[Chapter 7, Theorem 4.1 p.354]{EK86} needs to control uniformly the process in $(0,+\infty)$, that is how it is far from $0$. We do write this here but with this approach we only arrive to prove Theorem~\ref{th:scaling} in the case $c\geq -1/2$. 
\end{rem}

\begin{rem}[Applications]
It was quite surprising that Bessel process appears naturally in this context; even if it is related to others random trees in others contexts \cite{LG15}.
There exists a lot of results on Bessel processes; see for instance \cite{RY, GJY03} and it then leads to new properties. For instance, \cite[Chapter XI]{RY} gives that, under Assumption~\ref{hyp:regul} the tree is almost surely finite when $c\leq 1/2$ and it is supercritical for $c>1/2$. In a certain sense, this refines Corollary~\ref{cor:lyap}. Among many other results, Theorem 3 and Theorem 4 of \cite{PY99} permit for instance to have some asymptotics of $\mathcal{H}(\mathbb{T})$ when Assumption~\ref{hyp:regul} and $c\to 0$.
\end{rem}

\subsection{No scaling limits for naive JCCP}
In this section, we show that the convergence of a sequence of chronological tree satisfying $b$ goes to infinity as $t$ increase and that of $K(t,dy)$ converges to $\delta_{0}(dy)$ cannot be done using JCCP without considering a renormalization of the metric and the measure. This is due to the following result.
\begin{thm}
\label{prop:noscaling}
Under Assumption~\ref{ass:b,K}, there is no diffusive limits to any sequence of processes in $\mathbb{D}([0,1])$ with generators of the form
$$L_{n}f(x)=-f(x)+b_{n}(x)\int_{\mathbb{R}_{+}}(f(x+y)-f(x))\ K_{n}(x,dy).$$
\end{thm}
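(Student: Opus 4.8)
The plan is to exploit the rigid geometry of the contour dynamics rather than to analyse the generators $L_n$ analytically. By the construction of Subsection~\ref{ssec:jumpProc}, any process $(X^{(n)}_t)_{t\in[0,1]}$ generated by $L_n$ decreases deterministically with slope $-1$ between its jumps, and all of its jumps are strictly positive. The decisive observation is therefore that the shifted process
$$
A^{(n)}_t := X^{(n)}_t + t, \qquad t\in[0,1],
$$
is non-decreasing: it is constant on each linear-decrease stretch, where $\tfrac{d}{dt}(X^{(n)}_t+t)=-1+1=0$, and it increases exactly at the upward jumps of $X^{(n)}$; absorption at $0$ does not break this, since afterwards $A^{(n)}_t=t$ keeps increasing. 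Crucially, this holds for every choice of $b_n$ and $K_n$, which makes the obstruction uniform over the whole class of sequences.

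First I would argue by contradiction: assume $X^{(n)}\Rightarrow X$ in $\mathbb{D}([0,1])$ with $X$ a non-degenerate diffusion (continuous paths, strictly positive quadratic variation; the degenerate, deterministic case is of course attainable, e.g.\ by letting $b_n\to 0$, and is not what is meant by a diffusive limit). Since the limiting law is carried by continuous paths, every evaluation map $\omega\mapsto\omega(u)$ is almost surely continuous at $X$, so the finite-dimensional marginals converge \cite{EK86}: $(X^{(n)}_s,X^{(n)}_t)\Rightarrow (X_s,X_t)$ for all $0\le s<t\le 1$. For such a fixed pair, $A^{(n)}_t-A^{(n)}_s=X^{(n)}_t-X^{(n)}_s+(t-s)\ge 0$ almost surely; passing to the limit and using that $[0,+\infty)$ is closed (Portmanteau), I obtain $X_t-X_s+(t-s)\ge 0$ almost surely. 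Running this over a countable dense set of pairs $(s,t)$ and invoking the continuity of $X$, I conclude that $t\mapsto A_t:=X_t+t$ is almost surely non-decreasing.

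I would then close with a quadratic-variation argument. A continuous non-decreasing process has finite variation and hence zero quadratic variation; since adding the finite-variation term $t$ does not change quadratic variation, $[X]\equiv[A]\equiv 0$. This contradicts non-degeneracy, for which $[X]_t=\int_0^t\sigma^2(X_s)\,ds>0$. Thus no non-degenerate diffusion can arise as a limit: the unbounded-variation oscillations of a genuine diffusion are incompatible with a drift of fixed unit slope, and the only continuous limits permitted are of finite variation. This is precisely the phenomenon anticipated in the introduction, namely that to observe a true diffusion one must accelerate the reading pace, i.e.\ rescale the drift coefficient as in Section~\ref{sec:scaling}.

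I expect the main obstacle to be the transfer of the monotonicity of $A^{(n)}$ to the limit $A$ in the second paragraph. Weak convergence in the Skorokhod topology only yields convergence of finite-dimensional marginals at continuity points of the limiting law, so the argument genuinely depends on the a priori continuity of the candidate diffusion $X$; without it one could not evaluate paths at the prescribed times $s,t$. Some care is also needed so that the exceptional null set on which monotonicity might fail is controlled uniformly, which is exactly why one reduces to a countable dense set of times and then uses path-continuity to fill in the gaps.
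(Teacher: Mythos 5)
Your proof is correct, but it takes a genuinely different technical route from the paper's. Both arguments rest on the same geometric fact --- the only downward motion of a process generated by $L_n$ is the unit-slope drift --- but you encode it as the monotonicity of $A^{(n)}_t = X^{(n)}_t + t$, whereas the paper encodes it as the bound $V_-(X^{(n)})\leq 1$ on the negative variation of the paths. From there the two proofs diverge. The paper proves a path-space lemma: the set $V_1=\{f\in\mathbb{D}([0,1]) \mid V_-(f)\leq 1\}$ is closed for the Skorokhod topology, so the Portmanteau theorem applied directly on $\mathbb{D}([0,1])$ gives $\mu(V_1)=1$ for \emph{any} weak limit $\mu$; the contradiction then comes from a second lemma stating that a path with bounded negative variation but unbounded total variation must tend to $+\infty$ at the right endpoint, which is impossible for a finite, continuous diffusion path. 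You instead avoid putting any topology argument on path space: using the assumed continuity of the limit you obtain finite-dimensional convergence, transfer the inequality $X_t-X_s\geq -(t-s)$ to the limit by Portmanteau in $\mathbb{R}^2$ over a countable dense set of pairs, fill in by path continuity, and conclude with the observation that a continuous monotone (hence finite-variation) process has zero quadratic variation, contradicting non-degeneracy. Your route is more elementary --- it needs neither the Skorokhod-closedness lemma nor the explosion lemma --- but it uses the continuity of the candidate limit as an essential input, exactly as you point out in your last paragraph; the paper's argument establishes the stronger intermediate fact that every weak limit point, continuous or not, is almost surely carried by $V_1$, and only then specializes to diffusions. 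Both proofs correctly interpret ``diffusive limit'' as excluding degenerate finite-variation limits, which, as you note, are attainable.
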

The proof of this result relies on the two following Lemmas.
\begin{lem}Define for any $f\in\mathbb{D}([0,1])$, the negative variations of $f$ by
$$
V_-(f)=\sup_\sigma \sum_{i=1}^{|\sigma|}|f(\sigma_i)-f(\sigma_{i-1})|\mathbf{1}_{\{f(\sigma_i)<f(\sigma_{i-1})\}},
$$
where the supremum is taken on the set of all partition $\sigma$ of $[0,1]$ with $\sigma_{i}<\sigma_{i+1}$ for all integer $i\leq |\sigma|$, where  $|\sigma|$ is the cardinality of the partition $\sigma$.  Then, the set
$$
V_1=\{f \in D([0,1]) \mid V_{-}(f)\leq 1 \}
$$
is closed for the Skorokhod topology.
\end{lem}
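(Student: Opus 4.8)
The plan is to reduce the closedness of $V_1$ for the Skorokhod topology to an elementary passage to the limit along a fixed partition, by exploiting two features of the functional $V_-$: its invariance under increasing time changes and its behaviour under uniform convergence. The first step I would take is to remove the indicator from the summand. For real numbers $a,b$ one has $|b-a|\,\mathbf{1}_{\{b<a\}}=(a-b)^+$, so that
\[
V_-(f)=\sup_\sigma \sum_{i=1}^{|\sigma|} \bigl(f(\sigma_{i-1})-f(\sigma_i)\bigr)^+ .
\]
This rewriting exhibits $V_-$ as a supremum of functionals $f\mapsto \sum_i (f(\sigma_{i-1})-f(\sigma_i))^+$, each of which is continuous under uniform convergence, the positive part being $1$-Lipschitz and the evaluations at fixed points $\sigma_i$ being continuous.

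Next I would record that $V_-$ is invariant under any increasing homeomorphism $\lambda$ of $[0,1]$. Indeed $\lambda$ carries partitions bijectively onto partitions while preserving their order, so that the supremum defining $V_-(f\circ\lambda)$ ranges over exactly the same sums as that defining $V_-(f)$, whence $V_-(f\circ\lambda)=V_-(f)$. Now take a sequence $f_n\in V_1$ with $f_n\to f$ for the Skorokhod topology. By the usual characterisation of this topology there exist increasing homeomorphisms $\lambda_n$ of $[0,1]$ with $\|\lambda_n-\mathrm{id}\|_\infty\to 0$ and $g_n:=f_n\circ\lambda_n\to f$ uniformly. By time-change invariance, $V_-(g_n)=V_-(f_n)\leq 1$ for every $n$.

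The final step is to fix an arbitrary partition $\sigma$ of $[0,1]$ and pass to the limit. Uniform convergence gives $g_n(\sigma_i)\to f(\sigma_i)$ for each $i$, hence, by continuity of $x\mapsto x^+$,
\[
\sum_{i=1}^{|\sigma|}\bigl(f(\sigma_{i-1})-f(\sigma_i)\bigr)^+
=\lim_{n\to\infty}\sum_{i=1}^{|\sigma|}\bigl(g_n(\sigma_{i-1})-g_n(\sigma_i)\bigr)^+
\leq \liminf_{n\to\infty} V_-(g_n)\leq 1 .
\]
Taking the supremum over all partitions $\sigma$ yields $V_-(f)\leq 1$, so $f\in V_1$ and $V_1$ is closed.

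The main obstacle, and the only place requiring genuine care, is the time deformation built into the Skorokhod topology: a direct attempt to pass to the limit in the partition sums fails because convergence is merely Skorokhod, not uniform, and the partition points may straddle jumps that move with $n$. The observation that $V_-$ is blind to increasing reparametrisations is precisely what neutralises this difficulty, converting Skorokhod convergence of $(f_n)$ into uniform convergence of the reparametrised functions $(g_n)$; once this reduction is in place, the remainder is a routine lower-semicontinuity computation.
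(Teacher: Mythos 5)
Your proof is correct and is essentially the paper's own argument: both use the Skorokhod characterisation via increasing time changes $\lambda_n$ with $\|f_n\circ\lambda_n-f\|_\infty\to 0$, fix an arbitrary partition, pass to the limit in the partition sums, and bound them by $V_-(f_n)\leq 1$ using the fact that increasing reparametrisations carry partitions to partitions. Your positive-part rewriting $(a-b)^+$ and the explicit statement of time-change invariance are only presentational refinements of the same mechanism.
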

\begin{proof}
Let $f_{n}$ be a sequence of functions of $\mathbb{D}([0,1])$ converging to $f$ in the Skorokhod space $\mathbb{D}([0,1])$. Hence, there exists a sequence of strictly increasing continuous function $(\lambda_{n})$ such that
\[
\left\{
\begin{array}{l}
\lim\limits_{n\to\infty}\|\lambda_{n}-I\|_{\infty}=0,\\
\lim\limits_{n\to\infty}\|f_{n}\circ \lambda_{n}-f\|_{\infty}=0,
\end{array}
\right.
\]

where $I$ is the identity function on $[0,1]$. Now, let $\sigma$ be some partition of $[0,1]$, then 
\begin{equation}
\label{eq:1}
\sum_{i=1}^{|\sigma|}|f(\sigma_i)-f(\sigma_{i-1})|\mathbf{1}_{\{f(\sigma_i)<f(\sigma_{i-1})\}}=\lim\limits_{n\to \infty}\sum_{i=1}^{|\sigma|}|f_{n}(\lambda_{n}(\sigma_i))-f_{n}(\lambda_{n}(\sigma_{i-1}))|\mathbf{1}_{\{f_{n}(\lambda_{n}(\sigma_i))<f(\lambda_{n}(\sigma_{i-1}))\}}.
\end{equation}
On the other hand, since $\lambda_{n}$ is strictly increasing, there exists some partition $\tilde{\sigma}$ of $[0,1]$ such that $\tilde{\sigma}_{i}=\lambda_{n}(\sigma_{i})$, for all $i$. In particular,
\begin{equation}
\label{eq:2}
\sum_{i=1}^{|\sigma|}|f_{n}(\lambda(\sigma_i))-f_{n}(\lambda_{n}(\sigma_{i-1}))|\mathbf{1}_{\{f_{n}(\lambda_{n}(\sigma_i))<f(\lambda_{n}(\sigma_{i-1}))\}}\leq \sup_\sigma \sum_{i=1}^{|\sigma|}|f_n(\sigma_i)-f_n(\sigma_{i-1})|\mathbf{1}_{\{f_n(\sigma_i)<f_n(\sigma_{i-1})\}}\leq 1.
\end{equation}
Finally, equations \eqref{eq:1} and \eqref{eq:2} ends to proof.
\end{proof}
\begin{lem}
Denote
\[
V(f,x)=\sup_\sigma \sum_{i=1}^{|\sigma|}|f(\sigma_i)-f(\sigma_{i-1})|\mathbf{1}_{\{f(\sigma_i)<f(\sigma_{i-1})\}},
\]
where the supremum is taken on the set of all partition $\sigma$ of $[0,x]$. Let $f$ be a function of $\mathbb{D}([0,1])$ such that
$$
\forall x \in[0,1), \ V(f,x)<\infty, \ V_{-}(f)<\infty, \quad \lim\limits_{x\to1}V(f,x)=\infty.
$$
Then $
\lim\limits_{x\to 1}f(x)=\infty\
$
\end{lem}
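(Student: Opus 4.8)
The plan is to recognize the statement as the Jordan decomposition of a function of bounded variation, the only real subtlety being the behaviour at the right endpoint. I read $V(f,x)$ as the total variation of $f$ over $[0,x]$ (the natural companion to the negative variation $V_-$, obtained by dropping the sign restriction in the sum), so that $V(f,x)=V_+(f,x)+V_-(f,x)$, where $V_+(f,x)$ and $V_-(f,x)$ denote the positive and negative variations of $f$ on $[0,x]$, both nondecreasing in $x$. For every $x\in[0,1)$ the hypotheses $V(f,x)<\infty$ and $V_-(f,x)\le V_-(f)<\infty$ show that $f$ is of bounded variation on $[0,x]$, and hence for any c\`adl\`ag function of bounded variation one has the decomposition
\[
f(x)=f(0)+V_+(f,x)-V_-(f,x),\qquad x\in[0,1).
\]

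I would then separate the two variations as $x\uparrow 1$. The map $x\mapsto V_-(f,x)$ is nondecreasing and bounded above by $V_-(f)<\infty$, so it converges to a finite limit $\ell_-\le V_-(f)$. Since $V(f,x)=V_+(f,x)+V_-(f,x)\to\infty$ while $V_-(f,x)$ stays bounded, the positive variation must diverge, $V_+(f,x)\to+\infty$ (if one instead reads $V$ as the positive variation, this is the hypothesis verbatim, and the argument only shortens). Substituting into the decomposition yields
\[
f(x)=f(0)+V_+(f,x)-V_-(f,x)\xrightarrow[x\to1]{}+\infty,
\]
which is the desired conclusion; in $\mathbb{D}([0,1])$ this says precisely that the left limit of $f$ at $1$ is infinite.

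The step requiring the most care is the decomposition identity itself: I would justify $f(x)-f(0)=V_+(f,x)-V_-(f,x)$ for c\`adl\`ag functions of bounded variation, which is the standard Jordan/Hahn decomposition and holds without any continuity assumption, together with the elementary monotonicity of $x\mapsto V_\pm(f,x)$ used to pass to the limit. The conceptual heart of the argument — the observation that converts a variation blow-up into divergence of $f$ — is that the growth of $V(f,\cdot)$ near $1$ cannot be carried by the downward increments, since $V_-(f,\cdot)$ is globally bounded by $V_-(f)$; it must therefore be carried entirely by the upward increments, and unbounded net upward motion against bounded downward motion forces $f\to+\infty$.
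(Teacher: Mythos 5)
Your proof is correct and rests on the same mechanism as the paper's own argument: since the negative variation is globally bounded by $V_-(f)$, the blow-up of $V(f,x)$ as $x\to 1$ must be carried entirely by the upward increments, which forces $f(x)\to+\infty$; you also correctly resolved the evident typo in the statement by reading $V(f,x)$ as the total variation of $f$ on $[0,x]$, which is exactly how the paper's proof uses it. The only difference is packaging: you invoke the Jordan decomposition identity $f(x)-f(0)=V_+(f,x)-V_-(f,x)$ together with monotonicity of $x\mapsto V_\pm(f,x)$, whereas the paper carries out the equivalent partition-level estimate by hand, splitting the increments of a partition with large total-variation sum into positive and negative parts and arriving at the inequality $f(x)\geq f(0)+V(f,x)-2V_-(f)$.
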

\begin{proof}
Let $f$ be such a function.
Without loss of generality, we may assume that $f(0)=0$. 
Because of $V(f,x)\xrightarrow[x\to 1]{}\infty$, we have that, for any $M>0$, there exists some partition $\sigma$ of $[0,1]$ such that
\[
\sum_{i=1}^{|\sigma|}|f(\sigma_i)-f(\sigma_{i-1})|>M,
\]
and $\sigma_{1}=0$ and $\sigma_{|\sigma|}=1$.
Now, we have
\begin{multline*}
f(1)=\sum_{i=1}^{|\sigma|}f(\sigma_{i})-f(\sigma_{i-1})=\sum_{i=1}^{|\sigma|}\left(f(\sigma_{i})-f(\sigma_{i-1})\right)\mathbf{1}_{\{f(\sigma_i)\geq f(\sigma_{i-1})\}}-\sum_{i=1}^{|\sigma|}\left|f(\sigma_{i})-f(\sigma_{i-1})\right|\mathbf{1}_{\{f(\sigma_i)<f(\sigma_{i-1})\}}\\
\geq \sum_{i=1}^{|\sigma|}\left(f(\sigma_{i})-f(\sigma_{i-1})\right)\mathbf{1}_{\{f(\sigma_i)\geq f(\sigma_{i-1})\}}-V_{-}(f).
\end{multline*}
Moreover, it is easily seen that
\begin{multline*}
\sum_{i=1}^{|\sigma|}\left(f(\sigma_{i})-f(\sigma_{i-1})\right)\mathbf{1}_{\{f(\sigma_i)\geq f(\sigma_{i-1})\}}=\sum_{i=1}^{|\sigma|}\left|f(\sigma_{i})-f(\sigma_{i-1})\right|-\sum_{i=1}^{|\sigma|}\left|f(\sigma_{i})-f(\sigma_{i-1})\right|\mathbf{1}_{\{f(\sigma_i)<f(\sigma_{i-1})\}}\\ \geq \sum_{i=1}^{|\sigma|}\left|f(\sigma_{i})-f(\sigma_{i-1})\right|-V_{-}(f).
\end{multline*}
Hence,
\[
f(1)=\sum_{i=1}^{|\sigma|}f(\sigma_{i})-f(\sigma_{i-1})\geq
\sum_{i=1}^{|\sigma|}\left|f(\sigma_{i})-f(\sigma_{i-1})\right|-2V_{-}(f)\geq M-V_{-}(f).
\]
Finally, since $M$ is arbitrary, we obtain the desired result.
\end{proof}
In particular, let us highlight that a diffusion has unbounded variation on any compact set. So, such a diffusion as one in the preceding lemma would instantaneously explode.
We can now prove our theorem.
\begin{proof}[Proof of Theorem~\ref{prop:noscaling}]
Let $(\mu_{n})_{n\geq 1}$ be the sequence of measure on $\mathbb{D}([0,1])$ corresponding to the laws the sequence $(X^{n})_{n\geq 1}$. Assume that $(\mu_{n})_{n\geq 1}$ converges weakly to some measure $\mu$. Since, the only negative variation of $X^{n}$ is provided by the negative drift (with slope $1$), we have that 
\[
V_{-}(X^{n})\leq 1,\quad \text{alsmot surely}.
\]
Hence, $\mu_{n}(V_{1})=1$ for all $n\geq 1$. Now, since $V_{1}$ is closed, the portemanteau theorem entails that
\[
1=\limsup_{n\to\infty}\mu_{n}(V_{1})\leq \mu(V_{1}).
\]
Hence, we necessarily have $\mu(V_{1})=1$. This ends the proof.
\end{proof}

\section*{Appendix: a quick reminder on Generator}

Let us begin by reminding some notions on the generator of Markov processes (which also enable to precise our notation). 

Given any Markov process $(Z_t)_{t\geq 0}$ on $\mathbb{R}_+$ (or any more general state space). One can define its associated semigroup $(P_t)_{t\geq 0}$ by
$$
\forall t\geq 0, \forall x\in \mathbb{R}_+, \qquad P_tf(x) = \mathbb{E}[f(Z_t) \ | \ Z_0=x],
$$
for every function $f$ in some space of functions $L$. This space of functions can be chosen as the space of continuous functions vanishing at infinity $C_0(\mathbb{R}_+)$, the space of bounded and continuous functions $C_b(\mathbb{R}_+)$ or the space $L^2(\mathbb{R}_+,\pi)$ of functions which are square-integrable functions with respect to the invariant measure of the semigroup (when it exists); see for instance \cite{B94,EK86}.  when $(L, \| \cdot \|_L)$ is a Banach space (for some norm $\| \cdot \|_L$), we say that $(P_t)$ is strongly continuous if
$$
\lim_{t\to 0} \Vert P_t f- f \Vert_L =0,
$$ for every function $f\in L$.  Recall that when $L=C_0$ then the process $(Z_t)_{t\geq 0}$ is called a Feller process.

One can define the \textit{ infinitesimal generator} or \textit{strong generator} $A$ of the semigroup $(P_t)_{t\geq 0}$ on $L$  by
$$
Af= \lim_{t \to 0} \frac{1}{t}(T_t f- f),
$$
for every $f$ in $\mathcal{D}_L (A) \subset L$, the domain of $A$, which corresponds to the functions $f\in L$ such that the previous limit holds. When there is no ambiguity on $L$ we note $\mathcal{D} (A)$. A powerful property of the generator is given by the Dynkin formula and reads as the process $(M_t)_{t\geq 0}$, given by
\begin{equation}
\label{eq:dinkyn}
M_t = f(X_t) - \int_0^t Af(X_s) ds,
\end{equation}
is a martingale for any $f\in L$. see in particular \cite[Theorem (14.13) p.31]{D93}. In order to generalize this property, one can define the \textit{full generator} 
of $(Z_t)_{t\geq 0}$ on the domain 
$$
\overline{D}_L(A)= \left\{f \in L \ | \ \exists g_f \in L, \forall t\geq 0 \quad P_t f=f + \int_0^t P_t g ds \right\}. 
$$ 
For $f\in \overline{D}_L(A)$, we set $Af=g_f$. Again, $(M_t)$ defined by \eqref{eq:dinkyn} is a martingale for all $f\in \overline{D}(A)$; see \cite[Proposition 1.7 p.162]{EK86}. We can again weaken the assumptions on $f$ by considering directly the space $\widehat{D}(A)$ of functions $f$ such that there exists a function $g_f$, such that $(M_t)_{t\geq 0}$, defined by
$$
 \forall t\geq 0, \quad M_t = f(Z_t) - \int_0^t g_f(X_s) ds, 
$$ 
is a local Martingale. Again we set $Af=g_f$, and $A$ with its domain $\widehat{D}(A)$ is called the extended generator. Note that the extended and full generator are multi-valued, in the sense that it is possible to find different $g_f$ for the same $f$. However, these functions will be different only on a negligible set (see \cite[page 32-33]{D93}). Also, there is no problem of notation because $D_L(A)\subset \overline{D}_L(A) \subset \widehat{D}(A)$ and if $L'\subset L$ then $D_{L'}(A)\subset D_L(A)$, $\overline{D}_{L'}(A) \subset \overline{D}_L(A)$.

\section*{Acknowledgments}

The authors have received the support of the Chair ``Mod\'elisation Math\'ematique et Biodiversit\'e'' of VEOLIA-Ecole Polytechni\-que-MnHn-FX.
B.C has been supported by ANR projects, funded by the French Ministry of Research ANR PIECE (ANR-12-JS01-0006-01).

\bibliographystyle{plain}
\bibliography{biblio.bib}

\begin{thebibliography}{10}

\bibitem{ADJ13}
Louigi Addario-Berry, Luc Devroye, and Svante Janson.
\newblock Sub-gaussian tail bounds for the width and height of conditioned
  galton–watson trees.
\newblock {\em Ann. Probab.}, 41(2):1072--1087, 03 2013.

\bibitem{aldousBTree}
David Aldous.
\newblock The continuum random tree iii.
\newblock {\em The Annals of Probability}, pages 248--289, 1993.

\bibitem{ABGMKZ}
R.~Aza\"is, J.-B. Bardet, A.~G\'enadot, N.~Krell, and P.-A. Zitt.
\newblock Piecewise deterministic {Markov} process -- recent results.
\newblock {\em ESAIM: Proceedings}, 44:276--290, 2014.

\bibitem{B94}
Dominique Bakry.
\newblock L'hypercontractivit\'e et son utilisation en th\'eorie des
  semigroupes.
\newblock In {\em Lectures on probability theory ({S}aint-{F}lour, 1992)},
  volume 1581 of {\em Lecture Notes in Math.}, pages 1--114. Springer, Berlin,
  1994.

\bibitem{B98}
J.~Bertoin.
\newblock {\em L{\'e}vy Processes}.
\newblock Cambridge Tracts in Mathematics. Cambridge University Press, 1998.

\bibitem{CH16}
Nicolas Champagnat and Beno\^it Henry.
\newblock Moments of the frequency spectrum of a splitting tree with neutral
  poissonian mutations.
\newblock {\em Electron. J. Probab.}, 21:34 pp., 2016.

\bibitem{CLR}
Nicolas Champagnat, Amaury Lambert, and Mathieu Richard.
\newblock Birth and death processes with neutral mutations.
\newblock {\em Int. J. Stoch. Anal.}, pages Art. ID 569081, 20, 2012.

\bibitem{CDGMMY}
{Cloez, Bertrand}, {Dessalles, Renaud}, {Genadot, Alexandre}, {Malrieu,
  Florent}, {Marguet, Aline}, and {Yvinec, Romain}.
\newblock Probabilistic and piecewise deterministic models in biology.
\newblock {\em ESAIM: Procs}, 60:225--245, 2017.

\bibitem{CD08}
O.~L.~V. Costa and F.~Dufour.
\newblock Stability and ergodicity of piecewise deterministic markov processes.
\newblock {\em SIAM Journal on Control and Optimization}, 47(2):1053--1077,
  2008.

\bibitem{D84}
M.~H.~A. Davis.
\newblock Piecewise-deterministic markov processes: a general class of
  nondiffusion stochastic models.
\newblock {\em J. Roy. Statist. Soc. Ser. B}, 46(3):353--388, With discussion,
  1984.

\bibitem{D93}
M.~H.~A. Davis.
\newblock {\em Markov models and optimization}, volume~49.
\newblock Monographs on Statistics and Applied Probability, Chapman \& Hall,
  1993.

\bibitem{SDZ}
Beno\^ite de~Saporta, Fran{\c{c}}ois Dufour, and Huilong Zhang.
\newblock {\em Numerical methods for simulation and optimization of piecewise
  deterministic Markov processes}.
\newblock John Wiley \& Sons, 2015.

\bibitem{D57}
Joseph~L Doob.
\newblock Conditional brownian motion and the boundary limits of harmonic
  functions.
\newblock {\em Bull. Soc. Math. France}, 85(4):431--458, 1957.

\bibitem{DFG}
Randal Douc, Gersende Fort, and Arnaud Guillin.
\newblock Subgeometric rates of convergence of f-ergodic strong markov
  processes.
\newblock {\em Stochastic Processes and their Applications}, 119(3):897 -- 923,
  2009.

\bibitem{DFMS}
Randal Douc, Gersende Fort, Eric Moulines, and Philippe Soulier.
\newblock Practical drift conditions for subgeometric rates of convergence.
\newblock {\em Ann. Appl. Probab.}, 14(3):1353--1377, 08 2004.

\bibitem{duqu}
Thomas Duquesne.
\newblock The coding of compact real trees by real valued functions.
\newblock {\em arXiv preprint math/0604106}, 2006.

\bibitem{EK86}
S.~N. Ethier and T.~G. Kurtz.
\newblock {\em {Markov Processes: Characterization and Convergence}}.
\newblock Wiley Series in Probability and Statistics, 1986.

\bibitem{GK}
J.~Geiger and G.~Kersting.
\newblock Depth-first search of random trees, and {P}oisson point processes.
\newblock In {\em Classical and modern branching processes ({M}inneapolis,
  {MN}, 1994)}, volume~84 of {\em IMA Vol. Math. Appl.}, pages 111--126.
  Springer, New York, 1997.

\bibitem{GJY03}
Anja Göing-Jaeschke and Marc Yor.
\newblock A survey and some generalizations of bessel processes.
\newblock {\em Bernoulli}, 9(2):313--349, 04 2003.

\bibitem{HJV}
P.~Haccou, P.~Jagers, V.A. Vatutin, and International~Institute for Applied
  Systems~Analysis.
\newblock {\em Branching Processes: Variation, Growth, and Extinction of
  Populations}.
\newblock Cambridge Studies in Adaptive Dynamics. Cambridge University Press,
  2005.

\bibitem{H16}
Martin Hairer.
\newblock Convergence of markov processes.
\newblock \url{http://www.hairer.org/notes/Convergence.pdf}, January 2016.

\bibitem{H15}
{Henry, Beno\^it}.
\newblock Central limit theorem for supercritical binary homogeneous
  crump-mode-jagers processes.
\newblock {\em ESAIM: PS}, 21:113--137, 2017.

\bibitem{scaleFunctionKyp}
F~Hubalek and E~Kyprianou.
\newblock Old and new examples of scale functions for spectrally negative
  l{\'e}vy processes.
\newblock In {\em Seminar on Stochastic Analysis, Random Fields and
  Applications VI}, pages 119--145. Springer, 2011.

\bibitem{Jagers}
Peter Jagers.
\newblock Convergence of general branching processes and functionals thereof.
\newblock {\em J. Appl. Probability}, 11:471--478, 1974.

\bibitem{Kal}
Olav Kallenberg.
\newblock {\em Foundations of modern probability}.
\newblock Probability and its Applications (New York). Springer-Verlag, New
  York, second edition, 2002.

\bibitem{KS}
I.~Karatzas and S.~Shreve.
\newblock {\em Brownian Motion and Stochastic Calculus}.
\newblock Graduate Texts in Mathematics. Springer New York, 2014.

\bibitem{Kent}
John Kent.
\newblock Some probabilistic properties of {B}essel functions.
\newblock {\em Ann. Probab.}, 6(5):760--770, 1978.

\bibitem{K11}
Vassili~N Kolokoltsov.
\newblock {\em Markov processes, semigroups, and generators}, volume~38.
\newblock Walter de Gruyter, 2011.

\bibitem{K17}
Igor Kortchemski.
\newblock Sub-exponential tail bounds for conditioned stable
  bienaym{\'e}--galton--watson trees.
\newblock {\em Probability Theory and Related Fields}, 168(1-2):1--40, 2017.

\bibitem{Lambert2010}
Amaury Lambert.
\newblock The contour of splitting trees is a {L}\'evy process.
\newblock {\em Ann. Probab.}, 38(1):348--395, 2010.

\bibitem{L17}
Amaury Lambert.
\newblock Probabilistic models for the (sub)tree(s) of life.
\newblock {\em Braz. J. Probab. Stat.}, 31(3):415--475, 08 2017.

\bibitem{lambertTOM}
Amaury Lambert and Ger{\'o}nimo~Uribe Bravo.
\newblock Totally ordered measured trees and splitting trees with infinite
  variation.
\newblock {\em arXiv preprint arXiv:1607.02114}, 2016.

\bibitem{LP13}
Amaury Lambert and Lea Popovic.
\newblock The coalescent point process of branching trees.
\newblock {\em Ann. Appl. Probab.}, 23(1):99--144, 02 2013.

\bibitem{LSZ13}
Amaury Lambert, Florian Simatos, and Bert Zwart.
\newblock Scaling limits via excursion theory: Interplay between
  crump–mode–jagers branching processes and processor-sharing queues.
\newblock {\em Ann. Appl. Probab.}, 23(6):2357--2381, 12 2013.

\bibitem{LS13}
Amaury Lambert and Tanja Stadler.
\newblock Birth--death models and coalescent point processes: The shape and
  probability of reconstructed phylogenies.
\newblock {\em Theoretical population biology}, 90:113--128, 2013.

\bibitem{LeGallTree}
Jean-Fran\c{c}ois Le~Gall.
\newblock Random trees and applications.
\newblock {\em Probab. Surv.}, 2:245--311, 2005.

\bibitem{LG15}
Jean-Fran{\c{c}}ois Le~Gall.
\newblock {\em Bessel Processes, the Brownian Snake and Super-Brownian Motion},
  pages 89--105.
\newblock Springer International Publishing, Cham, 2015.

\bibitem{LTT}
V.~{Lemaire}, M.~{Thieullen}, and N.~{Thomas}.
\newblock {Exact simulation of the jump times of a class of Piecewise
  Deterministic Markov Processes}.
\newblock {\em ArXiv e-prints}, February 2016.

\bibitem{M15}
Florent Malrieu.
\newblock Some simple but challenging {M}arkov processes.
\newblock {\em Ann. Fac. Sci. Toulouse Math. (6)}, 24(4):857--883, 2015.

\bibitem{Banach}
Robert~E. Megginson.
\newblock {\em An introduction to {B}anach space theory}, volume 183 of {\em
  Graduate Texts in Mathematics}.
\newblock Springer-Verlag, New York, 1998.

\bibitem{MT09}
Sean Meyn, Richard~L. Tweedie, and Peter~W. Glynn.
\newblock {\em Markov Chains and Stochastic Stability}.
\newblock Cambridge Mathematical Library. Cambridge University Press, 2
  edition, 2009.

\bibitem{MT93II}
Sean~P. Meyn and R.~L. Tweedie.
\newblock Stability of markovian processes ii: continuous-time processes and
  sampled chains.
\newblock {\em Advances in Applied Probability}, 25(3):487–517, 1993.

\bibitem{MT93III}
Sean~P. Meyn and R.~L. Tweedie.
\newblock Stability of markovian processes iii: Foster-lyapunov criteria for
  continuous-time processes.
\newblock {\em Advances in Applied Probability}, 25(3):518--548, 1993.

\bibitem{pazy}
A.~Pazy.
\newblock {\em Semigroups of linear operators and applications to partial
  differential equations}, volume~44 of {\em Applied Mathematical Sciences}.
\newblock Springer-Verlag, New York, 1983.

\bibitem{PY99}
Jim Pitman and Marc Yor.
\newblock The law of the maximum of a bessel bridge.
\newblock {\em Electron. J. Probab.}, 4:35 pp., 1999.

\bibitem{RY}
D.~Revuz and M.~Yor.
\newblock {\em Continuous Martingales and Brownian Motion}.
\newblock Grundlehren der mathematischen Wissenschaften. Springer Berlin
  Heidelberg, 2013.

\bibitem{R11}
M.~Richard.
\newblock Limit theorems for supercritical age-dependent branching processes
  with neutral immigration.
\newblock {\em Advances in Applied Probability}, 43(1):276–300, 2011.

\bibitem{Ricthese}
Mathieu Richard.
\newblock {\em {Arbres, Processus de branchement non Markoviens et Processus de
  L{\'e}vy}}.
\newblock Theses, {Universit{\'e} Pierre et Marie Curie - Paris VI}, December
  2011.

\bibitem{RW00}
LCG Rogers and David Williams.
\newblock Diffusions, markov processes and martingales: It{\^o} calculus ,
  volume 2 of cambridge mathematical library, 2000.

\bibitem{SS15}
E.~{Schertzer} and F.~{Simatos}.
\newblock {Height and contour processes of Crump-Mode-Jagers forests (I):
  general distribution and scaling limits in the case of short edges}.
\newblock {\em ArXiv e-prints}, June 2015.

\end{thebibliography}
\end{document}